\NeedsTeXFormat{LaTeX2e}
\documentclass[a4paper,fleqn]{article}
\pdfoutput=1

\usepackage[intlimits]{amsmath}
\usepackage{amssymb}
\usepackage[amsmath,thref]{ntheorem}
\usepackage{calrsfs}
\usepackage[T1]{fontenc}
\usepackage{pb-diagram,lamsarrow,pb-lams}
\usepackage[inner=3cm,outer=3cm,top=3cm,bottom=3cm]{geometry}
\usepackage[T1]{fontenc}


\numberwithin{equation}{section}

\theoremstyle{change}
\setlength{\theorempreskipamount}{0.6\baselineskip}
\setlength{\theorempostskipamount}{0.4\baselineskip}
\theoremseparator{.\hspace{3pt}}
\theoremindent10pt

\newtheorem{define}{Definition}[section]
\newtheorem{thm}[define]{Theorem}
\newtheorem{prop}[define]{Proposition}
\newtheorem{cor}[define]{Corollary}
\newtheorem{lemma}[define]{Lemma}

\theoremstyle{nonumberplain}
\theoremseparator{\hspace{1pt}:}
\theoremindent0pt
\theorembodyfont{\upshape}
\theoremprework{}
\theorempostwork{}
\setlength{\theorempostskipamount}{1.4\baselineskip}
\newtheorem{proof}{Proof}

\newcommand{\qed}{\vspace{-0.8\baselineskip} \hfill \rule{2.4mm}{2.4mm} }

\theoremstyle{nonumberbreak}
\theorembodyfont{\itshape}
\theoremseparator{\hspace{1pt}:}
\newtheorem{main}{Main Result}

\theoremstyle{plain}
\theoremseparator{\hspace{1pt}:}
\theoremnumbering{Alph}
\theoremheaderfont{\scshape\bfseries}
\newtheorem{theorem}{Theorem}

\newcommand{\D}{\ensuremath{\mathcal{D}}}
\newcommand{\N}{\ensuremath{\mathbb{N}}}
\newcommand{\Z}{\ensuremath{\mathbb{Z}}}
\newcommand{\R}{\ensuremath{\mathbb{R}}}

\DeclareMathOperator{\vol}{vol}
\DeclareMathOperator{\dvol}{dvol}
\DeclareMathOperator{\spec}{spec}
\DeclareMathOperator{\spa}{span}
\DeclareMathOperator{\dom}{dom}
\DeclareMathOperator{\id}{id}
\DeclareMathOperator{\im}{im}
\DeclareMathOperator{\coker}{coker}

\DeclareMathOperator{\dist}{dist}

\newcommand{\inpro}[2]{\langle #1,#2 \rangle}

\begin{document}


\title{\scshape\bfseries An Adiabatic Decomposition of the Hodge Cohomology of Manifolds Fibred over Graphs\thanks{Houston Journal of Mathematics, \textbf{41} (2015), no. 1, 33--58}}
\newcommand{\shorttitle}{Decomposing the Hodge-Theory of Graph Manifolds}
\author{Karsten Fritzsch\thanks{then: Carl-von-Ossietzky Universit\"at Oldenburg; now: Leibniz Universit\"at Hannover} \\ \texttt{k.fritzsch@math.uni-hannover.de}}
\date{}

\maketitle

\vspace{2em}\begin{center}\parbox{12cm}{\begin{abstract}
In this article we use the combinatorial and geometric structure of manifolds with embedded cylinders in order to develop an adiabatic decomposition of the Hodge cohomology of these manifolds. We will on the one hand describe the adiabatic behaviour of spaces of harmonic forms by means of a certain \emph{\v{C}ech-de Rham} complex and on the other hand generalise the \emph{Cappell-Lee-Miller splicing map} to the case of a finite number of edges, thus combining the topological and the analytic viewpoint. In parts, this work is a generalisation of works of Cappell, Lee and Miller in which a single-edged graph is considered, but it is more specific since only the Gauss-Bonnet operator is studied.
\end{abstract}}\end{center}

\thispagestyle{empty}

\vspace{2em}

\section{Introduction}

Let $(X,g)$ be a smooth, oriented and closed Riemannian manifold and suppose there is a finite number of codimension $1$ submanifolds $Y_e$ so that $g$ is of product type near each $Y_e$. Then, the location of the submanifolds $Y_e$ can be used to define a finite graph $\mathcal{G}=(V,E)$. Let $F=\Lambda(T^*X)$ be the exterior algebra of $X$ and $D = d + \delta$ be the Gauss-Bonnet operator on the $\R$-module $C^\infty(F)$ of smooth differential forms. The local product structure of $g$ near the submanifolds $Y_e$ admits inserting cylinders $Y_e \times [-r,r]$ at $Y_e$, thus stretching the manifold $X$ to a prolonged version $X(r)$, in such a way that we may canonically extend the Riemannian metric, as well as the bundles and the operators.

In the course of this article we will partially answer the following questions:
\begin{quote}\emph{In which way does the space $\mathcal{H}(r)=\ker D(r)$ of harmonic forms on $X(r)$ behave for increasing $r$ and is there -- in some sense yet to be defined -- a limit as $r \rightarrow \infty$? How does the location and the geometry of the cylinders enter the picture?}\end{quote}

The main tool of study on the analytic side is the \emph{Cappell-Lee-Miller splicing map} (cf. \cite{clm1}), gluing together particular harmonic forms on the disconnected manifold $X(\infty)$ and yielding forms on $X(r)$; on the topological side there is the \emph{Mayer-Vietoris principle}, applied to a cover subordinate to the graph $\mathcal{G}$. We will actually see that there is a close correspondence between these seemingly different approaches.

This is a first -- and vague -- version of the main result which, for a single-edged graph, was previously shown by Cappell, Lee and Miller \cite{clm1}:
\begin{main}[vague]\label{main.vague}
 \begin{enumerate}
  \item The Gauss-Bonnet operator has no non-zero small eigenvalues, i.e. eigenvalues $\mu \in o(\frac{1}{r})$.
  \item The splicing construction yields forms which are exponentially close to harmonic forms. Every harmonic form can be approximated in this way.
  \item The topological representation of the cohomology of $X(r)$ by means of a \v{C}ech-de Rham complex for the graph $\mathcal{G}$ gives the asymptotics of the Hodge cohomology of $X(r)$ as $r \rightarrow \infty$.
 \end{enumerate}
\end{main}

This work is influenced by two ancestral ideas from the analysis on singular spaces: Stretching a single embedded cylinder and linking the adiabatic behaviour of the Hodge cohomology to exact sequences in de Rham cohomology (as did Cappell, Lee and Miller \cite{clm1}) or spectral sequences (compare the article of Mazzeo and Melrose \cite{mm90}), and considering multiple cylinders but restricting the theory to functions (as in Grieser \cite{grieser08}). The first of the above questions has been studied for a fibration of a compact manifold by Mazzeo and Melrose \cite{mm90}, who developed a very detailed analytic theory. (In the case of a one edge graph and an asymptotically cylindrical structure, Hassell, Mazzeo and Melrose \cite{hmm95} give detailed asymptotics for the eigenvalues -- and other objects -- as $r$ tends to $\infty$.) For the Laplacian on functions, the second question was treated extensively by Grieser \cite{grieser08}. Moreover, one may try to construct harmonic forms by means of a splicing construction, which involves combining $L^2$ harmonic forms on $X(\infty)$ with harmonic forms defined on the cross-sections of the cylinders. This approach has been pursued by Cappell, Lee and Miller \cite{clm1}, for instance.

We basically follow the lines of \cite{clm1}, but treat the topological aspect -- i.e. the Mayer-Vietoris sequences -- more extensively: Considering the singular space $\mathcal{G}$ as the base involves using its simplicial cohomology in order to account for its singular structure. Moreover, the final distance argument leading to surjectivity of projected splicing will be carried out explicitly, in contrast to \cite{clm1}. (Nicolaescu \cite{nicolaescu02} uses this argument as well, but in a more abstract situation/language.)

\bigskip The first section of this article will see the exact definition of manifolds fibred over graphs as well as decompositions of differential forms and of the Gauss-Bonnet operator with respect to cylindrical metrics. Following this, suitable boundary conditions will be given and after introducing extended $L^2$ forms and their limiting values in Section \ref{sec:extendeds}, some well-known results on the Gauss-Bonnet operator, its kernel and the spaces of limiting values will be collected.

Then, in Section \ref{sec:gen.mv} the conditions which a set of harmonic forms on the cover $X^0$ needs to satisfy in order to be considered for splicing are given; the specific definition of the matching conditions is motivated by a morphism occurring in a generalised Mayer-Vietoris sequence. In Section \ref{sec:match.ext} this connection between harmonic forms and matching sets on one side and the generalised Mayer-Vietoris sequence on the other side will be studied. A theorem from Atiyah, Patodi and Singer \cite{aps1} along with related results will serve as a starting point.

Finally, in the fifth section, we introduce the Cappell-Lee-Miller splicing map and at this point it is reasonable to reformulate the main result in an exact fashion (as {\scshape\bfseries Theorem A} in Section \ref{sec:main}). Some preliminary work concerning the splicing map -- convergence results (see Section \ref{sec:convergence}) established earlier by Cappell, Lee and Miller \cite{clm1} -- will follow. This enables us to use a distance argument to show that splicing defines an isomorphism in Section \ref{sec:surjective}.

\bigskip\textbf{Acknowledgements.} The author would like to thank Daniel Grieser for suggesting this problem, he is deeply grateful for the many fruitful discussions and comments.

\section{The Gauss-Bonnet Operator on Fibred Manifolds}\label{sec:setting}

To begin with, we introduce \textbf{manifolds fibred over a graph}, in order to provide the setting in which the main part takes place. This or similar settings have been used frequently to decompose different objects into contributions from different parts of the manifold. (For instance cohomologies, eigenspaces or $\zeta$-determinants: examples would be \cite{aps1},\cite{grieser08}, \cite{pw06} and many more.) The setup goes along the lines of \cite{grieser08}.

\bigskip\textbf{Fibred Manifolds.} Let $(X,g)$ be a smooth, closed and oriented Riemannian manifold of dimension $n>1$ and $(Y_e,g_e)$, $e \in E$, be a finite set of disjoint and closed submanifolds of codimension $1$ endowed with the induced metric. Moreover, we suppose that $X$ is of \textbf{product type} near each $Y_e$, i.e. there exist isometries
  \[  \varphi_e : Y_e\times(-1,1) \longrightarrow X,  \]
where the domain of $\varphi_e$ is equipped with the product metric $g_e+dt^2$. Let the $Y_e$ carry the orientation induced by those of $X$, $(-1,1)$ via $\varphi_e$.

Let $X_v$, $v \in V$, denote the connected components of $X\setminus\left(\bigcup Y_e\right)$ and $\overline{X_v}$ their closures. A pair $(v,e)$ is called a \textbf{half-edge}, if $Y_e\subset\overline{X_v}$ and in this case $v$ is said to be adjacent to $e$: $v \sim e$. That is, an edge $e$ connects two vertices $v,v^\prime$ if and only if
   \begin{equation*}
      Y_e\subset\left(\overline{X_v}\cap\overline{X_{v^\prime}}\right).
   \end{equation*}
We will often call the connected components $X_v$ \textbf{vertex manifolds} and the submanifolds $Y_e$ \textbf{cross-sections}. In this way, a finite graph $\mathcal{G}=(V,E)$ is defined and the isometries $\varphi_e$ determine an orientation of $\mathcal{G}$. Let $e=(v,v^\prime)$ denote the oriented edge running from $v$ to $v^\prime$ and define
  \begin{equation}\label{bullet}
     \bullet_{(v,e)} = \left\{\begin{array}{cl} 1 &\text{ for } e=(v,v^\prime) \\
                                               -1 &\text{ for } e=(v^\prime,v) \,,
                              \end{array}\right.
  \end{equation}
for any half-edge $(v,e)$. Since $X_v$ is of product type near each adjacent $Y_e$, for $r \ge 0$ it is possible to use the isometries $\varphi_e$ to glue cylinders $Z_{(v,e)}(r) = Y_e \times [0,-\bullet_{(v,e)}r]$ to the corresponding subset of the boundary of $\overline{X_v}$, identifying $Y_e \subset \overline{X_v}$ with $Y_e \times \{-\bullet_{(v,e)}r\}$. (An interval $[a,b]$ with $a>b$ is understood to be the interval $[b,a]$.) Thus, we obtain \textbf{prolonged vertex manifolds} $X_v(r)$ and identifying parts of the boundaries of the $X_v(r)$ according to the graph structure yields the \textbf{prolonged manifold} $X(r)$. Note, that the embedded cylinders in $X(r)$ correspond to $Z_e(r) = Y_e \times [-r,r]$.

If instead of finite cylinders we use $Y_e \times \R_{\bullet_{(v,e)}}$ in the first step, this time identifying $Y_e \subset \overline{X_v}$ with $Y_e \times \{0\}$, the same process yields the \textbf{stars} $X_v(\infty)$ of the vertex manifolds. We use the following notations:
  \begin{equation*}\begin{array}{cclcccl}
    Y_v         &:=  &\displaystyle\bigcup_{e\sim v}  Y_e  &,
   &Y           &:=  &\displaystyle\bigcup_{e\in E}   Y_e \\[1.2em]
    Z_v(r)      &:=  &\displaystyle\bigcup_{e\sim v}  Z_{(v,e)}(r)  &,
   &Z(r)        &:=  &\displaystyle\bigcup_{e\in E}   Z_e(r) \\[1.2em]
    Z_v(\infty) &:=  &\displaystyle\bigcup_{e\sim v}  Z_{(v,e)}(\infty) &,
   &Z(\infty)   &:=  &\displaystyle\bigcup_{e\in E}   Z_e(\infty) \;,
  \end{array}\end{equation*}
Later on, we will sometimes write $Z^0(s)$ instead of $Z(s)$ for $s\in[0,\infty]$. This is closer to the notation of section \ref{sec:match.ext}.

Furthermore, the product type structure makes it possible to canonically extend the Riemannian metric $g$ to the prolonged (vertex) manifolds and cylinders. This metric will be assumed as given for the rest of this article.

\bigskip\textbf{The Gauss-Bonnet Operator.} $F$ will denote the exterior algebra $\Lambda T^*X$ of $X$; similarly, we denote the corresponding bundles over $X(r)$, $X_v(r)$ etc. by $F(r)$, $F_v(r)$ etc., whereas $F_e = \Lambda T^*Y_e$. $C^\infty(F)$ will denote the space of smooth sections of $F$ -- that is, smooth (differential) forms on $X$. Moreover, the Riemannian metrics on the manifolds induce Riemannian metrics on the bundles. The corresponding inner products will always be denoted by $\inpro{\cdot}{\cdot}$, with suitable subscripts if the domain is not clear from context. The inner products allow for defining spaces of square-integrable sections, e.g. $L^2(F)$, and Sobolev spaces of sections, e.g. $H^k(F)$. Defining
  \[ \widehat{F}_e = \Lambda T^*Y_e \oplus \Lambda T^*Y_e \;, \]
and $\widehat{F}_v = \bigoplus_{e \sim v} \widehat{F}_e$, $\widehat{F} = \bigoplus_{e \in E} \widehat{F}_e$, it is plain to see that there is an isometric isomorphism of vector bundles
  \begin{equation*}
     F\big|_{Z_e(r)} \cong C^\infty\big([-r,r],\widehat{F}_e)
     \quad,\quad u=u^0 + u^1\wedge dt \longmapsto \begin{pmatrix} u^0 \\ u^1 \end{pmatrix} \;,
  \end{equation*}
where $t$ denotes the longitudinal coordinate of the cylinder and $u^j : [-r,r] \rightarrow C^\infty(F_e)$ is smooth.

Let $d : C^\infty\big(F(r)\big) \rightarrow C^\infty\big(F(r)\big)$ be the exterior derivative and $\delta = d^*$ be its formal adjoint (with respect to the $L^2$ inner product). Since $X(r)$ is closed, the \textbf{Gauss-Bonnet operator}
  \[ D(r) = d + \delta \;:\; C^\infty\big(F(r)\big) \longrightarrow C^\infty\big(F(r)\big) \]
is a linear, essentially self-adjoint, elliptic first-order partial differential operator which extends to a closed, self-adjoint operator on the first Sobolev space
  \[ \D(r) \;:\; H^1\big(F(r)\big) \longrightarrow L^2\big(F(r)\big) \;. \]
Its symbol is given in terms of left exterior multiplication $\wedge_\xi u = \xi \wedge u$ and its adjoint, the interior product $\iota_\xi u = u(v,\dotsc)$, where $v$ is the vector field dual to $\xi$ with respect to the chosen coordinates: $p_{\D(r)}(x,\xi) = i ( \wedge_\xi - \iota_\xi )$, where $i$ denotes the imaginary unit.

In addition, $L^2\big(F(r)\big)$ possesses an orthonormal basis of eigenforms of $\D(r)$, which are smooth and the spectrum $\spec\D(r)$ of $\D(r)$ is a discrete subset of $\R$ with sole accumulation points $\pm \infty$ and which consists of eigenvalues of finite multiplicity only.

\bigskip On $X_v(\infty)$, we define the exterior differential $d$ with domain $\dom d= C^\infty_0\big(X_v(\infty)\big)$, and with respect to the $L^2$ inner product the codifferential $\delta$ and hence the Gauss-Bonnet operator $D_v(\infty)$ can be defined. Considered with domain $\dom D_v(\infty) = C_0^\infty\big(F_v(\infty)\big)$, Gaffney \cite{gaffney51} showed that $D_v(\infty)$ is essentially self-adjoint and that its closure $\D_v(\infty)$ has domain $H^1\big(F_v(\infty)\big)$.

\bigskip The cylindrical structure of the manifold $X$ near the cross-sections $Y_e$ of course has its consequences for the operator $D$ as well. In the following, the existence of an isometric isomorphism from a suitable neighbourhood of $Y_e$ to a cylinder $Y_e \times I$, $I \subset \R$ is used implicitly.

\begin{prop}\label{D.on.cylinders}Let $D$ be the Gauss-Bonnet operator over a cylinder $Y_e \times I$ and $\pi$ be projection onto the first factor of $Y_e \times I$. There is an isometric bundle isomorphism on $\widehat{F}_e$ inducing
  \[ \sigma \;:\; C^\infty(\widehat{F}_e) \longrightarrow C^\infty(\widehat{F}_e) \]
and a linear, elliptic, symmetric partial differential operator
  \[ \widehat{D}_e \;:\; C^\infty(\widehat{F}_e) \longrightarrow C^\infty(\widehat{F}_e) \]
so that
  \begin{equation}\label{D.decomp}
    D = (\pi^*\sigma)\big(\partial_t - \pi^*\widehat{D}_e\big) \;,
  \end{equation}
where $t$ denotes the coordinate of $I$.
\end{prop}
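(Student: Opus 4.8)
The plan is to carry out the whole computation inside the isometric bundle isomorphism $F|_{Y_e\times I}\cong C^\infty\bigl(I,\widehat F_e\bigr)$ described above, so that a form $u=u^0+u^1\wedge dt$ corresponds to the pair $\binom{u^0}{u^1}$ of $t$-dependent sections of $F_e=\Lambda T^*Y_e$, and to express $d$, $\delta$ and hence $D$ as $2\times2$ matrices of operators in these coordinates. Since the metric is of product type, $d=d_{Y_e}+dt\wedge\partial_t$ on the cylinder; commuting $dt$ past forms on $Y_e$ produces the (fibrewise isometric) parity involution $\alpha$, acting by $(-1)^k$ on $k$-forms, and a short computation gives $d=\bigl(\begin{smallmatrix} d_{Y_e} & 0 \\ \alpha\partial_t & d_{Y_e}\end{smallmatrix}\bigr)$, with $d_{Y_e}$ applied at fixed $t$. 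Computing the formal adjoint $\delta=d^{*}$ — either directly (each $d_{Y_e}$ becomes $\delta_{Y_e}$, and $\alpha\partial_t$ becomes $-\alpha\partial_t$, the sign coming from an integration by parts in $t$) or via the Hodge star on the product $Y_e\times I$ — then gives
\[
  \delta=\begin{pmatrix}\delta_{Y_e} & -\alpha\partial_t\\ 0 & \delta_{Y_e}\end{pmatrix},
  \qquad
  D=d+\delta=\begin{pmatrix}0 & -\alpha\\ \alpha & 0\end{pmatrix}\partial_t
           +\begin{pmatrix}D_{Y_e} & 0\\ 0 & D_{Y_e}\end{pmatrix},
\]
where $D_{Y_e}:=d_{Y_e}+\delta_{Y_e}$ is the Gauss--Bonnet operator of the closed manifold $Y_e$.

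Next I would set $\sigma:=\bigl(\begin{smallmatrix} 0 & -\alpha\\ \alpha & 0\end{smallmatrix}\bigr)$, a $t$-independent bundle endomorphism of $\widehat F_e$, together with $B:=\operatorname{diag}(D_{Y_e},D_{Y_e})$. One checks $\sigma^{*}=-\sigma$ and $\sigma^{2}=-\id$ from $\alpha^{2}=\id$, $\alpha^{*}=\alpha$; equivalently $\sigma$ is the endomorphism $\wedge_{dt}-\iota_{dt}$ restricted to $\widehat F_e$, consistently with the normal symbol $p_D(x,dt)=i(\wedge_{dt}-\iota_{dt})$ recorded above, and its being an isometry with $\sigma^{2}=-\id$ is the Clifford relation $(\wedge_\xi-\iota_\xi)^2=-|\xi|^2\id$. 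Thus $\sigma$ is an isometric bundle isomorphism with $\sigma^{-1}=\sigma^{*}=-\sigma$ — the $\sigma$ of the statement, pulled back to the cylinder by $\pi$ (legitimate precisely because $\sigma$ is independent of $t$). Factoring it out,
\[
  D=\sigma\bigl(\partial_t+\sigma^{-1}B\bigr)=(\pi^{*}\sigma)\bigl(\partial_t-\pi^{*}\widehat D_e\bigr),
  \qquad
  \widehat D_e:=\sigma B=\begin{pmatrix}0 & -\alpha D_{Y_e}\\ \alpha D_{Y_e} & 0\end{pmatrix},
\]
which is the asserted decomposition \eqref{D.decomp}.

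It then remains to verify that $\widehat D_e$ is a linear, elliptic, symmetric first-order operator; linearity and order are clear. Ellipticity is immediate: for $\eta\neq0$ the principal symbol $p_{\widehat D_e}(y,\eta)$ has vanishing diagonal and off-diagonal blocks $\mp\,\alpha\,p_{D_{Y_e}}(y,\eta)$, each invertible because $D_{Y_e}$ is elliptic and $\alpha$ is invertible, so the block matrix is invertible. For symmetry one computes $\widehat D_e^{\,*}=\bigl(\begin{smallmatrix} 0 & D_{Y_e}\alpha\\ -D_{Y_e}\alpha & 0\end{smallmatrix}\bigr)$ (using $D_{Y_e}^{*}=D_{Y_e}$ on the closed $Y_e$) and invokes the elementary fact that $d_{Y_e}$ and $\delta_{Y_e}$ shift degree by $+1$ and $-1$ and hence anticommute with $\alpha$, i.e.\ $D_{Y_e}\alpha=-\alpha D_{Y_e}$; this turns $\widehat D_e^{\,*}$ into $\widehat D_e$. (More conceptually: on $L^2$ of the cylinder, self-adjointness of $D$ forces any decomposition $D=\sigma\partial_t+B$ with $\sigma$, $B$ independent of $t$ to have $\sigma$ skew-adjoint and $B$ symmetric, whence $\widehat D_e=-\sigma^{-1}B$ is symmetric.)

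I expect the only real obstacle to be the bookkeeping of the graded signs — the $(-1)^{\deg}$ factors coming from graded-commutativity of $\wedge$ and from the Hodge-star convention for $\delta$. The point of the computation is that these collapse into the single parity involution $\alpha$, after which the two algebraic identities $\sigma^{2}=-\id$ and $\alpha D_{Y_e}=-D_{Y_e}\alpha$ do all the work. There are no analytic subtleties here, since \eqref{D.decomp} is a pointwise identity of differential operators on the cylinder and no claim about domains or self-adjoint extensions is being made at this stage.
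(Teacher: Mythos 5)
Your proof is correct and takes essentially the same route as the paper: it too computes $d$, $\ast$, $\delta$ on the product cylinder in terms of the cross-section operators, defines the very same $\sigma$ (your $\bigl(\begin{smallmatrix}0&-\alpha\\ \alpha&0\end{smallmatrix}\bigr)$ is exactly its $\bigl(\begin{smallmatrix}0&(-1)^{\deg_Y+1}\\ (-1)^{\deg_Y}&0\end{smallmatrix}\bigr)$) and $\widehat{D}_e=\sigma\operatorname{diag}(D_e,D_e)$, and obtains \eqref{D.decomp} from $\sigma^2=-\id$, $\sigma^*=-\sigma$ and the anticommutation of $\sigma$ with $\operatorname{diag}(D_e,D_e)$, which the paper records as Lemma \ref{props.of.sigma.D.hat} via ``direct computations''. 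Your explicit symmetry and ellipticity checks merely spell out those deferred computations.
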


\begin{proof} Recall that there is a unique isometric isomorphism $\ast \;:\; C^\infty(F) \longrightarrow C^\infty(F)$ so that $\inpro{\ast u}{v} = \inpro{u\wedge v}{\dvol}$. This isometry is called the \textbf{Hodge-$\ast$ operator} and when acting on $k$-forms defined on a closed, $n$-dimensional manifold, $\ast$ satisfies the identities
  \[ \delta = (-1)^{n(k+1)+1} * d * \quad\text{and}\quad *^2 = (-1)^{k(n-k)} \;. \]
Using these relations and local coordinate expressions we may examine the relation between the operators $d$, $\delta$ and $\ast$ and their counterparts defined on the cross-sections $Y_e$:
  \begin{equation*}\begin{array}{rclcrcl}
    du     &= &d_e u + (-1)^k(\partial_t u)\wedge dt &, &d (u \wedge dt)      &= &d_e u \wedge dt \\[3pt]
    \ast u   &= &(\ast_e u)\wedge dt                   &, &\ast(u \wedge dt)    &= &(-1)^{n-k-1}\ast_e u \\[3pt]
    \delta u &= &\delta_e u                            &, &\delta( u\wedge dt ) &= &(-1)^{k+1}\partial_t u
                                                                                   + \delta_e u \wedge dt ,
  \end{array}\end{equation*}
where $u \in C^\infty(I) \otimes C^\infty(F_e)$ is of degree $k$, a subscript $e$ for operators denotes the corresponding operator on $C^\infty(F_e)$ and $\dim Y = n-1$. Then, with $(-1)^{\deg_Y}u =(-1)^k u$ for a homogeneous $k$-form $u \in L^2(F_e)$ and extending this definition by linearity to all of $L^2(F_e)$, we define
  \begin{equation}\label{D.hat.sigma.def}\begin{split}
    \sigma &:= \begin{pmatrix}0 & (-1)^{\deg_Y+1} \\ (-1)^{\deg_Y} & 0 \end{pmatrix} \\[0.3em]
    \widehat{D}_e &:=\sigma \begin{pmatrix} D_e & 0 \\ 0 & D_e \end{pmatrix}\;.
  \end{split}\end{equation}
Now, it is easy to see that \eqref{D.decomp} holds. With regard to the remaining claims, it is convenient to collect some properties of $\sigma$ and $\widehat{D}_e$.
\qed\end{proof}

\begin{lemma}\label{props.of.sigma.D.hat} Let $\sigma$ be defined as in \eqref{D.hat.sigma.def} and $(-1)^{\deg} u = (-1)^{\deg_Y}u^0 + \big((-1)^{\deg_Y + 1}u^1\big) \wedge dt$. Then,
 \begin{enumerate}
  \item \quad $(-1)^{\deg+1} \begin{pmatrix}0&1\\1&0\end{pmatrix} = \sigma
           = \begin{pmatrix}0&1\\1&0\end{pmatrix} (-1)^{\deg} $
  \item \quad $\sigma ^2 = -\id \quad \text{ and } \quad \sigma^* = -\sigma$
  \item \quad $\begin{pmatrix}D_e&0\\0&D_e\end{pmatrix} \sigma
          = - \sigma \begin{pmatrix}D_e&0\\0&D_e\end{pmatrix} \;,$
 \end{enumerate}
$\widehat{D}_e$ as in \eqref{D.hat.sigma.def} is a linear, symmetric, elliptic first-order differential operator and when considered as an unbounded operator on $L^2(\widehat{F}_e)$, it extends to a closed operator $\widehat{\D}_e$ with domain $H^1(\widehat{F}_e)$. Moreover,
 \begin{enumerate}\setcounter{enumi}{3}
  \item $L^2(\widehat{F}_e)$ has an orthonormal basis $\{\phi_\lambda\}$ of smooth eigenforms of
        $\widehat{\D}_e$ with eigenvalues $\lambda\in\R$, $\widehat{D}_e\phi_\lambda=\lambda\phi_\lambda$;
        the spectrum of $\widehat{D}_e$ is a discrete subset of $\R$ which consists of eigenvalues of finite multiplicity only and whose sole accumulation points are $\pm\infty$.
  \item The eigenvalues of $\widehat{D}_e$ can be indexed by $k \in \Z\setminus\{0\}$ such that $\dotsc \le \lambda_{k-1} \le \lambda_k \le \dotsc$, counted with multiplicities and $-\lambda_k = \lambda_{-k}$. In particular, $\ker\widehat{D}_e$ is even-dimensional.
  \item As $k \rightarrow \infty$, the eigenvalues of $\widehat{D}_e$ satisfy
           \[ |\lambda_{\pm k}| \sim
                  \left(\frac{\vol(Y_e)\vol(\mathbb{S}^{n-2})}{n-1}\right)^{-\frac{1}{n-1}} k^{\frac{1}{n-1}} \;, \]
        with $\mathbb{S}^{n-2}$ denoting the unit sphere in $\R^{n-1}$. (This means that the quotient of
        both sides tends to $1$ as $k \rightarrow \infty$.)
  \end{enumerate}
\end{lemma}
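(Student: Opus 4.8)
The plan is to dispatch the three displayed identities by short computations with the degree operators, to read the operator‑theoretic properties of $\widehat D_e$ off from them, and then to obtain the spectral statements iv)--vi) from standard elliptic theory on the closed manifold $Y_e$ together with one algebraic symmetry. Under the identification $u=u^0+u^1\wedge dt\leftrightarrow(u^0,u^1)^{\mathrm T}$ of the excerpt, write $\tau=\left(\begin{smallmatrix}0&1\\1&0\end{smallmatrix}\right)$ for the off‑diagonal swap and $P=\left(\begin{smallmatrix}D_e&0\\0&D_e\end{smallmatrix}\right)$, so that $\widehat D_e=\sigma P$; then $(-1)^{\deg}=\left(\begin{smallmatrix}(-1)^{\deg_Y}&0\\0&-(-1)^{\deg_Y}\end{smallmatrix}\right)$ — the sign flip in the lower block being the one carried by the $dt$‑factor — and $(-1)^{\deg+1}$ is its negative. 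Item i) is then the two matrix products $(-1)^{\deg+1}\tau$ and $\tau(-1)^{\deg}$, each of which equals $\sigma$ on inspection. Item ii) follows at once: $\sigma^2=(-1)^{\deg+1}\tau^2(-1)^{\deg}=(-1)^{\deg+1}(-1)^{\deg}=-\id$, and since $\sigma$ is induced by an isometric bundle isomorphism (Proposition \ref{D.on.cylinders}) we have $\sigma^*\sigma=\id$, hence $\sigma^*=\sigma^{-1}=-\sigma$. For item iii), $P$ commutes with the scalar block matrix $\tau$ and anticommutes with $(-1)^{\deg}$ (because $D_e=d_e+\delta_e$ is odd and so anticommutes with $(-1)^{\deg_Y}$), so by i) one gets $P\sigma=\tau P(-1)^{\deg}=-\tau(-1)^{\deg}P=-\sigma P$.

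From i)--iii), $\widehat D_e=\sigma P$ is linear and of first order, elliptic because $\sigma$ is a bundle isomorphism and $D_e$ is elliptic, and symmetric because $\widehat D_e^*=P^*\sigma^*=-P\sigma=\sigma P=\widehat D_e$, using iii), $\sigma^*=-\sigma$ and the formal self‑adjointness of $D_e$ on the closed manifold $Y_e$. Moreover iii) together with $\sigma^2=-\id$ gives $\widehat D_e^{\,2}=\sigma P\sigma P=-\sigma^2P^2=P^2=\left(\begin{smallmatrix}\Delta_e&0\\0&\Delta_e\end{smallmatrix}\right)$, the (non‑negative, second‑order, elliptic) Hodge Laplacian on $\widehat F_e$. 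By the basic elliptic (Gårding) estimate for $\widehat D_e$ its closure $\widehat{\D}_e$ therefore has domain exactly $H^1(\widehat F_e)$, and, being symmetric and essentially self‑adjoint on a closed manifold, it is self‑adjoint. Item iv) is then the standard spectral picture of such an operator: the resolvent is compact (Rellich), so the spectrum is discrete and real with eigenvalues of finite multiplicity and only $\pm\infty$ as accumulation points, while elliptic regularity makes the eigenforms smooth; they form an orthonormal basis of $L^2(\widehat F_e)$.

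For item v) the key point is the anticommutation $\sigma\widehat D_e=-\widehat D_e\sigma$, immediate from iii): $\sigma\widehat D_e=\sigma^2P=-P$ while $\widehat D_e\sigma=\sigma P\sigma=-\sigma^2P=P$. Thus $\sigma$ maps the $\lambda$‑eigenspace of $\widehat D_e$ isomorphically onto its $(-\lambda)$‑eigenspace, so these have equal (finite) dimension; listing the nonzero eigenvalues increasingly and indexing them by $k\in\Z\setminus\{0\}$ with $\lambda_{-k}:=-\lambda_k$ is therefore consistent, and on $\ker\widehat D_e$ the restriction of $\sigma$ is an endomorphism with square $-\id$, i.e.\ a complex structure, whence $\dim_\R\ker\widehat D_e$ is even. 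For item vi), I invoke Weyl's law for $\widehat D_e^{\,2}=\left(\begin{smallmatrix}\Delta_e&0\\0&\Delta_e\end{smallmatrix}\right)$: on the closed $(n-1)$‑manifold $Y_e$ the Hodge Laplacian on $\Lambda T^*Y_e$ (fibre rank $2^{n-1}$) has counting function asymptotic to $2^{n-1}(2\pi)^{-(n-1)}\vol(B^{n-1})\vol(Y_e)\,s^{(n-1)/2}$, so that of $\widehat D_e^{\,2}$ is twice as large; by the sign symmetry from v) the number of $k\ge1$ with $\lambda_k\le s^{1/2}$ is asymptotically half of it, and inverting this relation and simplifying the Weyl coefficient via $\vol(\mathbb{S}^{n-2})=(n-1)\vol(B^{n-1})$ yields the stated leading behaviour of $|\lambda_{\pm k}|$.

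The only genuinely delicate points here are the bookkeeping ones: keeping the two parity operators $(-1)^{\deg}$ and $(-1)^{\deg_Y}$ — and the stray sign carried by the $dt$‑factor — straight throughout i)--iii), and, in vi), correctly accounting both for the rank of $\Lambda T^*Y_e$ and for the two‑to‑one folding of $\spec\widehat D_e^{\,2}$ onto $\{\pm\lambda_k\}$ when passing from the Weyl coefficient of $\Delta_e$ to the asymptotics of $\lambda_k$. Conceptually, i)--v) amount to linear algebra over the cross‑section plus boilerplate elliptic theory, and vi) is no more than Weyl's law.
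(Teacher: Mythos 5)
Your items i)--iii), the symmetry/ellipticity/closure statements, and items iv)--v) follow exactly the paper's route: the paper's proof of i)--iii) is ``direct computations'' of the kind you spell out, iv) is quoted from Shubin, and v) rests on the same anticommutation $\sigma\widehat{D}_e=-\widehat{D}_e\sigma$; for even-dimensionality the paper notes $\ker\widehat{D}_e\cong(\ker D_e)^2$ while you use that $\sigma$ restricts to a complex structure on the kernel --- an equally valid one-liner. One cosmetic caveat: you cite \thref{D.on.cylinders} for the isometry of $\sigma$, but that proposition's proof is precisely what invokes this lemma; the isometry should instead be read off directly from \eqref{D.hat.sigma.def} (a swap composed with signs on homogeneous components), which costs one line and removes the circularity.

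The step that does not close as written is the last sentence of your treatment of vi). Your Weyl coefficient for $\Delta_e$ on $\Lambda T^*Y_e$, namely $2^{n-1}(2\pi)^{-(n-1)}\vol(B^{n-1})\vol(Y_e)$, is the standard one; doubling for $\widehat{D}_e^{\,2}$, halving by the sign symmetry and inverting then gives $|\lambda_{\pm k}|\sim \pi\big(\tfrac{\vol(Y_e)\vol(\mathbb{S}^{n-2})}{n-1}\big)^{-1/(n-1)}k^{1/(n-1)}$, i.e.\ the displayed constant multiplied by $\pi$: the powers of $2$ cancel, but $(2\pi)^{-(n-1)}$ leaves a residual $\pi^{-(n-1)}$ inside the bracket, so the claim that ``simplifying the Weyl coefficient yields the stated leading behaviour'' is not literally true and cannot be waved through. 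For comparison, the paper's own proof asserts $\lim_{\lambda\to\infty}\lambda^{-\frac{n-1}{2}}N_{\widehat{\Delta}_e}(\lambda)=2\pi^{\frac{n-1}{2}}\vol(Y_e)/\Gamma(\tfrac{n+1}{2})$, which is internally consistent with the displayed asymptotics but differs from the standard heat-trace constant $2\vol(Y_e)/\big(\pi^{\frac{n-1}{2}}\Gamma(\tfrac{n+1}{2})\big)$ by exactly $\pi^{n-1}$; so the mismatch appears to originate in the paper's constant rather than in your Weyl data. You should therefore either carry out the final simplification explicitly and flag the extra factor of $\pi$, or reconcile the normalisations; note that only the growth rate $|\lambda_{\pm k}|\asymp k^{\frac{1}{n-1}}$ is used later (absolute convergence in \thref{local.convergence}), so the issue is confined to the constant, but as a proof of vi) as stated your argument is incomplete at this point.
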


\begin{proof} Items \emph{i)} through \emph{iii)} follow from direct computations. For item \emph{iv)}, the reader is referred to Shubin \cite[8.2]{shubin01}. Concerning item \emph{v)}, note that $\sigma\widehat{D}_e = -\widehat{D}_e\sigma$ implies that $\sigma\phi_\lambda$ is an eigenform with eigenvalue $-\lambda$, hence $\sigma$ induces an isomorphism between the eigenspaces corresponding to $\lambda$ and $-\lambda$. Moreover, $\ker\widehat{D}_e \cong \big(\ker D_e\big)^2$ shows the even-dimensionality of $\ker\widehat{D}_e$. For item \emph{vi)}, compare Gilkey \cite[3.7]{gilkey04}. Using Gilkey's trace formula for the heat operator $e^{-t\widehat{\Delta}_e}$ of the direct sum of two Hodge-Laplacians $\widehat{\Delta}_e = \widehat{D}_e^2 = \Delta_e \oplus \Delta_e$, one may prove that
 \[ \lim_{\lambda \rightarrow \infty} \lambda^{-\frac{n-1}{2}}N_{\widehat{\Delta}_e}(\lambda)
     = \frac{2\pi^\frac{n-1}{2}\vol(Y_e)}{\Gamma(\frac{n+1}{2})} \;, \]
where $\Gamma$ denotes the Gamma-function and $N_{\widehat{\Delta}_e}(\lambda)$ the number of eigenvalues of $\widehat{\Delta}_e$ less or equal $\lambda$ (counted with multiplicities). This shows
  \[ N_{\widehat{D}_e}^+(\lambda)
     = \frac{\vol(Y_e)\vol(\mathbb{S}^{n-2})}{n-1}\lambda^{n-1} + \hbox{o}(\lambda^{n-1}) \;, \]
with $N_{\widehat{D}_e}^+$ the number of non-negative eigenvalues of $\widehat{D}_e$ less or equal $\lambda$. Since $\widehat{\Delta}_e = D_e^2$, item \emph{vi)} follows directly from these asymptotics.
\qed\end{proof}

\bigskip\textbf{Fredholm Properties.} The kernels of the operators $D_v(r)$ and $D_v(\infty)$ will play a crucial role, hence we need to study the properties of these operators and compare their kernels. Fix an orthonormal basis of $L^2(\widehat{F}_e)$ of eigenforms of $\widehat{D}_e$ as in \thref{props.of.sigma.D.hat}, say $\{\phi^e_k\}$ and suppose that for $0 < |k| \le N_e$, the forms $\phi^e_k$ span $\ker \widehat{D}_e$. Then, the union of these bases forms a basis of $L^2(\widehat{F})$, where
  \[ \spa \big\{\,\phi_{k_e}^e\;\big|\; 0 < |k_e| \le N_e\,,\, e \in E\,\big\} = \ker\widehat{D} \;. \]
Simplifying notation, we denote the joint basis by $\phi_k$, $k \in \Z\setminus 0$, put $\sum_e N_e = N$ and order this basis in such a way that $\lambda_k \le \lambda_l$ if and only if $k \le l$ and
  \[ \spa \big\{\,\phi_k\;\big|\; 0 < |k| \le N\,\big\} = \ker\widehat{D} \;. \]
For the sake of clarity, we will often denote an element of this basis corresponding to the eigenvalue $\lambda$ by $\phi_\lambda$, even though there may be multiple elements corresponding to this eigenvalue.

Now there are two different representations of a form $u$ on the cylindrical parts of the manifold: We may decompose $u$ into its \textbf{transversal} (or \textbf{absolute}) and its \textbf{normal} (or \textbf{relative}) part as
  \begin{equation}\label{normal.form.decomp}
    u = u^0 + u^1\wedge dt
  \end{equation}
or we may expand $u$ in terms of the basis $\{\phi_\lambda\}$ as
  \begin{equation}\label{forms.expand}
    u = \sum_\lambda u_\lambda \pi^*\phi_\lambda \;.
  \end{equation}
Pulling back along the inclusion map, e.g. $\partial Z_{(v,e)}(r) \hookrightarrow X_v(r)$, does not comprise the relative part of a form; hence, by restriction to the boundary we understand evaluation at $t=0$ (note that the coordinate $t$ defines a boundary defining function):
  \[ u\big|_{\partial X_v(r)} = u^0(0) + u^1(0) \wedge dt = \sum_\lambda u_\lambda(0) \phi_\lambda \;. \]
If $P$ is a subspace of $\widehat{F}_v$, let $C^\infty\big(F_v(r);P\big)$ denote the space of forms $u \in C^\infty\big(F_v(r)\big)$ so that
   \[ u\big|_{\partial X_v(r)} \in P \;. \]
Then, for an operator $T$ acting on $C^\infty\big(F_v(r)\big)$, $\ker(T;P)$ will denote the kernel of $T$ with domain $C^\infty\big(F_v(r);P\big)$.

Since we are mainly interested in the behaviour of the kernel of $D_v(r)$ and $D(r)$ as $r \rightarrow \infty$, it is reasonable to assume that the forms satisfy certain growth conditions on the cylindrical parts. Restricting forms to the cylindrical part of $X_v(r)$ for the moment, solving $D_v(r) u = 0$ is equivalent to solving $\partial_t u_\lambda(t) = \lambda u_\lambda(t)$ for all eigenvalues $\lambda$, by \eqref{D.decomp} and \eqref{forms.expand}. Hence, any solution is of the form
  \begin{equation*}
    u(y,t)\big|_{Z_{(v,e)}(r)} = \sum_{\lambda \,\in\, \spec(\widehat{D}_e)} u_{\lambda}(0) \, e^{\lambda t}
                                                                           \, \pi^*\phi_\lambda(y)
  \end{equation*}
This motivates the following definition of boundary conditions:

\begin{define} Let $P_{e,\pm}$ denote the $L^2$ closure of the space spanned by eigenforms of $\widehat{D}_e$ corresponding to positive respectively negative eigenvalues and let $\overline{P}_{e,\pm} = P_{e,\pm} \oplus \ker \widehat{D}_e$. We define spaces of boundary conditions by
  \begin{equation*}\begin{array}{cclcccll}
    P_{(v,e)} &:= &P_{e,\bullet_{(v,e)}} &\text{and}
    &\overline{P}_{(v,e)} &:= &\overline{P}_{e,\bullet_{(v,e)}} &,\\[3pt]
    P_v &:= &\displaystyle\bigoplus_{e\sim v} P_{(v,e)} &\text{and}
    &\overline{P}_v &:= &\displaystyle\bigoplus_{e\sim v}\overline{P}_{(v,e)} &.
  \end{array}\end{equation*}
\end{define}
Note, that the resulting boundary conditions give the \textbf{Atiyah-Patodi-Singer conditions} with respect to $-\widehat{D}$. As is well-known, these boundary conditions define Fredholm operators (cf. \cite[\S 2]{aps1}):

\begin{thm}[Atiyah-Patodi-Singer] The operator
  \begin{equation*}
    D_v(r) : C^\infty\big(F_v(r);P_v\big) \longrightarrow C^\infty\big(F_v(r)\big)
  \end{equation*}
extends to a closed and continuous operator
  \begin{equation*}
    \D_v(r) : H^1\big(F_v(r);P_v\big) \longrightarrow L^2\big(F_v(r)\big) \;.
\end{equation*}
In addition, these operators are Fredholm and their kernels coincide:
  \[\ker\big(\D_v(r);P_v\big)=\ker\big(D_v(r);P_v\big) \]
\end{thm}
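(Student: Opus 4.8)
The plan is to reduce the statement to the classical Atiyah--Patodi--Singer theory by exhibiting the boundary conditions $P_v$ as APS conditions for the operator $-\widehat{D}$, which by Proposition \ref{D.on.cylinders} governs $D_v(r)$ on the cylindrical ends. Recall from \eqref{D.decomp} that on each collar $Z_{(v,e)}(r)$ one has $D_v(r) = (\pi^*\sigma)(\partial_t - \pi^*\widehat{D}_e)$, where $t$ is the inward-pointing longitudinal coordinate on the cylinder when $\bullet_{(v,e)}=1$ (and its negative otherwise); after applying the bundle isometry $\sigma$, which by \thref{props.of.sigma.D.hat}\emph{ii)} is invertible, the operator takes the standard APS normal form $\partial_t - A$ with $A$ the tangential operator $\pm\widehat{D}_e$. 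By construction $P_{(v,e)} = P_{e,\bullet_{(v,e)}}$ is precisely the span of the eigenforms of $\widehat{D}_e$ whose sign matches the orientation $\bullet_{(v,e)}$, so $P_v = \bigoplus_{e\sim v}P_{(v,e)}$ is the nonnegative (or nonpositive) spectral subspace of the tangential operator once signs are tracked, i.e. the APS projection associated to $-\widehat{D}$ on $\partial X_v(r)$. First I would write this identification out carefully, using \thref{props.of.sigma.D.hat}\emph{v)} to see that the spectral decomposition of $\widehat{D}_e$ is symmetric about $0$ so that the APS projection is well-defined, and noting that $\ker\widehat{D}_e$ is excluded from $P_v$ (the \emph{closed} APS condition, without the kernel).

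Next I would invoke the Atiyah--Patodi--Singer theorem \cite[\S 2, 3]{aps1} directly: for a compact manifold with boundary carrying a product collar, a first-order elliptic operator of Dirac type with the spectral boundary condition $P$ associated to the tangential operator is Fredholm, and its closure with domain $H^1(F_v(r);P_v)$ agrees with the minimal closed extension. This gives the Fredholm property and the continuity of $\D_v(r)$ at once; the hypotheses of \cite{aps1} are met because $X_v(r)$ is compact (it is $\overline{X_v}$ with finitely many finite cylinders attached), oriented, and product-type near $\partial X_v(r)$ by the construction of the prolonged vertex manifolds, and $D_v(r)$ is elliptic and (essentially) self-adjoint with the symbol recorded in Section \ref{sec:setting}.

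It remains to prove the equality of kernels $\ker(\D_v(r);P_v) = \ker(D_v(r);P_v)$, i.e. elliptic regularity up to the boundary: any $u \in H^1(F_v(r);P_v)$ with $\D_v(r)u = 0$ is smooth up to $\partial X_v(r)$. Here I would argue in two regions. In the interior and away from the collar, interior elliptic regularity of the Gauss--Bonnet operator already gives $u \in C^\infty$. On the collar $Z_{(v,e)}(r) \cong Y_e \times [0,-\bullet_{(v,e)}r]$, I would separate variables using the expansion \eqref{forms.expand} $u = \sum_\lambda u_\lambda\,\pi^*\phi_\lambda$: the equation $\D_v(r)u=0$ becomes $\partial_t u_\lambda = \lambda u_\lambda$ for each eigenvalue $\lambda$ of $\widehat{D}_e$, so $u_\lambda(t) = u_\lambda(0)e^{\lambda t}$, and the boundary condition $u|_{\partial X_v(r)} \in P_v$ forces $u_\lambda(0)=0$ for all $\lambda$ of the ``wrong'' sign (and for $\lambda=0$, since $\ker\widehat{D}_e \not\subset P_v$). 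The surviving coefficients correspond to $\lambda$ with $\lambda\cdot t \le 0$ along the collar, so each $u_\lambda$ is a bounded, exponentially damped smooth function of $t$ with values in the smooth eigenspace $\spa\{\phi_\lambda\}$, and the series converges in every $C^k$ by the Weyl-type eigenvalue growth $|\lambda_{\pm k}| \sim c\,k^{1/(n-1)}$ of \thref{props.of.sigma.D.hat}\emph{vi)} together with the super-polynomial decay of the $e^{\lambda t}$ factors. Hence $u$ is smooth up to the boundary and lies in $C^\infty(F_v(r);P_v)$, giving the reverse inclusion (the inclusion $C^\infty \subset H^1$ being trivial on the compact manifold $X_v(r)$).

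The main obstacle is the boundary regularity argument rather than the Fredholm statement: one has to be careful that the APS boundary condition, which is a nonlocal pseudodifferential condition, is nonetheless ``elliptic'' in the sense of Lopatinski--Shapiro for $\partial_t - A$, so that $H^1$ solutions are automatically smooth up to the boundary; this is exactly the content of the regularity part of \cite{aps1}, and I would either cite it or reprove it in the present special case via the separation-of-variables computation above, which is self-contained once \thref{props.of.sigma.D.hat} is in hand.
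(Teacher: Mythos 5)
Your overall route coincides with the paper's: the paper gives no proof of this theorem at all -- it observes that $P_v$ is the Atiyah--Patodi--Singer condition with respect to $-\widehat{D}$ and cites \cite[\S 2]{aps1} -- and your identification of $P_v$ with the spectral boundary condition of the tangential operator via the collar decomposition $D=(\pi^*\sigma)(\partial_t-\pi^*\widehat{D}_e)$, followed by an appeal to the APS Fredholm and regularity theory, is exactly that reduction, spelled out in more detail.

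One caution about the step you present as optionally self-contained. The separation-of-variables argument does show that for $u\in H^1$ with $\D_v(r)u=0$ the collar expansion retains only the modes $u_\lambda(0)e^{\lambda t}$ allowed by $P_v$, and interior elliptic regularity gives smoothness away from the boundary. But as written it does not give smoothness up to the boundary: at $t=0$ the exponential factors equal $1$, so there is no ``super-polynomial decay of the $e^{\lambda t}$ factors'' to play against the Weyl growth of \thref{props.of.sigma.D.hat}, and the only a priori control on the coefficients $u_\lambda(0)$ furnished by $u\in H^1$ is of trace type, roughly $\sum_\lambda |\lambda|\,|u_\lambda(0)|^2<\infty$, which is far from rapid decrease. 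Indeed, boundary data with $u_\lambda(0)=|\lambda|^{-M}$ for a moderate $M$ yield $H^1$ solutions of $Du=0$ on the collar that satisfy the boundary condition but are not smooth at $t=0$; what excludes such behaviour for elements of $\ker(\D_v(r);P_v)$ is the global equation on $X_v(r)$, and the standard way to exploit it is the APS parametrix (gluing an interior parametrix to the explicit solution operator on the half-cylinder), which gives $u\in H^s$ for every $s$ and only as a consequence the rapid decrease of the $u_\lambda(0)$. So for the kernel equality either cite the regularity part of \cite[\S 2]{aps1}, as the paper implicitly does, or supply the parametrix/Calder\'on-projector argument; the collar computation alone does not close this step.
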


\section{Extended $L^2$ Forms}\label{sec:extendeds}

On the stars $X_v(\infty)$, by restricting our attention to $L^2$ forms we lose information since the \textbf{constant part} $\sum_{\lambda=0} u_\lambda \pi^*\phi_\lambda$ has to vanish identically for the form to be square-integrable. This is why we need to introduce the well-known concept of \textbf{extended $\mathbf L^2$ forms}.

\begin{define} An \textbf{extended $L^2$ form} on $X_v(\infty)$ (or more generally, on a manifold with cylindrical ends) is a pair $(u,\widehat{u})$ where $u$ is a form on $X_v(\infty)$, $\widehat{u} \in L^2(\widehat{F}_v)$ and
  \begin{enumerate}
   \item $u \in L^2_{loc}\big(F_v(\infty)\big)$\; , \;$\widehat{u} \in \ker\widehat{D}_v$
   \item $u \big|_{Z_v(\infty)} - \pi^*\widehat{u} \in L^2\big( F_v(\infty)\big|_{Z_v(\infty)} \big)$
  \end{enumerate}
Here, the operator $\widehat{D}_v$ denotes the direct sum $\bigoplus_{e \sim v} \widehat{D}_e$. We denote the space of extended $L^2$ forms by $L^2_{ex}\big(F_v(\infty)\big)$.

The form $\widehat{u}$ can be regarded as an asymptote for $u$, hence will be dubbed the \textbf{limiting value} of $u$.
\end{define}

Alternatively (as is done in \cite{aps1}), an extended $L^2$ form can be considered a single form $\widetilde{u}$, which is locally square-integrable, so that for $|t|$ large we have
  \begin{equation}\label{eq:ext.representatives}
   \widetilde{u}(y,t) = u(y,t) + \pi^*\widehat{u}(y) \;,
  \end{equation}
where $u \in L^2\big(F_v(\infty)\big|_{Z_v(\infty)}\big)$ and $\widehat{u} \in \ker \widehat{D}_v$. This motivates the following definition.

\begin{define}\label{def:ext.gb} Let $\D_v^{ex}(\infty)$ be the Gauss-Bonnet operator with domain comprising of the extended $L^2$ forms with representatives $u \in \dom\D_v(\infty)$, $\widehat{u} \in \ker\widehat{D}_v$ (with respect to \eqref{eq:ext.representatives}), i.e.
 \[ \dom \D_v^{ex}(\infty) \cong \dom \D_v(\infty) \oplus \ker\widehat{D}_v \;. \]
Forms satisfying $\D_v^{ex}(\infty) u = 0$ are called \textbf{extended $L^2$ solutions}.
\end{define}

After introducing extended $L^2$ forms, explicit calculations easily verify a comparison of kernels which was published in \cite[3.11]{aps1} as well as \cite[2.2]{clm1}:

\begin{prop}[Atiyah-Patodi-Singer]\label{exts.and.kers}
  \begin{equation}\label{isos.kernels}\begin{array}{lccl}
     &\ker\big(D_v(r);P_v\big)            &\cong &\ker \D_v(\infty) \\[3pt]
     &\ker\big(D_v(r);\overline{P}_v\big) &\cong &\ker\D_v^{ex}(\infty)
  \end{array}\end{equation}
Further, for an extended $L^2$ solution $(u,\widehat{u})$ of $\D_v^{ex}(\infty)u=0$ there is $\widetilde{u} \in \ker\big(D_v(r);\overline{P}_v\big)$ such that
  \begin{equation*}\begin{array}{rcl}
    u(y,t)\big|_{Z_v(\infty)} &= &\bigoplus\limits_{e \sim v}\sum\limits_{\bullet \lambda_e \ge 0}
                                  \widetilde{u}_{\lambda_e}(0)e^{\lambda_e t}\pi^*\phi_{\lambda_e}(y)
             \qquad\text{and} \\[1em]
    \widehat{u} &= &\bigoplus\limits_{e \sim v} \sum\limits_{\lambda_e=0} \widetilde{u}_{\lambda_e}(0) \,
                    \phi_{\lambda_e} \qquad\text{in}\quad C^\infty(\widehat{F}_v)\;,
  \end{array}\end{equation*}
where the $\widetilde{u}_{\lambda_e}$ denote the coefficient functions in the expansion of $\widetilde{u}$ on $Z_{(v,e)}(r)$ with respect to the orthonormal basis $\{\phi_{\lambda_e}\}$ of eigenfunctions of $\widehat{D}_e$.
\end{prop}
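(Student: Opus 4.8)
The plan is to realise both isomorphisms in \eqref{isos.kernels} concretely, as a restriction map from $X_v(\infty)$ to $X_v(r)$ together with a harmonic-extension map in the reverse direction; the entire content lies in describing harmonic forms on the cylindrical ends. Via \thref{D.on.cylinders} (the factorisation \eqref{D.decomp}) and the eigen-expansion \eqref{forms.expand}, the equation $Du=0$ on a cylinder $Y_e\times I$ decouples into the scalar ordinary differential equations $\partial_t u_\lambda=\lambda u_\lambda$ over $\spec\widehat D_e$, so that every harmonic form there equals
\[
 u\big|_{Y_e\times I} \;=\; \sum_\lambda u_\lambda(t_0)\,e^{\lambda(t-t_0)}\,\pi^*\phi_\lambda \;;
\]
that this exhausts all harmonic forms uses the discreteness of $\spec\widehat D_e$ (\thref{props.of.sigma.D.hat}) together with elliptic regularity, which makes any $L^2_{loc}$ solution of $Du=0$ smooth. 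Hence, on a half-infinite end $Z_{(v,e)}(\infty)$ such a form lies in $L^2$ precisely when only the modes $e^{\lambda t}$ decaying towards the infinite end occur, and it has the extended-$L^2$ shape \eqref{eq:ext.representatives} precisely when, in addition, the constant mode $\lambda=0$ is admitted, its coefficient being the limiting value on that end.

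For the first isomorphism I would send $u\in\ker\D_v(\infty)$ to its restriction $u|_{X_v(r)}$: this is smooth and harmonic, and by the above its boundary datum $u|_{\partial X_v(r)}=\sum_\lambda u_\lambda(0)\phi_\lambda$ contains only the decaying modes, hence lies in $P_v$; this is exactly why the boundary condition was chosen to be $P_{(v,e)}=P_{e,\bullet_{(v,e)}}$, with the half-edge sign and the use of $-\widehat D$ rather than $\widehat D$ arranged so that decay at the end $Z_{(v,e)}(\infty)$ coincides with membership in $P_{e,\bullet_{(v,e)}}$. Conversely, given $w\in\ker(D_v(r);P_v)$, on each cylinder $Z_{(v,e)}(r)$ it already equals $\sum_\lambda w_\lambda(0)e^{\lambda t}\pi^*\phi_\lambda$ with only decaying modes present (by the boundary condition), so extending the range of $t$ over the whole end yields a harmonic form $\widetilde w$ on $X_v(\infty)$ that restricts to $w$ and is square-integrable; by Gaffney's theorem quoted above $\widetilde w\in\dom\D_v(\infty)=H^1(F_v(\infty))$, and it is smooth by elliptic regularity, so $\widetilde w\in\ker\D_v(\infty)$. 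The two assignments are $\R$-linear and, by construction, mutually inverse.

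The second isomorphism follows by the same argument with $\overline{P}_v=P_v\oplus\ker\widehat D_v$ in place of $P_v$: a boundary datum may now have a $\ker\widehat D_v$-component, whose harmonic extension is a $t$-independent term $\pi^*\widehat u$ on the ends, with $\widehat u\in\ker\widehat D_v$ collecting the $\lambda=0$ coefficients; subtracting $\pi^*\widehat u$ off the ends returns an element of $\dom\D_v(\infty)$, so the extension represents a pair in $\dom\D_v^{ex}(\infty)\cong\dom\D_v(\infty)\oplus\ker\widehat D_v$ as in \thref{def:ext.gb}, and conversely the limiting value of an extended $L^2$ solution is exactly this constant part. The two displayed formulas are then read off: if $\widetilde u\in\ker(D_v(r);\overline{P}_v)$ is the form corresponding to $(u,\widehat u)$ and $\widetilde u_{\lambda_e}$ denote its cylinder coefficients, then on each $Z_{(v,e)}(r)$ one has $\widetilde u_{\lambda_e}(t)=\widetilde u_{\lambda_e}(0)e^{\lambda_e t}$ with $\bullet\lambda_e\ge 0$; the $\lambda_e=0$ part gives $\widehat u=\bigoplus_{e\sim v}\sum_{\lambda_e=0}\widetilde u_{\lambda_e}(0)\phi_{\lambda_e}$, while the full sum, extended over the end, is $u|_{Z_v(\infty)}$.

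No step above is deep. The only place demanding genuine care is the bookkeeping with orientations: checking, half-edge by half-edge, that square-integrability at the infinite end of $Z_{(v,e)}(\infty)$ is equivalent to the boundary value lying in $P_{e,\bullet_{(v,e)}}$, which is precisely where the normalisation by $-\widehat D$ (not $\widehat D$) enters and which must then be tracked through the extended case. As the text notes, the statement is in substance \cite[3.11]{aps1} and \cite[2.2]{clm1}, so apart from this the verification reduces to the explicit computations sketched here.
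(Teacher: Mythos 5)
Your proposal is correct and is essentially the paper's own route: the paper offers no argument of its own for this proposition, deferring to \cite[3.11]{aps1} and \cite[2.2]{clm1} with the remark that ``explicit calculations easily verify'' it, and those calculations are exactly what you write down --- decoupling $Du=0$ on the cylinders into $\partial_t u_\lambda=\lambda u_\lambda$ via \eqref{D.decomp}, and realising the isomorphisms by restriction to $X_v(r)$ and exponential extension over the infinite ends, with $P_v$ (resp.\ $\overline{P}_v$) picking out the decaying (resp.\ decaying plus constant) modes and the $\lambda=0$ coefficients giving the limiting value $\widehat{u}$. The single point you flag but leave unchecked --- the half-edge-by-half-edge verification that membership in $P_{e,\bullet_{(v,e)}}$ corresponds to decay toward the infinite end of $Z_{(v,e)}(\infty)$ --- is indeed the only delicate bookkeeping (the paper's coordinate conventions for $Z_{(v,e)}(r)$ versus $Z_{(v,e)}(\infty)$ are shifted, which is why the displayed formula sums over $\bullet\lambda_e\ge 0$), so nothing essential is missing from your argument.
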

The existence of isomorphisms \eqref{isos.kernels} shows that there are natural restriction maps
  \begin{equation*}\begin{array}{rccc}
   B_v : &\ker\D_v^{ex}(\infty) \longrightarrow \ker\widehat{D}_v &\;, &(u,\widehat{u}) \longmapsto \widehat{u} \;,
  \end{array}\end{equation*}
which we call \textbf{boundary data maps}. With respect to the representation $\pi^*\widehat{u} = u^0 + u^1 \wedge dt$, we define \textbf{absolute} and \textbf{relative} boundary data maps $B^a_v$, $B^r_v$ by
  \begin{equation*}\begin{array}{rccc}
   B^a_v : &\ker\D_v^{ex}(\infty) \longrightarrow \ker D_{Y_v} &\;, &(u,\widehat{u}) \longmapsto u^0 \;, \\[3pt]
   B^r_v : &\ker\D_v^{ex}(\infty) \longrightarrow \ker D_{Y_v} &\;, &(u,\widehat{u}) \longmapsto u^1 \;.
  \end{array}\end{equation*}

\begin{define} We denote the images of the boundary data maps $B_v$ (respectively $B_v^a$, $B_v^r$) by $L_v$ ($L_v^a$, $L_v^r$) and call them (\textbf{absolute} respectively \textbf{relative}) \textbf{limiting values}.
\end{define}
If for $u \in \ker\D_v^{ex}(\infty)$, $w \in \ker\D_v(\infty)$ we define
  \begin{equation}\label{ext.inpro}
    \inpro{u}{w}_{ex} \; = \int_{X_v(\infty)} u\wedge\ast w \;,
  \end{equation}
this gives a pairing between $\ker\D_v^{ex}(\infty)$ and $\ker\D_v(\infty)$. Let $w_1, \dotsc, w_k$ be an orthonormal basis of $\ker\D_v(\infty)$ (this space is finite dimensional by \thref{exts.and.kers}). It is easy to verify that the definitions of the \textbf{extended part} $[u]_{ex} := u - \sum \inpro{u}{w_j}_{ex} w_j$ and the $\mathbf L^2$ \textbf{part} $[u]_{L^2} := \sum \inpro{u}{w_j}_{ex}w_j$ do not depend on the choice of basis and that the decomposition $u = [u]_{ex} + [u]_{L^2}$ is orthogonal with respect to the pairing $\inpro{\cdot}{\cdot}_{ex}$ in the sense that for any $u \in \ker\D_v^{ex}(\infty)$, $w \in \ker\D_v(\infty)$ we have $\inpro{[u]_{ex}}{w}_{ex} = 0$.

Having this decomposition and pairing \eqref{ext.inpro} in mind, we can show that the extended part of an extended $L^2$ solution is uniquely determined by its limiting value. This implies that the space of extended $L^2$ solutions splits as a direct sum
  \begin{equation*}\begin{split}
    \ker\D_v^{ex}(\infty) &= \big\{\, [u]_{L^2} \;\big|\; u \in \ker\D_v^{ex}(\infty) \,\big\} \oplus
                            \big\{\, [u]_{ex}  \;\big|\; u \in \ker\D_v^{ex}(\infty) \,\big\} \\[2pt]
                          &\cong \; \ker\D_v(\infty) \oplus L_v \;.
  \end{split}\end{equation*}
And as a simple corollary of this and \thref{exts.and.kers} we obtain:

\begin{cor}[Atiyah-Patodi-Singer] The $L^2$ solutions of \,$\D_v(\infty)u=0$ and $\D_v^2(\infty)u=0$ coincide, as do the extended $L^2$ solutions of $\D_v^{ex}(\infty)u=0$ and $\big(\D_v^{ex}(\infty)\big)^2u=0$. Furthermore, the spaces $\ker\big(\D_v(r);\overline{P}_v\big)$, $\ker\D_v(\infty)$ and $\ker\D_v^{ex}(\infty)$ are finite dimensional.
\end{cor}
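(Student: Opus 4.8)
The plan is to settle the two ``$D$ versus $D^{2}$'' assertions first and then obtain the finite-dimensionality statements from the comparison results already available. The inclusions $\ker\D_v(\infty)\subseteq\ker\D_v^{2}(\infty)$ and $\ker\D_v^{ex}(\infty)\subseteq\ker(\D_v^{ex}(\infty))^{2}$ are immediate, so only the reverse inclusions require an argument. For the $L^{2}$ case I would invoke Gaffney's theorem quoted above: $\D_v(\infty)$ is self-adjoint, so for $u\in\dom\D_v^{2}(\infty)$ with $\D_v^{2}(\infty)u=0$ one has $\D_v(\infty)u\in\dom\D_v(\infty)$ and $\inpro{\D_v(\infty)u}{\D_v(\infty)u}=\inpro{\D_v^{2}(\infty)u}{u}=0$, hence $\D_v(\infty)u=0$. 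Nothing beyond self-adjointness enters here.

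For the extended case self-adjointness is of no use, since $\D_v^{ex}(\infty)$ is a proper --- hence non-symmetric --- extension of the already self-adjoint operator $\D_v(\infty)$; instead I would use a Green's formula on truncated stars. Given $u\in\dom(\D_v^{ex}(\infty))^{2}$ with $(\D_v^{ex}(\infty))^{2}u=0$, put $w:=\D_v^{ex}(\infty)u$. The crucial preliminary point is that $D$ annihilates the pullback $\pi^{*}\widehat u$ of a limiting value (it is $t$-independent and lies in $\ker\widehat D_v$), so $w$ has vanishing limiting value; thus $w$ is a genuine $L^{2}$ form, lies in $\dom\D_v(\infty)$, and satisfies $\D_v(\infty)w=0$, i.e.\ $w\in\ker\D_v(\infty)$. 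In particular the expansion of $w$ on each end $Z_{(v,e)}(\infty)$ contains only eigenmodes with $\bullet\lambda<0$, so $w$ decays exponentially as $t\to\infty$; the separation-of-variables analysis of $(\D_v^{ex}(\infty))^{2}u=0$ on the ends (as in the discussion preceding \thref{exts.and.kers}) shows in turn that $u$ stays bounded there. Green's formula for $D$ on the truncation $X_v(T)$ then gives $\inpro{w}{w}_{L^{2}(X_v(T))}=\inpro{u}{Dw}_{L^{2}(X_v(T))}\pm\int_{Y_v\times\{T\}}\inpro{u}{(\pi^{*}\sigma)w}$; the first term on the right vanishes since $Dw=0$, and the boundary integral tends to $0$ as $T\to\infty$ because $\sigma$ is a fibrewise isometry (up to sign), $u$ is bounded and $w$ decays exponentially. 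Letting $T\to\infty$ forces $w=0$, i.e.\ $\D_v^{ex}(\infty)u=0$. I expect the only genuinely delicate point to be the vanishing of the boundary integral --- that is, controlling $u$ pointwise on the slices $Y_v\times\{T\}$ --- which is where the decay/growth estimates for solutions on the cylinders are really used.

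For finite-dimensionality I would not argue directly but string together the comparison isomorphisms. By the Atiyah-Patodi-Singer theorem, $\D_v(r)$ with boundary condition $P_v$ is Fredholm, so $\ker(\D_v(r);P_v)$ is finite-dimensional, and \thref{exts.and.kers} transports this to $\ker\D_v(\infty)$. The splitting $\ker\D_v^{ex}(\infty)\cong\ker\D_v(\infty)\oplus L_v$ recorded above, together with $L_v\subseteq\ker\widehat D_v=\bigoplus_{e\sim v}\ker\widehat D_e$ and the finite-dimensionality of each $\ker\widehat D_e$ (\thref{props.of.sigma.D.hat}), then shows $\ker\D_v^{ex}(\infty)$ is finite-dimensional, and one last application of \thref{exts.and.kers}, namely $\ker(\D_v(r);\overline P_v)\cong\ker\D_v^{ex}(\infty)$, completes the list.
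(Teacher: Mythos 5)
Your argument is correct, but it is more self-contained than what the paper actually does: the paper offers no independent proof here, presenting the corollary as an immediate consequence of the splitting $\ker\D_v^{ex}(\infty)\cong\ker\D_v(\infty)\oplus L_v$ recorded just before it and of the kernel isomorphisms of \thref{exts.and.kers}, with the kernel coincidences themselves simply attributed to Atiyah--Patodi--Singer. Your finite-dimensionality chain (Fredholmness of the $P_v$-boundary problem, transported by \thref{exts.and.kers}, then the splitting together with $L_v\subset\ker\widehat{D}_v$ and \thref{props.of.sigma.D.hat}) is precisely the route the paper has in mind, and it correctly handles $\overline{P}_v$ through the isomorphism with $\ker\D_v^{ex}(\infty)$ rather than through a separate Fredholm statement. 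For the two ``$D$ versus $D^2$'' assertions you instead reconstruct the original APS-style analysis: self-adjointness of $\D_v(\infty)$ (Gaffney) settles the $L^2$ case, and in the extended case the observation that $D\pi^*\widehat{u}=0$ forces $w=\D_v^{ex}(\infty)u$ to be a genuine $L^2$ solution, hence exponentially decaying on the ends, after which a Green--Stokes formula on the truncations $X_v(T)$ kills $w$. This is sound; the one point you flag as delicate is in fact easier than you suggest: no pointwise control of $u$ on the slices $Y_v\times\{T\}$ is needed, since the mode expansion of an extended solution of $\big(\D_v^{ex}(\infty)\big)^2u=0$ on the ends shows $\|u(\cdot,T)\|_{L^2(Y_v)}$ stays bounded (it tends to the norm of $\widehat{u}$) while $\|w(\cdot,T)\|_{L^2(Y_v)}$ decays like $e^{-\lambda_0 T}$, so Cauchy--Schwarz on the cross-section already makes the boundary integral vanish as $T\to\infty$. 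What your version buys is independence from the external citation to \cite{aps1}; what the paper's phrasing buys is brevity, since everything it needs has already been set up in Section \ref{sec:extendeds}.
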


Since $D(r)$ is essentially self-adjoint and elliptic, it is an easy exercise to show that $\ker D(r) = \ker \D^2(r) = \ker \overline{\Delta}(r) = \ker \Delta(r)$, where $\Delta$ denotes the \textbf{Hodge-Laplace operator} on differential forms. $\D_v(\infty)$ is not self-adjoint, but taking into account the second part of \thref{exts.and.kers}, nevertheless we may show $\ker\D_v(\infty) = \ker \Delta\big|_{L^2(F_v(\infty))}$ and $\ker\D_v^{ex}(\infty) = \ker\Delta\big|_{L^2(F_v(\infty))}$. (Here, $\Delta\big|_{L^2(F_v(\infty))}$ is considered the Hodge-Laplace operator on extended $L^2$ forms, compare \thref{def:ext.gb}.) Since the term Hodge cohomology refers to the space of \textbf{harmonic forms} (i.e. forms being in the kernel of the Hodge-Laplacian), this is why we sometimes refer to \textbf{extended harmonic forms} instead of extended $L^2$ solutions.

\bigskip Finally, we want to comment on a symplectic structure of $L_v$. Let
  \begin{equation}\label{symplectic.product}
    \{\cdot,\cdot\}_v \;:\; L^2(\widehat{F}_v) \times L^2(\widehat{F}_v) \longrightarrow \R
     \quad,\quad \{u,w\}_v := \sum_{e \sim v} \bullet_{(v,e)} \inpro{u}{\sigma w}_{\widehat{F}_e} \;.
  \end{equation}
Then, $\{\cdot\,,\cdot\}_v$ defines a symplectic inner product on $L^2(\widehat{F}_v)$, which restricts to a symplectic inner product on $\ker\widehat{D}_v$. In the literature (for instance in \cite[2.3]{clm1} which uses \cite{aps1}) there are various proofs of the fact that with respect to inner product \eqref{symplectic.product} $L_v$ is a Lagrangian subspace of $\ker\widehat{D}_v$. But what is of more importance is the resulting decomposition of the spaces of limiting values.

\begin{prop}[Cappell-Lee-Miller]\label{limiting.values.decomp}The spaces of limiting values split as direct sums $L_v = L_v^a \oplus L_v^r$ and it holds $\ast L_v^a = L_v^r$ and $\ast_{Y_v} L_v^a = (L_v^a)^\perp$ (and similarly with absolute and relative limiting values interchanged). Here, the last identity holds in $\ker D_{Y_v}$.
\end{prop}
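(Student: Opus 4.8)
The plan is to deduce all three assertions from three inputs: the Hodge-$\ast$ symmetry of the space of extended harmonic forms, an integration-by-parts identity on a truncated star, and the quoted fact that $L_v$ is a Lagrangian subspace of $\big(\ker\widehat{D}_v,\{\cdot,\cdot\}_v\big)$. Throughout I use the identification $\ker\widehat{D}_v=\bigoplus_{e\sim v}\ker\widehat{D}_e\cong\ker D_{Y_v}\oplus\ker D_{Y_v}$ coming from $\widehat{u}=u^0+u^1\wedge dt$, writing $V_v^a$ for the first ("transversal", absolute) summand and $V_v^r$ for the second ("normal", relative) one; with this identification $L_v^a$ and $L_v^r$ are the images of $L_v$ under the two coordinate projections, so the inclusion $L_v\subseteq L_v^a\oplus L_v^r$ inside $\ker\widehat{D}_v$ is automatic and only the reverse inclusion requires work.

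First I would record the Hodge-$\ast$ symmetry. On $X_v(\infty)$ the operator $\ast$ commutes with $\Delta$, is a pointwise isometry, and --- by the formulas in the proof of Proposition \ref{D.on.cylinders} --- maps $\pi^*\ker\widehat{D}_v$ into itself (because $\ast_{Y_e}$ preserves $\ker D_{Y_e}=\ker\Delta_{Y_e}$); hence it restricts to a linear isomorphism of $\ker\D_v^{ex}(\infty)$. Reading those same formulas, the induced action on limiting values carries $u^0+u^1\wedge dt$ to $\pm\,\ast_{Y_v}u^1+(\ast_{Y_v}u^0)\wedge dt$ (with a degree-dependent sign), so that $B_v^a(\ast w)=\pm\ast_{Y_v}B_v^r(w)$ and $B_v^r(\ast w)=\ast_{Y_v}B_v^a(w)$ for every extended harmonic $w$. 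Passing to images and using $\ast_{Y_v}^2=\pm\id$ this gives $\ast_{Y_v}L_v^a=L_v^r$ (and $\ast_{Y_v}L_v^r=L_v^a$) --- the assertion ``$\ast L_v^a=L_v^r$'' --- and in particular $\dim L_v^a=\dim L_v^r$.

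Next I would establish that $L_v^a$ and $L_v^r$ are orthogonal in $\ker D_{Y_v}$. Fix extended harmonic $w_1$ of degree $k$ and $w_2$ of degree $k+1$ (other degree combinations contribute nothing, the relevant pairing then being between forms of unequal degree on $Y_v$). On the compact truncated star $X_v\cup\bigcup_{e\sim v}\big(Y_e\times[0,\bullet_{(v,e)}T]\big)$, whose boundary is $\bigsqcup_{e\sim v}Y_e\times\{\bullet_{(v,e)}T\}$, Stokes' theorem gives $\inpro{dw_1}{w_2}-\inpro{w_1}{\delta w_2}=\int_{\partial}w_1\wedge\ast w_2$. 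Since a form in $\ker D$ is automatically closed and coclosed (for degree reasons), the left-hand side vanishes identically in $T$; letting $T\to\infty$ and using that the components of $w_1,w_2$ along the non-zero modes of $\widehat{D}_v$ decay exponentially along the cylinders, the right-hand side tends to the pairing of the limiting values, which after the cylinder computation of the previous step equals $\pm\inpro{B_v^a(w_1)}{B_v^r(w_2)}_{Y_v}$ up to the orientation signs $\bullet_{(v,e)}$. Applying this to the homogeneous degree-$k$ part of $w_1$ and the degree-$(k+1)$ part of $w_2$ and summing over $k$ yields $L_v^a\perp L_v^r$ in $\ker D_{Y_v}$. Keeping track of the signs $\bullet_{(v,e)}$ here is the only point demanding care; the most economical way to organise it is to observe that the boundary term of Stokes' formula is, up to sign, exactly $\{B_v(w_1),B_v(w_2)\}_v$.

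Finally I would do the dimension count. By the orthogonality just proved, $2\dim L_v^a=\dim L_v^a+\dim L_v^r\le\dim\ker D_{Y_v}$. On the other hand $L_v$ is Lagrangian, so $\dim L_v=\tfrac12\dim\ker\widehat{D}_v=\dim\ker D_{Y_v}$, and from $L_v\subseteq L_v^a\oplus L_v^r$ we get $\dim\ker D_{Y_v}\le\dim L_v^a+\dim L_v^r=2\dim L_v^a$. Hence $2\dim L_v^a=\dim\ker D_{Y_v}$, which forces $\dim(L_v^a\oplus L_v^r)=\dim L_v$ and therefore $L_v=L_v^a\oplus L_v^r$; moreover $L_v^r=\ast_{Y_v}L_v^a$ then has dimension $\tfrac12\dim\ker D_{Y_v}=\dim(L_v^a)^\perp$ while being contained in $(L_v^a)^\perp$, so $\ast_{Y_v}L_v^a=(L_v^a)^\perp$. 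The statements with absolute and relative limiting values interchanged follow on applying the isomorphism $\ast_{Y_v}$. The main obstacle is not conceptual but is concentrated in the third step: making the $T\to\infty$ limit of the boundary integral rigorous (via the exponential decay of the non-constant modes) and, above all, propagating the orientation data $\bullet_{(v,e)}$ correctly through that boundary term so that the conclusion lands on the right side of the stated identity.
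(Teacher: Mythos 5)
The paper offers no proof of this proposition at all --- it is quoted from Cappell--Lee--Miller/APS right after recalling that $L_v$ is Lagrangian for the signed pairing $\{\cdot,\cdot\}_v$ --- so your argument has to stand on its own, and it has a genuine gap at precisely the point you set aside as sign bookkeeping. Your Green--Stokes computation on the truncated star produces, as you yourself note, the vanishing of a boundary term which is the \emph{signed} pairing $\{B_v(w_1),B_v(w_2)\}_v=\sum_{e\sim v}\bullet_{(v,e)}\inpro{\cdot}{\sigma\,\cdot}_{\widehat F_e}$ (the boundary orientation of $Y_e\subset\partial\overline{X_v}$ differs from the fixed orientation of $Y_e$ by $\bullet_{(v,e)}$). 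From this you conclude the \emph{unsigned} orthogonality $L_v^a\perp L_v^r$ in $\ker D_{Y_v}$, and later $L_v^r=\ast_{Y_v}L_v^a\subseteq(L_v^a)^\perp$ with $\ast_{Y_v}=\bigoplus_e\ast_{Y_e}$ taken with the fixed orientations. That passage is not a formality: it fails whenever a vertex has half-edges with both signs. Take $X=T^2$ cut along two parallel circles, so a vertex manifold is $S^1\times[0,1]$ with $\bullet_{(v,e_1)}=-\bullet_{(v,e_2)}$; the extended harmonic forms on the star $S^1\times\R$ are spanned by $1,d\theta,dt,d\theta\wedge dt$, so $L_v^a=L_v^r=\spa\{(1,1),(d\theta,d\theta)\}\subset\ker D_{Y_{e_1}}\oplus\ker D_{Y_{e_2}}$, which is not orthogonal to itself, and $\ast_{Y_v}L_v^a=L_v^a\neq(L_v^a)^\perp$, while the signed statements do hold. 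So your step 2 only re-proves isotropy of $L_v$ (which you already imported via the Lagrangian property), and both the upper bound $2\dim L_v^a\le\dim\ker D_{Y_v}$ and the identity $\ast_{Y_v}L_v^a=(L_v^a)^\perp$, as you state them, are not established.

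The repair is to keep the orientation data inside the pairing instead of deferring it: define $Q(a,b)=\sum_{e\sim v}\bullet_{(v,e)}\inpro{a}{b}_{Y_e}$ on $\ker D_{Y_v}$ (equivalently, use the Hodge star of $Y_v$ for the boundary orientation of $\partial\overline{X_v}$, which is $\bigoplus_e\bullet_{(v,e)}\ast_{Y_e}$). Your Stokes argument, applied to the realisations $(u_a,0)$ of elements of $L_v^a$ (remark after \thref{my.coho.iso}) and $(0,u_r)$ of elements of $L_v^r$ (apply $\ast$ to the former), then gives $Q(L_v^a,L_v^r)=0$; since $Q$ is nondegenerate this yields $\dim L_v^a+\dim L_v^r\le\dim\ker D_{Y_v}$, your dimension count goes through, and you obtain $L_v=L_v^a\oplus L_v^r$, $\ast L_v^a=L_v^r$, and the duality identity in its signed form, i.e.\ $(L_v^a)^{\perp}$ equals the image of $L_v^a$ under the boundary-orientation star (for a single half-edge per vertex, as in Cappell--Lee--Miller, this coincides with the unsigned reading, which is why the sign is invisible in the cited source). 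Your first step --- the $\ast$-symmetry of $\ker\D_v^{ex}(\infty)$, the componentwise exchange of absolute and relative limiting values, and hence $\dim L_v^a=\dim L_v^r$ --- is fine as written; it is only steps 2 and 3 that must be rephrased with the $\bullet_{(v,e)}$-weighted pairing for the argument, and indeed the statement itself in the multi-edge case, to be correct.
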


\section{The Generalised Mayer-Vietoris Sequence}\label{sec:gen.mv}

In this section we introduce a generalised Mayer-Vietoris argument which will later enable us to tell whether extended $L^2$ solutions \emph{match at infinity} or not. For the sake of completeness we start with a result from Atiyah, Patodi and Singer \cite[4.9]{aps1}:

\begin{prop}[Atiyah-Patodi-Singer] Let $H^*\big(X_v(\infty)\big)$, respectively $H_0^*\big(X_v(\infty)\big)$, denote de Rham cohomology of $X_v(\infty)$ (with compact supports). Then,
  \[ \ker\D_v(\infty) \cong \im\Big(\, H_0^*\big(X_v(\infty)\big) \rightarrow H^*\big(X_v(\infty)\big)\,\Big) \;. \]
\end{prop}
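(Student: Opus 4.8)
The plan is to realise $\ker\D_v(\infty)$ as the space of $L^2$ harmonic forms on the complete manifold $X_v(\infty)$ — as recorded at the end of Section \ref{sec:extendeds}, $\ker\D_v(\infty)=\ker\Delta|_{L^2(F_v(\infty))}$, and a form in this kernel is, by degree decomposition of $d+\delta$, both closed and co-closed — to observe that every such form decays exponentially along the cylindrical ends, and then to compare the induced de Rham classes with $\im\big(H_0^*(X_v(\infty))\to H^*(X_v(\infty))\big)$. The exponential decay is immediate from the structure theory: on a cylinder $Z_{(v,e)}(\infty)$ the equation $\D u=0$ reads $\partial_t u_\lambda=\lambda u_\lambda$ in the eigenform expansion \eqref{forms.expand}, so $u_\lambda(t)=u_\lambda(0)e^{\lambda t}$, and square-integrability along the end kills every zero mode (the components in $\ker\widehat D_e$) and leaves only exponentially decaying terms. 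Thus $\omega\mapsto[\omega]$ is a well-defined linear map $\Phi\colon\ker\D_v(\infty)\to H^*(X_v(\infty))$, and it suffices to show that $\Phi$ is injective with image $\im\big(H_0^*\to H^*\big)$.

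For injectivity, I would take $\omega\in\ker\D_v(\infty)$ with $\omega=d\eta$. Writing $\omega=\omega^0+\omega^1\wedge dt$ on an end and using closedness ($d_{Y_e}\omega^0=0$, $\partial_t\omega^0=\pm d_{Y_e}\omega^1$) together with the decay, one checks from the mode expansion that $\omega$ admits a primitive $\eta$ that is bounded along the ends (the only unconstrained direction is the $Y_e$-harmonic part, which converges exponentially). Since each boundary piece $\partial X_v(T)$ has volume $\vol(Y_v)$, independent of $T$, and $\ast\omega$ is $O(e^{-cT})$ there, Stokes' theorem on the exhaustion $X_v(T)\nearrow X_v(\infty)$ gives $\|\omega\|_{L^2}^2=\int_{X_v(T)}d\eta\wedge\ast\omega=\int_{\partial X_v(T)}\eta\wedge\ast\omega+\int_{X_v(T)}\eta\wedge\ast\delta\omega\xrightarrow{T\to\infty}0$ — the bulk term vanishing because $\delta\omega=0$ — so $\omega=0$.

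To identify $\im\Phi$ with $\im\big(H_0^*\to H^*\big)$, the inclusion $\subseteq$ is the easy direction: on each end the class $[\omega^0(t)]\in H^*(Y_e)$ is independent of $t$ (since $\partial_t\omega^0$ is exact on $Y_e$) and tends to $0$, hence vanishes, so $[\omega]$ maps to $0$ in $\bigoplus_e H^*(Y_e)$; by exactness of the long exact sequence of the pair $\big(X_v(\infty),Z_v(\infty)\big)$ — equivalently of $(\overline{X_v},Y_v)$, using that $X_v(\infty)$ retracts onto $\overline{X_v}$ and $H_0^*(X_v(\infty))\cong H^*(\overline{X_v},Y_v)$ — the class $[\omega]$ lifts to $H_0^*(X_v(\infty))$. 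For $\supseteq$ I would use \thref{exts.and.kers}, which gives $\ker\D_v(\infty)\cong\ker\big(D_v(r);P_v\big)$: the $P_v$ are the Atiyah--Patodi--Singer conditions for $-\widehat D$, so the kernel of $D_v(r)$ on the compact manifold-with-boundary $X_v(r)$ under these conditions is, degreewise, the image of $H^*(X_v(r),\partial X_v(r))$ in $H^*(X_v(r))$ by the computation of \cite[\S 3--4]{aps1}; and $X_v(r)$ is diffeomorphic to $\overline{X_v}$, with $H^*(X_v(r))\cong H^*(X_v(\infty))$ and $H^*(X_v(r),\partial X_v(r))\cong H_0^*(X_v(\infty))$. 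A self-contained alternative is to take a compactly supported closed representative $\alpha$ of a given class in the image and let $\omega$ be its orthogonal projection onto $\ker\D_v(\infty)$; then $\alpha-\omega$ is closed, $L^2$, exponentially decaying, orthogonal to $\ker\D_v(\infty)$ and, since $\alpha$ is closed, to $\overline{\im\delta}$, hence lies in $\overline{\im d}$, and one argues — using the product structure of the ends and the vanishing of its $Y_e$-restrictions — that such a form is exact, so $[\alpha]=\Phi(\omega)$.

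The main obstacle is precisely the inclusion $\supseteq$, i.e.\ producing an $L^2$ harmonic representative for every class in $\im(H_0^*\to H^*)$, equivalently the lower bound $\dim\ker\D_v(\infty)\ge\dim\im(H_0^*\to H^*)$. On the open manifold $X_v(\infty)$ the Laplacian has $0$ at the bottom of its continuous spectrum — coming from the constant modes over the cross-sections — so there is no bounded Green operator and the naive $L^2$ Hodge decomposition is unavailable; the obstruction to exactness of a decaying closed form is carried exactly by $\ker\widehat D_e$, which is why routing the surjectivity argument through \thref{exts.and.kers} and the compact Atiyah--Patodi--Singer boundary value problem is the most economical path.
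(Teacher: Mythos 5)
The paper gives no proof of this proposition at all: it is quoted directly from Atiyah--Patodi--Singer \cite[4.9]{aps1}, so the only comparison possible is with the APS argument that you also end up invoking. Your preliminary steps are essentially fine: exponential decay of $L^2$ solutions along the ends (note that closedness and co-closedness do not follow from ``degree decomposition'' alone -- you need the decay plus integration by parts, or the mode expansion, which you do have), the constancy and vanishing of the restricted classes on $Y_e$ together with the long exact sequence of the pair give $\im\Phi\subseteq\im\big(H_0^*\to H^*\big)$, and the bounded-primitive/Stokes-on-exhaustion argument for injectivity works (the patching of an arbitrary global primitive to one bounded on the ends is glossed but unproblematic).

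The genuine gap is in surjectivity, exactly where you yourself locate the difficulty. Your first route is a citation, not a proof: the assertion that $\ker\big(D_v(r);P_v\big)$ equals, degreewise, $\im\big(H^*(X_v(r),\partial X_v(r))\to H^*(X_v(r))\big)$ is not an independent intermediate result of \cite[\S\S 3--4]{aps1} from which 4.9 can be deduced -- via the correspondence of \thref{exts.and.kers} it is essentially 4.9 itself, so this route reproduces the paper's own proof-by-citation rather than replacing it. Your ``self-contained alternative'' has a real hole: the weak Kodaira decomposition only gives $\alpha-\omega\in\overline{\im d}$, the $L^2$-closure of the exact forms, and membership in that closure does not imply exactness; the step ``one argues that such a form is exact'' is precisely the statement to be proven. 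Indeed, the claim ``every closed, exponentially decaying $L^2$ form orthogonal to $\ker\D_v(\infty)$ is exact'' is equivalent to the surjectivity of $\Phi$ onto $\im\big(H_0^*\to H^*\big)$, and the natural attempt to verify it -- pairing the class with $\ast$ of $L^2$ harmonic forms -- only shows that it is annihilated by the classes $\Phi$ produces in complementary degree, which is circular. Closing this gap needs genuine mapping properties of $d$, $\delta$ or $\Delta$ on the cylindrical ends (solvability of $d\gamma=\beta$ or $\Delta u=\beta$ in weighted or extended spaces, i.e. the Fredholm theory of \cite[\S\S 2--3]{aps1} or an equivalent), which is exactly the analysis your sketch defers to the citation.
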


Now consider $u_a \in L_v^a$. $u_a$ defines a class $[u_a] \in H^*(Y_v)$ and there is at least one extended $L^2$ solution $u$ with limiting value $(u_a,u_r)$ and vanishing $L^2$ part. If we denote by the symbol $\sharp$ the lift of maps to the level of cohomology and by $i_v : Y_v \hookrightarrow \overline{X_v}$ inclusion, we clearly have $i_v^\sharp[u] = [u_a]$, since the relative part $u_r$ pulls back to zero. Hence, $[u_a]$ is the pull-back of a class in $H^*(X_v)$ (more precisely, possibly more than one) and there is a well-defined map
  \begin{equation*}
    \gamma : L^a_v \longrightarrow \im \Big(\, H^*(X_v) \rightarrow H^*(Y_v) \,\Big) \;.
  \end{equation*}

The \emph{Hodge Decomposition Theorem} for closed manifolds implies that $\gamma$ is injective. Now consider the double $\widetilde{X}_v$ of $X_v$ (two copies $X_v^+$, $X_v^-$ of $\overline{X_v}$ with boundaries identified and endowed with opposite orientations). Then, for any $0 \neq \eta \in \im \big( H^*(X_v) \rightarrow H^*(Y_v) \big)$, $\eta = i_v^\sharp \xi$, we may construct a class $\overline{\xi} \in H^*(\widetilde{X}_v)$ which (along the respective inclusion maps) pulls back to $\xi$ on \emph{both} copies of $X_v$ and to $\eta$ on $Y_v$: We need only consider the Mayer-Vietoris sequence for the pair $( X_v^+, X_v^- )$. The unique harmonic representative $\overline{u}$ of $\overline{\xi}$ is thus invariant under the map induced by interchanging corresponding points of the two identical copies. (Let us denote this map by $j^*$ for the moment.) On a small enough neighbourhood $\Sigma$ of $Y_v$ in $\widetilde{X}_v$ this has the following consequences:
  \[ \begin{split}
    0 = \overline{u} - j^*\overline{u} &= 2 u^1 \wedge dt \qquad\text{and}\\
    0 = \overline{u} - j^*\overline{u}
     &= \sum_\lambda \overline{u}_\lambda (e^{\lambda t} - e^{-\lambda t}) \pi^*\phi_\lambda \;,
  \end{split} \]
where we used representations \eqref{normal.form.decomp} and \eqref{forms.expand}. This shows that $\overline{u}\big|_{X_v}$ induces (in the sense of \thref{exts.and.kers}) an extended $L^2$ solution on $X_v(\infty)$ with limiting value $(u^0\big|_{Y_v},0)$. Since by construction $[u^0\big|_{Y_v}] = \eta$, this shows that $\gamma$ is surjective, hence an isomorphism.

\begin{prop}\label{my.coho.iso}There is an isomorphism \; $L_v^a \cong \im \Big(\, H^*(X_v) \rightarrow H^*(Y_v) \,\Big)$.
\end{prop}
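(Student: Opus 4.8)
The plan is to establish the isomorphism by constructing an explicit inverse to the map $\gamma : L_v^a \to \im\big(H^*(X_v) \to H^*(Y_v)\big)$ defined above, thereby showing $\gamma$ is bijective. In fact, most of the work has already been done in the paragraphs preceding the statement: $\gamma$ is well-defined (the relative part $u_r$ of an extended $L^2$ solution with prescribed absolute limiting value pulls back to zero under $i_v$, so $[u_a]$ lies in the image of $H^*(X_v) \to H^*(Y_v)$), and injectivity of $\gamma$ follows from the Hodge Decomposition Theorem on the closed manifold $\overline{X_v}$ together with \thref{exts.and.kers}, since an extended $L^2$ solution whose absolute limiting value is cohomologically trivial must itself be cohomologically trivial, hence its $L^2$ part and extended part both vanish. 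So the remaining content of the proof is surjectivity.

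For surjectivity I would take any nonzero class $\eta = i_v^\sharp\xi \in \im\big(H^*(X_v) \to H^*(Y_v)\big)$ and produce an extended $L^2$ solution on $X_v(\infty)$ whose absolute limiting value represents $\eta$ and whose relative limiting value vanishes. The construction goes through the double $\widetilde{X}_v = X_v^+ \cup_{Y_v} X_v^-$: using the Mayer--Vietoris sequence for the pair $(X_v^+, X_v^-)$ one obtains a class $\overline\xi \in H^*(\widetilde X_v)$ restricting to $\xi$ on each copy and to $\eta$ on $Y_v$. Let $\overline u$ be the unique harmonic representative of $\overline\xi$. Because $\overline\xi$ is symmetric under the involution $j$ exchanging the two copies, and the involution is an isometry, uniqueness of the harmonic representative forces $j^*\overline u = \overline u$. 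Writing $\overline u$ on a collar $\Sigma$ of $Y_v$ in the two coordinate representations \eqref{normal.form.decomp} and \eqref{forms.expand}, the relation $\overline u - j^*\overline u = 0$ yields simultaneously $u^1 \equiv 0$ and $\overline u_\lambda(e^{\lambda t} - e^{-\lambda t}) = 0$ for every $\lambda$, so all components with $\lambda \neq 0$ vanish on the collar and $\overline u$ is, near $Y_v$, a constant (in $t$) section lying in $\ker\widehat D_v$ with no relative part. Restricting $\overline u$ to $X_v^+ \cong \overline{X_v}$ and extending trivially (i.e. by its constant limiting value) over the cylinder thus defines, via the identification in \thref{exts.and.kers}, an extended $L^2$ solution on $X_v(\infty)$ with limiting value $(u^0|_{Y_v}, 0)$. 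By construction $[u^0|_{Y_v}] = \eta$, so $\gamma$ hits $\eta$, and surjectivity follows.

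Putting these together, $\gamma$ is an isomorphism onto $\im\big(H^*(X_v) \to H^*(Y_v)\big)$, which is precisely the assertion of \thref{my.coho.iso}. I would remark in passing that \thref{limiting.values.decomp} guarantees the splitting $L_v = L_v^a \oplus L_v^r$, so that $L_v^a$ is genuinely a well-defined summand and the statement is not vacuous; this is why the argument can be run purely in terms of absolute limiting values.

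The step I expect to be the main obstacle is the verification that the collar analysis of $\overline u$ on $\widetilde X_v$ is legitimate, namely that $\overline u$ is honestly smooth across $Y_v$ (the metric on $\widetilde X_v$ is genuinely a product near $Y_v$ by the product-type hypothesis, so the doubled metric is smooth, and elliptic regularity applies), and that the trivial extension of $\overline u|_{X_v^+}$ over the infinite cylinder lands in $\dom \D_v^{ex}(\infty)$ in the sense of \thref{def:ext.gb} — this is where the vanishing of all $\lambda \neq 0$ components on the collar is crucial, since it is exactly what makes the extension locally $L^2$ and harmonic, with the $\lambda = 0$ part serving as the limiting value $\widehat u$. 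Once that bookkeeping is in place, the identification with elements of $\im\big(H^*(X_v)\to H^*(Y_v)\big)$ is immediate from the definition of $\gamma$.
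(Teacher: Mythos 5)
Your argument is essentially the paper's own: well-definedness and injectivity of $\gamma$ via Hodge theory, and surjectivity via the double $\widetilde{X}_v$, the Mayer--Vietoris sequence for the pair $(X_v^+,X_v^-)$, and the $j^*$-invariance of the unique harmonic representative, which kills the relative part and all $\lambda\neq 0$ modes on the collar and so produces an extended $L^2$ solution with limiting value $(u^0\big|_{Y_v},0)$. The only slip is in the injectivity step: the Hodge decomposition should be applied on the closed cross-section $Y_v$ (a harmonic form on $Y_v$ representing the zero class must vanish), not on $\overline{X_v}$, which is a manifold with boundary.
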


Notice the important construction carried out above: For any $u_a \in L_v^a$ there is an extended solution $u \in \ker\D_v^{ex}(\infty)$ so that $u = [u]_{ex}$ and $B(u) = (u_a,0)$. This again reflects the decomposition of $L_v$ as $L_v^a \oplus L_v^r$ and it will be of much use subsequently.

\bigskip\textbf{Vector-Valued \v{C}ech-de Rham Cohomology.} We will now use the structure of the underlying graph $\mathcal{G}=(V,E)$, in particular its cohomology, in order to encode the adiabatic behaviour of $\ker\D(r)$ by means of a short exact sequence. This will moreover motivate our definition of matching sets of extended harmonic forms. In this section it is important to note that we regard $\mathcal{G}$ as a set of vertices and edges; the realisation of $\mathcal{G}$ as a topological space will be denoted by $|\mathcal{G}|$.

First of all, suppose there are real vector spaces $A_v$, $B_e$ for all $v\in V$, $e\in E$ and linear maps $l_{v,e} : A_v \longrightarrow B_e$ for all half-edges $(v,e)$. We define $A = \bigoplus_v A_v$, $B = \bigoplus_e B_e$ and let $l = \sum l_{v,e} : A \longrightarrow B$ act component-by-component. Further, let $C_*(\mathcal{G})$, $C^*(\mathcal{G})$ be the simplicial (co-)homology groups of $\mathcal{G}$ and $\partial_\mathcal{G}$, $\partial_{\mathcal{G}}^*$ the respective (co-)boundary maps. The spaces
 \[ \begin{array}{ccl}
     C^0(\mathcal{G};A) &= &\big\{\, c_0 : C_0(\mathcal{G}) \longrightarrow A \:\big|\,
       c_0 \text{\, linear and \,} c_0(v) \in A_v \,\big\} \\[3pt]
     C^1(\mathcal{G};A) &= &\big\{\, c_1 : C_1(\mathcal{G}) \longrightarrow B \,\big|\,
       c_1 \text{\, linear and \,} c_1(e) \in B_e \;\big\}
 \end{array} \]
consist of \textbf{$\mathcal{G}$-cochains with values in $A$, $B$} and we want to fit them into a differential complex:

\begin{prop}\label{vector.coho} For $f \in C^0(\mathcal{G};A)$ let $\rho_\mathcal{G} f = l \circ f \circ \partial_\mathcal{G}$, e.g. $(\rho_\mathcal{G}f) (e) = l_{v^\prime,e}\big(f(v^\prime)\big) - l_{v,e} \big(f(v)\big)$. Then,
 \[ 0 \longrightarrow C^0(\mathcal{G};A) \xrightarrow{\;\rho_\mathcal{G}\;} C^1(\mathcal{G};B) \longrightarrow 0 \]
is a differential complex with \textbf{vector-valued cohomology groups} $H^0(\mathcal{G};A) = \ker \rho_\mathcal{G}$ and $H^1(\mathcal{G};B) = \coker \rho_\mathcal{G}$.
\end{prop}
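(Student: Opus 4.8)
The plan is essentially bookkeeping: since the sequence contains only one non-trivial arrow, being a differential complex is automatic, and the two cohomology groups are then read off directly. Concretely I would (i) check that $\rho_{\mathcal{G}}$ really maps $C^0(\mathcal{G};A)$ into $C^1(\mathcal{G};B)$, (ii) obtain the explicit edgewise formula for $\rho_{\mathcal{G}}f$ as a by-product of (i), and (iii) identify $\ker\rho_{\mathcal{G}}$ and $\coker\rho_{\mathcal{G}}$ with the cohomology at the two spots.

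For (i) and (ii): an $f\in C^0(\mathcal{G};A)$ is a linear map $C_0(\mathcal{G})\to A$ with $f(v)\in A_v$. Precomposing with the simplicial boundary $\partial_{\mathcal{G}}:C_1(\mathcal{G})\to C_0(\mathcal{G})$ and postcomposing with $l:A\to B$ yields a linear map $C_1(\mathcal{G})\to B$, so $\rho_{\mathcal{G}}f$ is at least a $B$-valued cochain. To see that its value on an edge $e=(v,v')$ lies in the summand $B_e$, write $\partial_{\mathcal{G}}e=v'-v$, so that $f(\partial_{\mathcal{G}}e)=f(v')-f(v)\in A$ has only its $v'$- and $v$-components nonzero, equal to $f(v')$ and $-f(v)$. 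Because $l$ acts component-by-component through the maps $l_{w,e'}:A_w\to B_{e'}$ and only the two half-edges incident to $e$ feed into the $B_e$-summand, the $e$-component of $l\big(f(\partial_{\mathcal{G}}e)\big)$ is $l_{v',e}\big(f(v')\big)-l_{v,e}\big(f(v)\big)$, which lies in $B_e$. This is exactly the asserted formula and shows $\rho_{\mathcal{G}}f\in C^1(\mathcal{G};B)$.

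For (iii): the sequence $0\to C^0(\mathcal{G};A)\xrightarrow{\rho_{\mathcal{G}}}C^1(\mathcal{G};B)\to 0$ is a cochain complex because each pair of consecutive arrows has one arrow equal to $0$, so every composite vanishes with nothing to verify. At the $C^0$-spot the cohomology is $\ker\rho_{\mathcal{G}}$ modulo the image of the incoming zero map, hence $H^0(\mathcal{G};A)=\ker\rho_{\mathcal{G}}$; at the $C^1$-spot it is all of $C^1(\mathcal{G};B)$ (the kernel of the outgoing zero map) modulo $\im\rho_{\mathcal{G}}$, hence $H^1(\mathcal{G};B)=\coker\rho_{\mathcal{G}}$.

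The only delicate point — the ``main obstacle'', such as it is — is consistency of conventions: one must fix the orientation of $\mathcal{G}$ (equivalently the sign conventions entering $\partial_{\mathcal{G}}$ and the numbers $\bullet_{(v,e)}$) so that the minus sign in $l_{v',e}(f(v'))-l_{v,e}(f(v))$ is precisely the one making $\rho_{\mathcal{G}}$ agree with $l\circ f\circ\partial_{\mathcal{G}}$, and so that in the scalar case $A_v=B_e=\R$, $l_{v,e}=\id$, one recovers the usual simplicial coboundary of $\mathcal{G}$. Once these conventions are pinned down, the whole proposition is a formal consequence of the definitions.
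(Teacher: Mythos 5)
Your verification is correct and coincides with what the paper intends: the proposition is stated without proof, being regarded as a formal consequence of the definitions, and your routine check (well-definedness of $\rho_{\mathcal{G}}$, the edgewise formula, and reading off $\ker\rho_{\mathcal{G}}$ and $\coker\rho_{\mathcal{G}}$ as the two cohomology groups of a two-term complex) is exactly that verification. Your reading of $(\rho_{\mathcal{G}}f)(e)$ as the $B_e$-component of $l\big(f(\partial_{\mathcal{G}}e)\big)$ — equivalently, as given by the displayed formula $l_{v^\prime,e}\big(f(v^\prime)\big)-l_{v,e}\big(f(v)\big)$ — is the correct interpretation of the paper's definition, and the sign convention you flag is fixed by the orientation of $\mathcal{G}$ via $\bullet_{(v,e)}$ exactly as you describe.
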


Now, we turn to specific choices of $A_v$, $B_e$ and $l_{v,e}$. Let $X^0$ be the disjoint union of the elements of an open cover of $X(r)$ consisting of slightly prolonged open vertex manifolds $\widehat{X}_v$. Then, for each $v \in V$ there is exactly one element of the cover containing $X_v(r)$; if $e$ connects $v$ with $v^\prime$ in $\mathcal{G}$, $Y_e$ is a deformation retract of $\widehat{X}_v \cap \widehat{X}_{v^\prime} = \widehat{Y}_e$; $\widehat{X}_v$, $\widehat{X}_{v^\prime}$ are disjoint in $X(r)$ if $v$ and $v^\prime$ are not adjacent in $\mathcal{G}$. Denote the disjoint union of the $\widehat{Y}_e$'s by $X^1$ and the exterior algebras of $X^0$, $X^1$ by $F^0$ respectively $F^1$, e.g.
 \[ C^\infty(F^0) = \bigoplus_{v \in V} C^\infty( \Lambda T^*\widehat{X}_v ) \;. \]

As described for instance in \cite[\S\S 1-2]{bt82}, the inclusions $X^1 \rightrightarrows X^0 \rightarrow X$ induce a short exact sequence of differential complexes
  \begin{equation}\label{F.complex}
    0 \longrightarrow C^\infty(F) \xrightarrow{\;\tau\;} C^\infty(F^0) \xrightarrow{\;\rho\;} C^\infty(F^1)
       \longrightarrow 0 \;,
  \end{equation}
where for $u \in C^\infty(F)$ we have $\tau u = \sum_{v\in V} u\big|_{\widehat{X_v}}$ and for $u \in C^\infty(F^0)$:
  \[ \rho u = \rho\left(\sum_{v\in V}u_v\right)
     = \sum_{e=(v,v^\prime)} u_{v^\prime}\big|_{\widehat{Y_e}} - u_{v}\big|_{\widehat{Y_e}} \]
\eqref{F.complex} is the \textbf{\v{C}ech-de Rham complex} for the cover $X^0$ of $X$. Since $\tau$ and $\rho$ commute with exterior differentiation this leads to a long exact sequence in cohomology
  \begin{equation}\label{gen.mv}
    \dotsm \xrightarrow{\;\rho^\sharp\;} H^{k-1}(X^1) \xrightarrow{\;b^\sharp\;} H^k(X)
    \xrightarrow{\;\tau^\sharp\;} H^k(X^0) \xrightarrow{\;\rho^\sharp\;} H^k(X^1) \longrightarrow \dotsm
  \end{equation}
to which we refer to as the \textbf{generalised Mayer-Vietoris sequence} of $X$ with respect to $\mathcal{G}$.

\bigskip The combinatorial structure of the cover $X^0$ may be encoded by a map
  \[ \mathcal{X} \;:\; X(r) \longrightarrow \mathcal{G} \;,\quad x \longmapsto \Bigg\{
     \begin{array}{cl} v\,, &x \in \overline{X_v} \\ e\,, &x = (y,t) \in Z_e(r), \;|t| < r \end{array} \]
which is onto and whose preimages cover the base $X(r)$. (This map motivates the notion of \textbf{manifolds fibred over graphs}.) The set of \emph{fibres} (i.e. preimages) of $\mathcal{X}$ is in one-to-one correspondence with $(X^0,X^1)$, each fibre being a deformation retract of a unique element of $X^0$ or $X^1$.

Since we defined $\mathcal{X}$ to map into $\mathcal{G}$ rather than $|\mathcal{G}|$, we cannot regard $\mathcal{X}$ as an actual fibration; even if we used $|\mathcal{G}|$, we could not lift arbitrary paths let alone homotopies. Nevertheless, we continue referring to the fibres of $\mathcal{X}$.

Using \thref{vector.coho}, we may efficiently rewrite sequence \eqref{gen.mv}, connecting it to the cohomology of $\mathcal{G}$. To do so, let $A_v = H^*(\widehat{X}_v)$, $B_e = H^*(\widehat{Y}_e)$ and $l_{v,e} : H^*(\widehat{X}_v) \longrightarrow H^*(\widehat{Y}_e)$ be induced by inclusion. Since the map $\mathcal{X}$ encodes not only the spaces $A$ and $B$ but also the graph $\mathcal{G}$ we replace any reference to $\mathcal{G}$ in vector-valued cohomology by reference to $\mathcal{X}$.

The defining condition for the spaces of $\mathcal{X}$-cochains $C^j(\mathcal{X})$ implies that $C^j(\mathcal{X}) \cong H^*(X^j)$, since it forbids interactions between objects defined on different vertices or edges. (This is of course essential for reformulating the Mayer-Vietoris sequence.) Moreover, the strong connection between $\rho_{\mathcal{X}} = \rho_{\mathcal{G}}$ and $\rho^\sharp$ should be obvious. In fact, using the isomorphisms
 \[ \hat{\imath}_j : H^*(X^j) \longrightarrow C^j(\mathcal{X}) \;,\;
    \hat{\imath}_j u = \left[ \;\sum_p c_p p \longmapsto \sum_p c_p u_p \;\right] \;, \]
with summation over the set of $j$-simplices $p$, we easily obtain $\hat{\imath}_1 \circ \rho^\sharp = \rho_{\mathcal{X}} \circ \hat{\imath}_0$. If we denote the cohomology groups $H^j\big(\mathcal{G};H^*(X^j)\big)$ by $H^j(\mathcal{X})$ for short, this in turn shows that there are induced isomorphisms
  \begin{equation}\label{ihat.isos}\begin{array}{ccccl}
    \hat{\imath}_0^\sharp &: &\im\tau^\sharp &\longrightarrow &H^0(\mathcal{X}) \\[3pt]
    \hat{\imath}_1^\sharp &: &\coker\rho^\sharp &\longrightarrow &H^1(\mathcal{X})
  \end{array} \end{equation}
and combining this with sequence \eqref{gen.mv}, we obtain:

\begin{prop} The following short sequence is exact:
 \begin{equation}\label{combined.ses}
   0 \xrightarrow{\qquad} H^1(\mathcal{X}) \xrightarrow{\; b^\sharp \circ \hat{\imath}_1^\sharp } H^*(X(r))
     \xrightarrow{\;\; \hat{\imath}_0^\sharp \circ \,\tau^\sharp \;} H^0(\mathcal{X}) \xrightarrow{\qquad} 0
  \end{equation}
\end{prop}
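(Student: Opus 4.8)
The plan is to obtain (\ref{combined.ses}) by the standard bookkeeping that collapses a long exact sequence into a short one, starting from the generalised Mayer--Vietoris sequence (\ref{gen.mv}) and feeding in the isomorphisms (\ref{ihat.isos}). Throughout I regard all cohomology as $\Z$-graded, with $\tau^\sharp$ and $\rho^\sharp$ degree-preserving and $b^\sharp$ the degree-raising connecting map; since (\ref{gen.mv}) is exact in each total degree it suffices to check exactness of (\ref{combined.ses}) degreewise and then pass to the direct sum over degrees (this is the only place the degree shift of $b^\sharp$ needs a word, as $H^*(X(r))$ in (\ref{combined.ses}) denotes the full graded space).

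First I would record that $\hat{\imath}_0$ and $\hat{\imath}_1$ intertwine $\rho^\sharp$ with $\rho_{\mathcal{X}}$, i.e.\ $\hat{\imath}_1\circ\rho^\sharp=\rho_{\mathcal{X}}\circ\hat{\imath}_0$, which was already noted above and ultimately rests on the identifications $C^j(\mathcal{X})\cong H^*(X^j)$ coming from the fact that each $\widehat{X}_v$, each $\widehat{Y}_e$, and each nonempty intersection in the cover $X^0$ retracts onto a vertex manifold or a cross-section. Since $\hat{\imath}_0,\hat{\imath}_1$ are isomorphisms, this relation forces $\hat{\imath}_0$ to carry $\ker\rho^\sharp$ bijectively onto $\ker\rho_{\mathcal{X}}=H^0(\mathcal{X})$ and $\hat{\imath}_1$ to carry $\coker\rho^\sharp$ bijectively onto $\coker\rho_{\mathcal{X}}=H^1(\mathcal{X})$; these are precisely the maps $\hat{\imath}_0^\sharp$ and $\hat{\imath}_1^\sharp$ of (\ref{ihat.isos}).

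Now the three exactness statements. Surjectivity at $H^0(\mathcal{X})$: by exactness of (\ref{gen.mv}) at $H^*(X^0)$ one has $\im\tau^\sharp=\ker\rho^\sharp$, and $\hat{\imath}_0^\sharp$ maps this isomorphically onto $H^0(\mathcal{X})$, so $\hat{\imath}_0^\sharp\circ\tau^\sharp$ is onto. Exactness at $H^*(X(r))$: since $\hat{\imath}_0^\sharp$ is injective on $\im\tau^\sharp$, $\ker(\hat{\imath}_0^\sharp\circ\tau^\sharp)=\ker\tau^\sharp$, and exactness of (\ref{gen.mv}) at $H^*(X)$ gives $\ker\tau^\sharp=\im b^\sharp$. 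Exactness at $H^1(\mathcal{X})$ and identification of the left-hand image: by exactness of (\ref{gen.mv}) at $H^*(X^1)$ we have $\ker b^\sharp=\im\rho^\sharp$, so $b^\sharp$ descends to an injection $\overline{b^\sharp}\colon\coker\rho^\sharp\hookrightarrow H^*(X(r))$ with image exactly $\im b^\sharp$; precomposing with $(\hat{\imath}_1^\sharp)^{-1}\colon H^1(\mathcal{X})\to\coker\rho^\sharp$ gives the first map of (\ref{combined.ses}) (this is the content of the slightly abusive notation $b^\sharp\circ\hat{\imath}_1^\sharp$ there), which is therefore injective with image $\im b^\sharp=\ker(\hat{\imath}_0^\sharp\circ\tau^\sharp)$. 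Assembling these three facts yields the exactness of (\ref{combined.ses}).

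I do not expect a genuine obstacle: the argument is a diagram chase assembling results already in hand. The only points needing a sentence of care are the intertwining relation for $\hat{\imath}_0,\hat{\imath}_1$ invoked above — which is where the hypothesis that $X^0$ is a good cover subordinate to $\mathcal{G}$ really enters — and the harmless observation that, because $b^\sharp$ shifts cohomological degree, exactness of (\ref{combined.ses}) should first be verified in each degree and then summed.
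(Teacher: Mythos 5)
Your argument is correct and is exactly the route the paper takes: the paper states the proposition as an immediate consequence of combining the isomorphisms \eqref{ihat.isos} with the long exact sequence \eqref{gen.mv}, and your write-up merely spells out the standard degreewise collapse (using $\im\tau^\sharp=\ker\rho^\sharp$, $\ker\tau^\sharp=\im b^\sharp$, $\ker b^\sharp=\im\rho^\sharp$ and the intertwining relation $\hat{\imath}_1\circ\rho^\sharp=\rho_{\mathcal{X}}\circ\hat{\imath}_0$) that the paper leaves implicit. Your remarks on the degree shift of $b^\sharp$ and on interpreting $b^\sharp\circ\hat{\imath}_1^\sharp$ via the induced map on $\coker\rho^\sharp$ are consistent with the paper's own remark following the proposition.
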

We ought to remark that in the second step of \eqref{combined.ses}, the degree is raised by $1$, i.e. a class $\alpha$ having values in $H^k(X^1)$ is mapped to a form on $X(r)$ of degree $k+1$.

\section{Matching Extended Harmonic Forms}\label{sec:match.ext}

Taking into account sequence \eqref{F.complex}, a set of extended harmonic forms $u = \sum_v u_v \in \bigoplus_v \ker\D_v^{ex}(\infty)$ should be called a matching set if it is an element of $\im \tau$. Since extended forms are defined on the stars $X_v(\infty)$ rather than being defined on $X^0$, this obviously does not make sense. But $\im \tau = \ker \rho$ and the map $\rho$ can easily be adapted to this situation.

Let $X(\infty)$ be the disjoint union of the stars $X_v(\infty)$, $F(\infty)$ be the corresponding exterior algebra and $\D^{ex}(\infty) : H^1_{ex}\big(F(\infty)\big) \rightarrow L^2_{ex}\big(F(\infty)\big)$ be defined component-by-component. Additionally, consider the \textbf{total boundary data map}
  \[ B : \ker\D^{ex}(\infty) \longrightarrow C^\infty(\widehat{F}) \oplus C^\infty(\widehat{F}) \quad,\quad
         \sum_v u_v \longmapsto \sum_{e=(v,v^\prime)} \big( B_{(v,e)}(u_v), B_{(v^\prime,e)} (u_{v^\prime}) \big) \]
and observe that its range consists of two copies of boundary data since data is given for each \emph{half-edge}. (This corresponds to incoming and outgoing limiting values, the choice depending on the orientation of $\mathcal{G}$.)

\begin{define} Let
  \begin{equation*}\begin{array}{rccl}
    \rho_{ex} : &\ker\D^{ex}(\infty) &\longrightarrow &C^\infty(\widehat{F}) \\[3pt]
                &\sum_v u_v          &\longmapsto
                     &\displaystyle\sum_{e=(v,v^\prime)} B_{(v^\prime,e)}(u_{v^\prime}) - B_{(v,e)}(u_v) \;,
  \end{array}\end{equation*}
where $B_{(v,e)}$ denotes the boundary data map corresponding to the half-edge $(v,e)$. We define the space of \textbf{matching sets of extended solutions} $\mathcal{W}$ and the space of \textbf{limiting values of matching sets} $\mathcal{L}$ by
  \begin{equation*}\begin{split}
    \mathcal{W} &:= \ker \rho_{ex} \subset \ker\D^{ex}(\infty) \\
    \mathcal{L} &:= \im \Big( \pi_1 : B(\mathcal{W}) \longrightarrow C^\infty(\widehat{F}) \Big) \;,
  \end{split}\end{equation*}
with the map $\pi_1$ defining $\mathcal{L}$ being projection onto the first factor (or equivalently, the second).
\end{define}

Thus, we call a set of extended harmonic forms a matching set if the asymptotics -- or limiting values -- on cylindrical parts corresponding to the same edge coincide. With regard to the decomposition $\ker\D_v^{ex}(\infty) \cong \ker\D_v(\infty) \oplus L_v$ it should be clear that a similar decomposition holds for matching sets as well. But this time, we may only take into account limiting values of matching sets.

\begin{prop} With $i$ denoting inclusion, the sequence
  \[ 0 \longrightarrow \ker\D(\infty) \xrightarrow{\quad i \quad} \mathcal{W}
       \xrightarrow{\;\pi_1 \circ B\;} \mathcal{L} \longrightarrow 0               \]
is exact. Moreover, it holds
  \begin{equation}\label{matching.limits.decomp}\begin{split}
    \mathcal{W} &\cong \ker\D(\infty) \oplus \mathcal{L} \\
          &\cong \ker\D(\infty) \oplus \mathcal{L}^a \oplus \mathcal{L}^r \;,
  \end{split}\end{equation}
in which $\mathcal{L}^a = (\pi_1 \circ B^a)(\mathcal{W})$ and $\mathcal{L}^r=(\pi_1 \circ B^r)(\mathcal{W})$ can be identified with subspaces of $\mathcal{W}$ consisting of matching sets with vanishing relative respectively absolute limiting values.
\end{prop}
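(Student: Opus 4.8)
The plan is to deduce everything directly from the definitions, using two ingredients already at hand: on a single star, the extended part $[u_v]_{ex}$ of an extended harmonic form is uniquely determined by its limiting value $B_v(u_v)$ (the remark preceding the corollary to \thref{exts.and.kers}); and the splitting $L_v = L_v^a \oplus L_v^r$ of \thref{limiting.values.decomp}, together with the construction carried out after \thref{my.coho.iso}, which for every $u_a \in L_v^a$ produces a pure extended solution $w \in \ker\D_v^{ex}(\infty)$ with $w = [w]_{ex}$ and $B_v(w) = (u_a,0)$ --- and, by $\ast L_v^a = L_v^r$, symmetrically for the relative part.

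First I would verify exactness. The map $i$ is well defined because an $L^2$ solution has vanishing limiting value on every cylindrical end and hence lies in $\ker\rho_{ex} = \mathcal{W}$; it is manifestly injective, and $\pi_1 \circ B$ is surjective onto $\mathcal{L}$ by the definition of $\mathcal{L}$. For exactness in the middle, observe that on $\mathcal{W}$ the defining relation $\rho_{ex}(u) = 0$ is precisely $B_{(v,e)}(u_v) = B_{(v^\prime,e)}(u_{v^\prime})$ for every edge $e = (v,v^\prime)$, so that $\pi_1 \circ B$ and $\pi_2 \circ B$ agree there; consequently $(\pi_1\circ B)(u) = 0$ forces $B_{(w,e)}(u_w) = 0$ for \emph{every} half-edge $(w,e)$, i.e. each $u_v$ has vanishing limiting value and is therefore an $L^2$ solution. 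Thus $\ker(\pi_1 \circ B) = \im i$.

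For the first line of \eqref{matching.limits.decomp} I would split each component as $u_v = [u_v]_{L^2} + [u_v]_{ex}$; since $[u_v]_{L^2} \in \ker\D_v(\infty)$ has zero limiting value, $B_{(v,e)}(u_v) = B_{(v,e)}([u_v]_{ex})$, so $\sum_v[u_v]_{ex}$ again satisfies the matching condition. This gives a basis-independent splitting of the exact sequence, whence $\mathcal{W} \cong \ker\D(\infty) \oplus \mathcal{W}_{ex}$, $\mathcal{W}_{ex}$ being the space of matching sets all of whose components are pure extended; and $\pi_1 \circ B$ restricts to an isomorphism $\mathcal{W}_{ex} \xrightarrow{\;\sim\;} \mathcal{L}$, because each pure extended component is determined by its limiting value and the limiting values of a matching set are pinned down edge-by-edge by a single representative. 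For the second line, note that under $\widehat{F}_e = \Lambda T^*Y_e \oplus \Lambda T^*Y_e$ the matching condition decouples into separate conditions on the absolute and relative limiting values; hence for $u \in \mathcal{W}_{ex}$ the families $\{B^a_v(u_v)\}_v$ and $\{B^r_v(u_v)\}_v$ are themselves matching, and the construction after \thref{my.coho.iso} realises each by a unique element of $\mathcal{W}_{ex}$ with vanishing relative, respectively absolute, limiting value. By uniqueness of the extended representative and $L_v = L_v^a \oplus L_v^r$ these two add up to $u$, so $\mathcal{W}_{ex} = \mathcal{W}^a \oplus \mathcal{W}^r$, and $\pi_1 \circ B^a$, $\pi_1 \circ B^r$ identify $\mathcal{W}^a$, $\mathcal{W}^r$ with $\mathcal{L}^a$, $\mathcal{L}^r$ as subspaces of $\mathcal{W}$. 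All the sums are honest algebraic direct sums of finite-dimensional spaces by the finiteness statements accompanying \thref{exts.and.kers}.

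The only place I expect to have to tread carefully is the half-edge and orientation bookkeeping --- the signs $\bullet_{(v,e)}$, the two copies in the range of $B$, incoming versus outgoing data: one must check that selecting the absolute (or relative) parts of a matching set again yields a matching set in the same indexing convention, and that the identification $\mathcal{W}_{ex} \cong \mathcal{L}$ is really the one induced by $\pi_1 \circ B$ (equivalently $\pi_2 \circ B$, the two coinciding on $\mathcal{W}$). None of this is deep, but it is where an error would most plausibly creep in.
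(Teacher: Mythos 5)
Your proposal is correct and takes essentially the same route as the paper: exactness read off from the definitions, the splitting $\mathcal{W} \cong \ker\D(\infty)\oplus\mathcal{L}$ via the $[\,\cdot\,]_{L^2}$/$[\,\cdot\,]_{ex}$ decomposition and the fact that limiting values determine extended parts, and the construction after \thref{my.coho.iso} combined with $L_v = L_v^a \oplus L_v^r$ for the refined splitting. The only cosmetic difference is that the paper produces the relative piece as the difference $\widetilde{w} = w - \overline{w} - [w]_{L^2}$ instead of realising it directly via the $\ast$-symmetry, so your extra appeal to $\ast L_v^a = L_v^r$ is harmless but not needed.
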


\begin{proof}It is evident that the sequence is exact, we need only prove \eqref{matching.limits.decomp}. Mapping to the $L^2$ part $[\,\cdot\,]_{L^2}$ is a left inverse for $i$, and since the limiting values uniquely determine the extended parts of matching sets, mapping to the corresponding extended part $[\,\cdot\,]_{ex}$ is a right inverse for $\pi_1 \circ B$. Then it is clear that
  \[ [\,\cdot\,]_2 \oplus \pi_1 \circ B : \mathcal{W} \longrightarrow \ker \D(\infty) \oplus \mathcal{L} \]
defines an isomorphism. (In fact, this map defines a splitting for the above sequence.)

Now let $w \in \mathcal{W}$ have limiting value $(w_a, w_r)$. Using the remark after \thref{my.coho.iso}, there is $\overline{w} \in \ker\D^{ex}(\infty)$ with limiting value $(w_a, 0)$ satisfying $[\overline{w}]_{L^2} = 0$. Defining $\widetilde{w} = w - \overline{w} - [w]_{L^2}$, we obtain two matching sets $\overline{w}$ and $\widetilde{w}$ with limiting values $(w_a,0)$ and $(0, w_r)$ so that $w = \overline{w} + \widetilde{w}$. Since these forms are uniquely determined by $w$, this gives \eqref{matching.limits.decomp}.
\qed\end{proof}

\bigskip\textbf{Matching Sets and Cohomology.} As a consequence of the construction used in the proof of \thref{my.coho.iso}, we are now able to relate subspaces of matching sets to the kernel and the image of the map $\rho^\sharp$. In the end, these considerations will allow us to make a connection between sequence \eqref{combined.ses} and the adiabatic limit of $\ker\D(r)$ as $r \rightarrow \infty$.

\begin{prop}\label{matching.vs.coho.1}There is an isomorphism of vector spaces
  \[ \alpha : \ker \D(\infty) \oplus \mathcal{L}^a  \longrightarrow  \ker \rho^\sharp \;, \]
hence equality of dimensions $\dim \ker \D(\infty) + \dim \mathcal{L}^a = \dim H^0(\mathcal{X})$ holds.
\end{prop}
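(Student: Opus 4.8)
The plan is to realise $\alpha$ through de Rham cohomology. The starting point is that every element of $\ker\D_v(\infty)$ and of $\ker\D_v^{ex}(\infty)$ is a smooth closed form (they are in the kernel of the relevant Hodge--Laplacian, by the results collected in Section \ref{sec:extendeds}), hence defines a class in $H^*(X_v(\infty))\cong H^*(\widehat{X}_v)$. I would first handle the summand $\ker\D(\infty)$. Writing $u_v=\sum_\lambda u_{v,\lambda}(0)\,e^{\lambda t}\pi^*\phi_\lambda$ on each cylindrical end (by \eqref{D.decomp} and \eqref{forms.expand}), square-integrability forces only exponentially decaying modes to appear; since the class of $u_v|_{Y_e\times\{t\}}$ is independent of $t$ while these restrictions converge to $0$ in $C^\infty(Y_e)$, that class vanishes, i.e.\ $i_v^\sharp[u_v]=0$ in $H^*(Y_v)$. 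As $Y_v=\bigcup_{e\sim v}Y_e$ is a disjoint union, this says precisely that $([u_v])_v$ lies in $\ker\rho^\sharp\subset H^*(X^0)$; moreover $u_v\mapsto[u_v]$ is the Atiyah--Patodi--Singer isomorphism $\ker\D_v(\infty)\cong\im\big(H_0^*(X_v(\infty))\to H^*(X_v(\infty))\big)$, whose image equals $\ker\big(i_v^\sharp:H^*(\widehat{X}_v)\to H^*(Y_v)\big)$ by the long exact sequence of the pair $(\overline{X_v},Y_v)$.

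For the summand $\mathcal{L}^a$ I would use the construction in the remark following \thref{my.coho.iso}: a matching family $w=(w_{a,v})_v$ of absolute limiting values is the limiting value of a unique matching set $\overline{w}=\sum_v\overline{w}_v$ with $B(\overline{w}_v)=(w_{a,v},0)$ and $[\overline{w}_v]_{L^2}=0$. On each end $\overline{w}_v$ equals an exponentially decaying $L^2$ part plus $\pi^*w_{a,v}$, so by the same reasoning as above the ($t$-independent) class of $\overline{w}_v|_{Y_e\times\{t\}}$ equals $[w_{a,v}|_{Y_e}]=\gamma(w_{a,v})|_{Y_e}$, with $\gamma$ the isomorphism of \thref{my.coho.iso}; that is, $i_v^\sharp[\overline{w}_v]=\gamma(w_{a,v})$. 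Since $\overline{w}$ is a matching set, the forms $w_{a,v}|_{Y_e}$ agree across each edge, hence so do their classes, and $([\overline{w}_v])_v\in\ker\rho^\sharp$ as well. I then define $\alpha(u,w):=\big([u_v]+[\overline{w}_v]\big)_v\in\ker\rho^\sharp$, which is manifestly linear.

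Injectivity: if $\alpha(u,w)=0$, applying $i_v^\sharp$ kills $[u_v]$ and leaves $\gamma(w_{a,v})=0$, so $w_{a,v}=0$ for all $v$ by injectivity of $\gamma$, whence $w=0$, then $\overline{w}_v=0$, then $[u_v]=0$, then $u_v=0$ by the Atiyah--Patodi--Singer isomorphism. Surjectivity: given $(\zeta_v)_v\in\ker\rho^\sharp$, set $\eta_v:=i_v^\sharp\zeta_v\in\im\big(H^*(\widehat{X}_v)\to H^*(Y_v)\big)$; the defining relation of $\ker\rho^\sharp$ says the $\eta_v$ agree on every $Y_e$. By \thref{my.coho.iso} each $\eta_v=\gamma(w_{a,v})$ for a unique $w_{a,v}\in L_v^a$, and because a harmonic form on the closed manifold $Y_e$ is the unique harmonic representative of its class, agreement of the $\eta_v$ on $Y_e$ upgrades to agreement of the forms $w_{a,v}|_{Y_e}$; hence $w:=(w_{a,v})_v\in\mathcal{L}^a$. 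Then $i_v^\sharp\big(\zeta_v-[\overline{w}_v]\big)=\eta_v-\gamma(w_{a,v})=0$, so $\zeta_v-[\overline{w}_v]\in\ker(i_v^\sharp)=\im\big(H_0^*(X_v(\infty))\to H^*(X_v(\infty))\big)$, which by Atiyah--Patodi--Singer is $[u_v]$ for a unique $u_v\in\ker\D_v(\infty)$; then $\alpha\big(\sum_v u_v,w\big)=(\zeta_v)_v$.

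Finally, exactness of the generalised Mayer--Vietoris sequence \eqref{gen.mv} gives $\ker\rho^\sharp=\im\tau^\sharp$, and the isomorphism $\hat{\imath}_0^\sharp$ of \eqref{ihat.isos} identifies $\im\tau^\sharp$ with $H^0(\mathcal{X})$; composing with $\alpha$ yields $\dim\ker\D(\infty)+\dim\mathcal{L}^a=\dim H^0(\mathcal{X})$. The step I expect to be the main obstacle is the bookkeeping that makes $\alpha$ well defined: verifying carefully that $L^2$ and extended $L^2$ harmonic forms on the stars really define de Rham classes on the $\widehat{X}_v$ with the stated restrictions to the cross-sections, and that ``matching of limiting values'' corresponds precisely---in both directions, the converse using Hodge theory on each closed $Y_e$---to the compatibility condition cutting out $\ker\rho^\sharp$. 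Everything after that is a diagram chase together with the dimension count just given.
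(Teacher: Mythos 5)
Your construction is essentially the paper's own proof: the paper likewise sends the $L^2$ part and the extended representative with limiting value $(w_a,0)$ to their de Rham classes in $H^*(X^0)$, checks the result lies in $\ker\rho^\sharp$, derives injectivity from the injectivity of $\gamma$ (Hodge theory) and surjectivity from the construction following \thref{my.coho.iso}, and concludes the dimension count via $\ker\rho^\sharp=\im\tau^\sharp\cong H^0(\mathcal{X})$. The single step you defer---that the extended part is actually closed, which does not follow merely from being in the kernel of the Laplacian on a non-compact star---is precisely what the paper supplies with its Green--Stokes computation on the truncations $X^0(s)$, so your outline is complete once that argument is inserted.
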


\begin{proof}To begin with, we decompose $u \in \ker \D(\infty) \oplus \mathcal{L}^a$ as $u = [u]_{L^2} + [u]_{ex}$. Since $[u]_{L_2}$ is $L^2$ harmonic, it is closed and defines a class $\xi = \sum_v \xi_v \in H^*(X^0)$. Notice that $i_v^\sharp \xi = [ \xi_v\big|_{Y_v} ] = 0$.

Now, since $u_a$ is closed and coclosed as a form on $Y$ and does not contain a factor $dt$, using a \emph{Green-Stokes Formula} (as for instance in \cite[p.160]{taylor1}) for the restrictions to the disjoint union $X^0(s) = \bigcup_v X_v(s)$, $s>0$, we can show that on any of the compact manifolds $X^0(s)$ we have $\big\|d[u]_{ex}\big\|_{L^2} = \big\|\delta [u]_{ex}\big\|_{L^2}$ and $d[u]_{ex} \perp \delta [u]_{ex}$. Since $\D^{ex}(\infty) [u]_{ex} = 0$, this shows
  \[ 0 = \big\| \D^{ex}(\infty) u\big|_{X^0(s)} \big\|_{L^2}^2 = \big\| du\big|_{X^0(s)} \big\|_{L^2}^2 + \big\| \delta u\big|_{X^0(s)} \big\|_{L^2}^2
       = 2 \big\| du\big|_{X^0(s)} \big\|_{L^2}^2 \]
and by continuity we conclude that $[u]_{ex}$ is closed as a form in $C^\infty\big(F^0(\infty)\big)$, hence defines a class $\eta \in H^*(X^0)$ which for $e = (v,v^\prime)$ satisfies $i^\sharp_{(v,e)}\eta_v = i^\sharp_{(v^\prime,e)} \eta_{v^\prime}$.

These arguments show that $u$ maps to a class $\alpha(u) = \xi + \eta \in H^*(X^0)$ and considering the way in which $\rho^\sharp$ acts we clearly obtain $\rho^\sharp \alpha(u) = 0$.

\bigskip Now suppose $\alpha(u) = 0$. Then $u$ is exact, but since $u$ has vanishing relative limiting value this implies $u_a = 0$ as well. Being harmonic and exact, the $L^2$ Hodge decomposition shows $u = 0$ and thus injectivity of $\alpha$. Using the construction referred to after \thref{my.coho.iso}, surjectivity of $\alpha$ follows immediately.
\qed\end{proof}

\begin{prop}\label{matching.vs.coho.2}When considered as subspaces of $H^*(Y)$, the inclusion $\mathcal{L}^r \subset \big( \im\rho^\sharp \big)^\perp$ holds, hence we have $\dim \mathcal{L}^r \le \dim H^1(\mathcal{X})$.
\end{prop}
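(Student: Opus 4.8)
The plan is to identify $\mathcal{L}^r$ with a concrete subspace of closed forms on $X^1$ and then use the Green-Stokes argument from the proof of \thref{matching.vs.coho.1}, together with the pairing between $H^*(Y)$ and itself coming from integration over $Y$, to show orthogonality to $\im\rho^\sharp$. First I would take $w \in \mathcal{W}$ with limiting value $(0,w_r)$, so that $w$ represents an element of $\mathcal{L}^r \cong \mathcal{L}^r \subset \mathcal{W}$; by the decomposition \eqref{matching.limits.decomp} and the remark after \thref{my.coho.iso} we may take $w = [w]_{ex}$, i.e. the $L^2$ part vanishes. The form $w_r \in \ker D_{Y_v}$ for each $v$, and by \thref{limiting.values.decomp} the relative limiting value $w_r$ determines a class $[\ast_Y w_r] = [w_r']$ of the same type as an absolute limiting value, so $w_r$ itself defines a class $[w_r] \in H^*(Y)$ via its harmonic representative on $Y$. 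The claim is that $[w_r]$, viewed in $H^*(Y) \cong C^1(\mathcal{X})$ (using the isomorphism $\hat{\imath}_1$), pairs to zero with every element of $\im\rho^\sharp$.

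The key computation runs as follows. Let $\xi = \sum_v \xi_v \in H^*(X^0)$ be arbitrary, with harmonic representative; then $\rho^\sharp \xi$ is represented by the collection $(\xi_{v'}\big|_{\widehat Y_e} - \xi_v\big|_{\widehat Y_e})_e$. I want to show $\inpro{[w_r]}{\rho^\sharp\xi}_Y = 0$, where the pairing is $\sum_e \int_{Y_e} (\cdot) \wedge \ast_{Y_e}(\cdot)$ or equivalently Poincaré pairing on $Y$. The idea is to realise both sides as boundary terms: by \thref{exts.and.kers}, $w$ restricted to $Z_{(v,e)}(\infty)$ is asymptotic to $\pi^*\widehat w$ with $\widehat w$ having only the relative component $w_r$ near each edge; on the other hand, pulling back a closed form $\xi_v$ from $\widehat X_v$ to $Z_{(v,e)}(\infty)$ and then to $Y_e$ gives precisely $i_{(v,e)}^\sharp \xi_v$. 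Applying Green-Stokes on the truncated star $X_v(s)$ to the pair $(w, \xi_v)$ — where $\xi_v$ is now genuinely defined on all of the star by extending the harmonic representative, or better, working on $\overline{X_v}$ directly — and summing over $v$, the interior integrals cancel because $dw=0$ (shown exactly as in \thref{matching.vs.coho.1}) and $d\xi_v = 0$, leaving only boundary integrals over $Y_v$. The matching condition $\rho_{ex}(w) = 0$ is vacuous here since all of $\mathcal{L}^r$ consists of matching sets by construction, but the crucial point is that the boundary contribution of $w$ is governed by its relative part $w_r$ (the absolute part vanishes), so the sum of boundary integrals equals exactly the pairing $\pm\inpro{w_r}{i_{v'}^\sharp\xi_{v'} - i_v^\sharp\xi_v}_{Y_e}$ summed over edges — which is $\inpro{[w_r]}{\rho^\sharp\xi}_Y$ up to sign. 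Since $w$ is closed and coclosed on each star and $\xi_v$ is closed, this total is zero, giving $[w_r] \perp \im\rho^\sharp$.

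I expect the main obstacle to be the bookkeeping of signs and the precise identification of the boundary pairing: one must be careful that restricting $w$ to $\partial X_v(s)$ picks up $w^0(s) + w^1(s)\wedge dt$, but pulling back along the inclusion kills the $dt$-term, so at first glance only the absolute part survives in the naive boundary restriction — yet $w$ has vanishing \emph{absolute} limiting value. The resolution is that the Green-Stokes formula produces $\int_{\partial X_v(s)} w \wedge \ast\xi_v$ (not $\int w^0 \wedge \ast_Y \xi_v^0$), and $\ast$ on the cylinder interchanges absolute and relative components (cf. the relations for $\ast$ in the proof of \thref{D.on.cylinders}), so $w^1\wedge dt$ paired against $\ast\xi_v$ precisely extracts $\inpro{w_r}{\xi_v^0\big|_{Y_e}}_{Y_e}$ in the limit $s\to\infty$, using that $\xi_v^0\big|_{Y_e} = i_{(v,e)}^\sharp\xi_v$. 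Handling this limit requires the exponential decay of $w - \pi^*\widehat w$ on the ends (from \thref{exts.and.kers} and the spectral gap of $\widehat D_e$ away from $0$), ensuring the non-constant modes contribute nothing in the limit. Once the boundary pairing is correctly identified as $\inpro{[w_r]}{\rho^\sharp\xi}_Y$, vanishing is immediate, and the dimension bound $\dim\mathcal{L}^r \le \dim H^1(\mathcal{X}) = \dim\coker\rho^\sharp = \dim(\im\rho^\sharp)^\perp$ follows.
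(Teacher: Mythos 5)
Your overall strategy --- realising the pairing of $w_r$ with $\im\rho^\sharp$ as the limit of boundary terms in a Green--Stokes identity on the truncated stars --- can be made to work, but the central computation as you describe it fails, for a concrete reason: you put the Hodge star on the wrong factor. In the boundary term you write, $\int_{\partial X_v(s)} i^*\big( w\wedge\ast\xi_v \big)$, the pullback $i^*$ annihilates every $dt$-component of the integrand, so the normal part $w^1\wedge dt$ of $w$ contributes nothing; what survives is $w^0\wedge(\ast\xi_v)^0 = \pm\, w^0\wedge\ast_Y\xi_v^1$, and since the absolute limiting value of $w$ vanishes this boundary term tends to $0$. Your claimed mechanism (that $w^1\wedge dt$ paired against $\ast\xi_v$ extracts $\inpro{w_r}{\xi_v^0}$) is therefore not what the formula gives; with your orientation the argument only reproduces the trivial identity $\inpro{w}{\delta\xi_v}=0$. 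Relatedly, the interior terms of Green--Stokes are always one $d$-term and one $\delta$-term, so your stated reasons ($dw=0$ and $d\xi_v=0$) never kill both: with your assignment you would need $\delta\xi_v=0$, which fails for a general closed representative, and a \emph{harmonic representative} on a manifold with boundary requires boundary conditions you have not specified. The repair is to apply the formula the other way round: take $\alpha=\xi_v$, $\beta=w$, so the interior terms are $\inpro{d\xi_v}{w}$ and $\inpro{\xi_v}{\delta w}$, which vanish because $\xi_v$ is closed and the extended harmonic form $w$ is coclosed; the boundary density is then $i^*(\xi_v\wedge\ast w)$, and since $\ast(w^1\wedge dt)=\pm\ast_Y w^1$ is tangential (cf. the relations in the proof of \thref{D.on.cylinders}), its limit as $s\to\infty$ is $\pm\,\xi_v^0\big|_{Y_e}\wedge\ast_Y w_r$, the decaying modes and the $L^2$ part dropping out. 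Summing the resulting per-vertex identities with the signs $\bullet_{(v,e)}$, and using that the matching condition makes $w_r$ the same at both ends of each edge (so this condition is not vacuous --- it is precisely what assembles the vertex contributions into a pairing with $\rho\xi$), then yields the desired orthogonality.

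For comparison, the paper's proof avoids Stokes and limits entirely: given $u\in\mathcal{L}^r$ it uses \thref{limiting.values.decomp} to produce a matching set $\overline{u}$ with limiting value $(\ast_Y u,0)$, writes $\rho w = i^*(gw)$ for a sign function $g$, and computes $\int_Y \rho w\wedge\ast_Y u = \int_Y i^*(w\wedge g\overline{u}) = \int_Y (i^*w)\wedge\rho\overline{u} = 0$ directly on $Y$. Your route, once corrected as above, would be a genuinely different and more analytic argument, but as written the key step does not go through.
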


\begin{proof}Let $w \in C^\infty(F^0)$ and $u \in \mathcal{L}^r$. Considering $u$ as an element of $H^*(Y)$ (thus forgetting the absolute component which equals $0$), by \thref{limiting.values.decomp} there is a unique form $\overline{u} \in \ker\D^{ex}(\infty)$ with limiting value $(\ast_Y u, 0)$. Now let $g : X^0 \rightarrow \R$ be any smooth function so that near $Y_e \subset \partial\widehat{X}_v$ we have $g = \bullet_{(v,e)}$ and let $i : Y \cup Y = \partial X^0 \hookrightarrow X^0$ denote inclusion. Then,
  \[ \rho w = \sum_v \Big( \sum_{e \sim v} \bullet_{(v,e)} w\big|_{Y_{(v,e)}} \Big) = i^* \big( g w \big) \]
which shows that $\rho w$ is the pull-back of a smooth form $gw \in C^\infty(F^0)$. Thus, we obtain
  \begin{equation*}\begin{split}
   \inpro{ \rho w }{ u }_{H^*(Y)} &= \int_Y \rho w \wedge \ast_Y u = \int_Y i^* \big( gw \wedge \overline{u} \big) \\
     &= \int_Y i^* \big( w \wedge g \overline{u} \big) = \int_Y (i^*w) \wedge \rho\overline{u} = 0 \;,
  \end{split}\end{equation*}
since $\overline{u} \in \ker\rho$. This shows $\mathcal{L}^r \subset \big( \im\rho^\sharp \big)^\perp \cong H^1(\mathcal{X})$.
\qed\end{proof}

\bigskip\textbf{Spectral Sequences.} There is another way of looking at the spaces $H^*(\mathcal{X})$ and sequence \eqref{combined.ses}. In \cite[pp. 166]{bt82} a spectral sequence for the \v{C}ech-de Rham complex is studied. A \textbf{spectral sequence} is a sequence of differential groups $(E_k,d_k)$ in which each group is the homology of its predecessor. In this case, we let $C^{p,q}(\mathcal{X})$ be the space of $\mathcal{X}$\hspace{-2pt}-cochains of degree $p$ with values in $H^q\big(\mathcal{X}^{-1}(\mathcal{G})\big)$ (since this cohomology is a direct sum with respect to $q$, any $\mathcal{X}$\hspace{-2pt}-cochain may decomposed into parts having values in the respective degree) and define
 \begin{alignat*}{3}
   E_1 &= \displaystyle\bigoplus_{p,q} E_1^{p,q}, &\quad&\text{where} &\quad
    &E_1^{p,q} = C^{p,q}(\mathcal{X}) .\\
   \intertext{Taking the homology with respect to $\rho_{\mathcal{X}}$ successively leads to the next term:}
   E_2 &= \displaystyle\bigoplus_{p,q} E_2^{p,q}, &&\text{where}
    &&E_2^{p,q} = H_{\rho_{\mathcal{X}}}E_1^{p,q} =
         \left\{\begin{array}{lcc} \ker\rho_{\mathcal{X}}\big|_{C^{0,q}(\mathcal{X})} &, &p=0 \\[4pt]
                                 \coker\rho_{\mathcal{X}}\big|_{C^{1,q}(\mathcal{X})} &, &p=1 \end{array}\right.
 \end{alignat*}
and $E_k = E_2$ for $k > 2$. Then, on the one hand it is plain to see that
  \[ H^p(\mathcal{X}) = \displaystyle\bigoplus_q E_2^{p,q} \;, \]
and on the other hand, since this spectral sequence converges to the (de Rham) cohomology of $X(r)$, we yet again obtain
  \[ H^*\big(X(r)\big) \cong E_2 = H^0(\mathcal{X}) \oplus H^1(\mathcal{X}) \;. \]
This may as well be compared to Mazzeo and Melrose's \textbf{Hodge-Leray spectral sequence} (cf. \cite{mm90}). In their work, a fibration $M \rightarrow N$ with typical fibre $F$ ($M$ and $F$ being compact manifolds) is considered and using a parameter $x$ the Riemannian manifold $N$ is stretched as $x \rightarrow 0$ and the adiabatic behaviour of the spaces of harmonic forms on this stretched manifold is studied. Mazzeo and Melrose analytically define Hodge theoretic analogues $E_N^k$ of the terms $E_k$ and show that this sequence converges to a space $E_\infty^k$ which satisfies
  \begin{equation}\label{mm.iso} E_\infty^k \otimes \mathbb{L} \cong H^k(M)
    \otimes \mathbb{L} \qquad\text{(as $\mathbb{L}$-modules)} \;,
  \end{equation}
where $\mathbb{L}$ denotes the ring of formal Laurent series in $x$. In fact, Mazzeo and Melrose's terms $E^k_N$ correspond to the terms $\oplus_{q-p=k} E^{p,q}_N$ for $N=1,2$ as defined above. The important difference being that in our case the base is one dimensional, hence the sequence $E_j$ becomes stationary in the second step and that we actually don't have a fibration. However, we rely heavily on certain assumptions on the geometry of $X$ (e.g. the product structure near cylindrical parts) -- these are not made in \cite{mm90}.

The isomorphism \eqref{mm.iso} is of course a more general result than what we aim at, since the base $N$ may be any smooth Riemannian manifold (in particular of dimension $n > 1$), but our result includes the case of the base being singular in a certain way (i.e. being a graph).

\section{The Splicing Construction}\label{sec:splicing}

Now we will define a variant of the \textbf{Cappell-Lee-Miller splicing map}, adapted to the situation of multiple edges. It is a linear map $\mathcal{S}_r : \mathcal{W} \longrightarrow C^\infty\big(F(r)\big)$, gluing a set of matching solutions to form an approximately harmonic form on $X(r)$. We will show that, if composed with a suitable spectral projection $\Pi_r$, splicing defines an isomorphism $\mathcal{W} \cong \ker D(r)$. Our approach follows the lines of \cite{clm1}, but is also influenced by the work of Nicolaescu \cite{nicolaescu02}.

To begin with, we recall that even though $X_v(r)$ can be considered a submanifold of $X_v(\infty)$, the normal coordinate $t$ on the cylindrical parts has a different range. In order to keep track of the range of the normal coordinate for restrictions of the type
  \[ u\big|_{X_v(r)} \quad\text{where}\quad u \in L^2_{ex}\big(F(\infty)\big) \]
we define a new coordinate $\vartheta = \bullet t + r$. (See \eqref{bullet} for the definition of $\bullet$.) A change of variables from $\vartheta$ to $t$ coordinates will be expressed by a pull-back $\vartheta^*$ (and often used implicitly). Also, we will make use of a cut-off function
  \[ g_r : \R_+ \longrightarrow [0,1] \quad,\quad
         g_r(\vartheta) = \left\{\begin{array}{ccl} 1 &, &\vartheta \le r-\frac{3}{4} \\
                                                 0 &, &\vartheta \ge r-\frac{1}{4} \end{array}\right. \]
with $|\partial_\vartheta g_r|\le 4$, extended to $X(\infty)$ in the obvious way.

\begin{define} Let $w=(w_v) \in \mathcal{W}$ be a matching set of harmonic forms with limiting value $\widehat{w}=(\widehat{w}_e) \in \mathcal{L}$. We define smooth forms by
  \begin{equation*}\begin{array}{lcl}
    S_r^v(w) \in C^\infty\big(F_v(r)\big) &, &S_r^v(w) = \vartheta^*\big( g_r \cdot w_v \big) \\[3pt]
    S_r^e(w) \in C^\infty\big(\pi^*\widehat{F}_e\big) &, &S_r^e(w) = \big( 1 - \vartheta^*g_r )
     \pi^*\widehat{w}_e \quad.
  \end{array}\end{equation*}
Then, the \textbf{Cappell-Lee-Miller splicing map for graphs} is defined by
  \[ \mathcal{S}_r : \mathcal{W} \longrightarrow C^\infty\big(F(r)) \quad,\quad
     \mathcal{S}_r(w) = \sum_{v \in V} S_r^v(w) + \sum_{e \in E} S_r^e(w) \;, \]
where the single summands are assumed to vanish outside their respective domain, e.g. $S_r^v(w)=0$ on $X_{v^\prime}(r)$, $v\neq v^\prime$.
\end{define}

\bigskip\textbf{On Projected Splicing.} Let $\spa(r,R)$ be the subspace of $L^2\big(F(r)\big)$ spanned by eigenforms of $\D(r)$ corresponding to eigenvalues $|\mu| \le R$. Throughout the rest of this article we will denote the smallest spectral gap of the operators $\widehat{D}_e$ by $\lambda_0$, i.e.
  \[ 0 < \lambda_0 := \min_{e \in E} \lambda_e , \quad\text{where}\quad
     \lambda_e := \inf\big\{ \; |\lambda| \; \big| \; 0 \neq \lambda \in \spec(\widehat{D}_e) \; \big\} \;. \]
Even though splicing does not yield harmonic forms nor eigenforms (since $\D(r)$ also acts on $\vartheta^*g_r$), it is true (cf. \cite[4.1]{clm1}) that spliced forms are exponentially close to linear combinations of eigenforms corresponding to very small eigenvalues. (We refer to eigenvalues of $\D(r)$ tending to $0$ exponentially in $r$ as \textbf{very small eigenvalues}.)

\begin{prop}[Cappell-Lee-Miller]\label{splicing.is.mono}Let $\Pi_r$ be orthogonal projection of $L^2\big(F(r)\big)$ onto $\spa\big(r,e^{-\frac{\lambda_0 r}{4}}\big)$. Then, there is $R \ge 2$ such that for all $r \ge R$, the \textbf{projected splicing map}
  \begin{equation*}
    \Pi_r \circ \mathcal{S}_r : \mathcal{W} \longrightarrow \spa\big(r,e^{-\frac{\lambda_0 r}{4}}\big)
  \end{equation*}
is injective. Moreover, for $w\in \mathcal{W}$ we have
  \begin{equation*}
    \big\| \big( \Pi_r \circ \mathcal{S}_r - \mathcal{S}_r \big)(w) \big\|_{L^2}
      \le e^{-\frac{\lambda_0 r}{4}} \big\| \mathcal{S}_r(w) \big\|_{L^2} \;.
  \end{equation*}
\end{prop}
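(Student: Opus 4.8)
The plan is to read off both assertions from a single estimate — that $D(r)\mathcal{S}_r(w)$ is exponentially small compared with $\mathcal{S}_r(w)$ — together with the spectral calculus of the self-adjoint operator $\D(r)$. Write $\Pi_r^\perp=\id-\Pi_r$, so that $\Pi_r^\perp\mathcal{S}_r(w)$ is a combination of eigenforms of $\D(r)$ with $|\mu|>e^{-\lambda_0 r/4}$. Since $\D(r)$ preserves its spectral subspaces, $D(r)\mathcal{S}_r(w)=D(r)\Pi_r\mathcal{S}_r(w)+D(r)\Pi_r^\perp\mathcal{S}_r(w)$ is an $L^2$-orthogonal decomposition, whence
\[ e^{-\frac{\lambda_0 r}{4}}\,\big\|(\Pi_r\circ\mathcal{S}_r-\mathcal{S}_r)(w)\big\|_{L^2}
   = e^{-\frac{\lambda_0 r}{4}}\,\big\|\Pi_r^\perp\mathcal{S}_r(w)\big\|_{L^2}
   \le \big\|D(r)\Pi_r^\perp\mathcal{S}_r(w)\big\|_{L^2}
   \le \big\|D(r)\mathcal{S}_r(w)\big\|_{L^2}. \]
Everything therefore reduces to an estimate of the form $\|D(r)\mathcal{S}_r(w)\|_{L^2}\le c\,e^{-\lambda_0 r}\,\|\mathcal{S}_r(w)\|_{L^2}$ for all $w\in\mathcal{W}$ and $r\ge 2$, with $c$ independent of $w$ and $r$: then $e^{\lambda_0 r/4}\cdot c\,e^{-\lambda_0 r}=c\,e^{-3\lambda_0 r/4}\le e^{-\lambda_0 r/4}$ for $r$ beyond some $R\ge 2$, which is the norm estimate; and if $\Pi_r\mathcal{S}_r(w)=0$, that estimate reads $\|\mathcal{S}_r(w)\|_{L^2}\le e^{-\lambda_0 r/4}\|\mathcal{S}_r(w)\|_{L^2}$, forcing $\mathcal{S}_r(w)=0$ and hence (by the last step below) $w=0$, so $\Pi_r\circ\mathcal{S}_r$ is injective.

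For the displayed estimate, observe that off the annular regions $\{\,r-\tfrac{3}{4}\le\vartheta\le r-\tfrac{1}{4}\,\}$ surrounding the $Y_e$, the form $\mathcal{S}_r(w)$ agrees with $w_v$ or with $\pi^*\widehat{w}_e$, both annihilated by $D(r)$: the former because $w_v\in\ker\D_v^{ex}(\infty)$, the latter because $\pi^*\widehat{w}_e$ is $t$-independent with $\widehat{w}_e\in\ker\widehat{D}_e$, so $D(r)\pi^*\widehat{w}_e=0$ by \eqref{D.decomp}. On each annular region, writing $\mathcal{S}_r(w)=\vartheta^*g_r\cdot(w_v-\pi^*\widehat{w}_e)+\pi^*\widehat{w}_e$ with $\widehat{w}_e$ the \emph{common} limiting value $B_{(v,e)}(w_v)=B_{(v',e)}(w_{v'})$ along $e$ — this is exactly the matching hypothesis $w\in\mathcal{W}=\ker\rho_{ex}$, and it is what makes \emph{both} $w_v-\pi^*\widehat{w}_e$ and $w_{v'}-\pi^*\widehat{w}_e$ small there — and using $D(r)(w_v-\pi^*\widehat{w}_e)=0$ on the cylinder, one gets
\[ D(r)\mathcal{S}_r(w) = \big[D(r),\vartheta^*g_r\big]\,(w_v-\pi^*\widehat{w}_e),\qquad
   \big[D(r),\vartheta^*g_r\big]u = d(\vartheta^*g_r)\wedge u - \iota_{d(\vartheta^*g_r)}u, \]
on $\{\,r-\tfrac{3}{4}\le\vartheta\le r-\tfrac{1}{4}\,\}$. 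The commutator is zeroth order with pointwise operator norm $|d(\vartheta^*g_r)|=|\partial_\vartheta g_r|\le 4$ (the principal symbol of $D(r)$ being $|\xi|$ times an isometry). By the definition of extended $L^2$ forms and \thref{exts.and.kers}, on $Z_{(v,e)}(\infty)$ the difference $w_v-\pi^*\widehat{w}_e$ is a superposition of $\widehat{D}_e$-eigenmodes with nonzero eigenvalue and so decays no slower than $e^{-\lambda_0\vartheta}$ as $\vartheta\to\infty$; evaluated on the annuli this yields $\|w_v-\pi^*\widehat{w}_e\|_{L^2(Y_e\times[r-\frac{3}{4},r-\frac{1}{4}])}\le c_1\,e^{-\lambda_0 r}\|w\|_{\mathcal{W}}$ for a fixed norm $\|\cdot\|_{\mathcal{W}}$ on the finite-dimensional space $\mathcal{W}$, with $c_1$ uniform in $w$ and $r$. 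Summing over the finitely many half-edges, $\|D(r)\mathcal{S}_r(w)\|_{L^2}\le c_1'\,e^{-\lambda_0 r}\|w\|_{\mathcal{W}}$.

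It remains to bound $\|\mathcal{S}_r(w)\|_{L^2}$ from below by $\|w\|_{\mathcal{W}}$, uniformly in $r$. Since $\vartheta^*g_r\equiv 1$ on $\overline{X_v}$ and the edge terms $S_r^e(w)$ are supported in the cylinders, $\mathcal{S}_r(w)$ restricts to $w_v\big|_{\overline{X_v}}$ on each $\overline{X_v}$, so $\|\mathcal{S}_r(w)\|_{L^2}\ge\big(\sum_v\|w_v|_{\overline{X_v}}\|_{L^2}^2\big)^{1/2}$. The linear map $w\mapsto(w_v|_{\overline{X_v}})_{v\in V}$ on $\mathcal{W}$ is injective — if $w_v|_{\overline{X_v}}=0$ then $w_v\equiv 0$ by unique continuation for the harmonic form $w_v$, hence $w=0$ — and, $\mathcal{W}$ being finite dimensional, it is bounded below, say $\|\mathcal{S}_r(w)\|_{L^2}\ge c_2\|w\|_{\mathcal{W}}$ with $c_2>0$ independent of $r$. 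Combining with the previous paragraph gives the displayed estimate with $c=c_1'/c_2$, which, as explained, yields the norm bound for $r\ge R$; and in the injectivity step, $\mathcal{S}_r(w)=0$ likewise forces $w_v|_{\overline{X_v}}=0$ for all $v$, hence $w=0$.

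The genuinely delicate points are the two uniform-in-$w$ inputs. First, the exponential decay $\|w_v-\pi^*\widehat{w}_e\|_{L^2(Y_e\times[a,a+1])}\lesssim e^{-\lambda_0 a}\|w\|_{\mathcal{W}}$: one must upgrade the purely qualitative statement ``$w_v|_{Z_v(\infty)}-\pi^*\widehat{w}_v\in L^2$'' from the definition of extended forms to a quantitative rate, and that rate is dictated precisely by the cross-sectional spectral gap $\lambda_0$ — this is where the expansion \eqref{forms.expand} into $\widehat{D}_e$-eigenmodes and the definition of $\lambda_0$ are used. Second, the lower bound $\|\mathcal{S}_r(w)\|_{L^2}\gtrsim\|w\|_{\mathcal{W}}$ with a constant that does not degenerate as $r\to\infty$; here the rescue is that $\mathcal{W}$ is finite dimensional and a harmonic extended form is determined by its restriction to $\overline{X_v}$. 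The computation of $D(r)\mathcal{S}_r(w)$ itself is routine once one records that $[D(r),\vartheta^*g_r]$ is genuinely zeroth order with norm $\lesssim|\partial_\vartheta g_r|$. All of this is carried out by Cappell--Lee--Miller in the one-edge case \cite[4.1]{clm1}; the passage to finitely many edges only inserts a finite sum over half-edges.
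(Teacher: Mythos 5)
Your proposal is correct, and it is essentially the argument the paper itself points to: the paper does not prove this proposition but defers to \cite[4.1]{clm1}, and your proof reconstructs that standard Cappell--Lee--Miller reasoning --- $D(r)\mathcal{S}_r(w)$ is supported in the transition annuli and there equals the commutator $[D(r),\vartheta^*g_r]$ applied to $w_v-\pi^*\widehat{w}_e$, which decays like $e^{-\lambda_0 r}$ by the mode expansion of extended solutions (\thref{exts.and.kers}), while $\|\mathcal{S}_r(w)\|_{L^2}$ is bounded below uniformly in $r$ via the restrictions to the compact cores $\overline{X_v}$ and finite-dimensionality of $\mathcal{W}$; the spectral theorem for $\D(r)$ then yields both the norm estimate and injectivity. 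The two uniformity points you flag (quantitative $e^{-\lambda_0\vartheta}$ decay and the $r$-independent lower bound) are indeed the only delicate inputs, and you handle both correctly.
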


\section{Exact Statement of the Main Result}\label{sec:main}

Having introduced matching sets of harmonic forms and the splicing construction, we formulate the theorem we aim at in an exact manner.

\begin{theorem}\label{main.exact} Let $\varepsilon > 0$ and $X$ be a manifold fibred over a graph $\mathcal{G}$. Then, there is $r_0>0$ such that for $r > r_0$:
 \begin{enumerate}
  \item There are no non-zero eigenvalues $\mu$ of $D(r)$ satisfying $|\mu| < r^{-(1+\varepsilon)}$.
  \item With $\Pi_{\ker}$ denoting the orthogonal projection onto $\ker \D(r)$, the projected splicing map
          \[ \Pi_{\ker} \circ \mathcal{S}_r : \mathcal{W} \longrightarrow \ker \D(r) \]
        is an isomorphism of matching sets of extended harmonic forms on $X(\infty)$ and harmonic forms on $X(r)$.
  \item Any spliced form is exponentially close to its projection onto the space of harmonic forms,
          \[ \big\| \big( \Pi_{\ker} \circ \mathcal{S}_r - \mathcal{S}_r \big) (u) \big\|_{L^2}
               \le e^{-\frac{\lambda_0 r}{4}} \big\| \mathcal{S}_r (u) \big\|_{L^2}  \;.\]
  \item The topological representation of $H^*(X)$ by means of a \v{C}ech-de Rham complex gives the asymptotics of the
        Hodge cohomology of $X(r)$ as $r \rightarrow \infty$:
          \[ \begin{diagram}
             \node{0} \arrow{e} \node{H^1(\mathcal{X})} \arrow{e} \node{H^*(X)}
              \arrow{e} \node{H^0(\mathcal{X})} \arrow{e} \node{0} \\
             \node{0} \arrow{e} \node{\mathcal{L}^r} \arrow{e}\arrow{n,l}{\hat{\imath}} \node{\mathcal{W}}
              \arrow{e} \arrow{n} \node{\ker\D(\infty)\oplus\mathcal{L}^a} \arrow{e}\arrow{n,r}{\alpha} \node{0}
          \end{diagram} \]
        Here, the vertical arrows represent isomorphisms and the rows are exact.
 \end{enumerate}
\end{theorem}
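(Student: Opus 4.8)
The four items are not independent: items (i) and (ii) together are the analytic heart, item (iii) is essentially the second estimate already recorded in Proposition~\ref{splicing.is.mono}, and item (iv) is a bookkeeping consequence of the topological results of Sections~\ref{sec:gen.mv}--\ref{sec:match.ext} once (ii) is known. The plan is to establish a chain of dimension counts that forces all the maps in sight to be isomorphisms, and then to upgrade this to the sharp eigenvalue statement in (i).

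First I would collect the dimension inequalities already available. From Proposition~\ref{splicing.is.mono}, $\Pi_r \circ \mathcal{S}_r$ is injective for $r \ge R$, so $\dim \mathcal{W} \le \dim \spa(r, e^{-\lambda_0 r/4})$. From the exact sequence of Proposition~\ref{vector.coho}-type bookkeeping, $\dim \mathcal{W} = \dim \ker\D(\infty) + \dim \mathcal{L}^a + \dim \mathcal{L}^r$, and by Propositions~\ref{matching.vs.coho.1} and \ref{matching.vs.coho.2} this equals $\dim H^0(\mathcal{X}) + \dim \mathcal{L}^r$ with $\dim \mathcal{L}^r \le \dim H^1(\mathcal{X})$; hence $\dim \mathcal{W} \le \dim H^0(\mathcal{X}) + \dim H^1(\mathcal{X}) = \dim H^*(X(r))$ by Proposition~\ref{combined.ses}-exactness (the spectral-sequence remark). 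On the other hand, $\ker\D(r) \subseteq \spa(r,e^{-\lambda_0 r/4})$ trivially, and $\dim\ker\D(r) = \dim H^*(X(r))$ by Hodge theory on the closed manifold $X(r)$. So I would aim to prove the two-sided squeeze
\[
 \dim H^*(X(r)) = \dim\ker\D(r) \le \dim\mathcal{W} \le \dim H^*(X(r)),
\]
the missing inequality being $\dim \ker\D(r) \le \dim\mathcal{W}$. The natural way to get it is to produce, for $r$ large, an injection $\ker\D(r) \hookrightarrow \mathcal{W}$ — or more cleanly, to show that $\Pi_{\ker}\circ\mathcal{S}_r$ is \emph{surjective}. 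This surjectivity is exactly the distance argument promised in the introduction: given a genuine harmonic form $h$ on $X(r)$, its restriction to the cylinders $Z_e(r)$ is, up to an $L^2$-small and exponentially-decaying-coefficient error, of the form $\sum_\lambda h_\lambda(0) e^{\lambda t}\pi^*\phi_\lambda$ with the constant ($\lambda = 0$) part dominating; cutting off on each $X_v(r)$ by $g_r$ produces a candidate matching set $w = w(h) \in \bigoplus_v \ker\D_v^{ex}(\infty)$, whose components match because $h$ was a single smooth form on $X(r)$, and one estimates $\|\mathcal{S}_r(w(h)) - h\|_{L^2} \le C e^{-\lambda_0 r/4}\|h\|_{L^2}$ using Proposition~\ref{D.on.cylinders} and the exponential decay of the non-constant modes. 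Feeding this through $\Pi_{\ker}$ and Proposition~\ref{splicing.is.mono} shows $\Pi_{\ker}\circ\mathcal{S}_r$ is within operator-norm $< 1$ of a map with dense image in $\ker\D(r)$, hence onto; combined with the injectivity from Proposition~\ref{splicing.is.mono} (note $\ker\D(r) \subseteq \spa(r,e^{-\lambda_0 r/4})$, so injectivity is inherited) this gives item (ii), and the squeeze above then forces $\dim\mathcal{L}^r = \dim H^1(\mathcal{X})$ and $\dim(\ker\D(\infty)\oplus\mathcal{L}^a) = \dim H^0(\mathcal{X})$, so the inclusions of Propositions~\ref{matching.vs.coho.1} and \ref{matching.vs.coho.2} are equalities — which is precisely item (iv), the vertical maps $\alpha$ and $\hat\imath$ being isomorphisms by construction and the diagram commuting because all maps are the natural ones.

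Item (iii) is then immediate from the second inequality in Proposition~\ref{splicing.is.mono} once (ii) identifies $\Pi_r$ restricted to the relevant range with $\Pi_{\ker}$; concretely, once item (i) is in hand the only eigenvalue of $\D(r)$ in $[-e^{-\lambda_0 r/4}, e^{-\lambda_0 r/4}]$ is $0$, so $\spa(r,e^{-\lambda_0 r/4}) = \ker\D(r)$ and $\Pi_r = \Pi_{\ker}$. It remains to prove item (i), the upgrade from "very small eigenvalues are $o(1/r)$-separated from genuine spectrum" (which is what Proposition~\ref{splicing.is.mono} morally encodes) to "no non-zero eigenvalue below $r^{-(1+\varepsilon)}$". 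Here I would argue as follows: the dimension squeeze shows $\dim\spa(r,e^{-\lambda_0 r/4}) = \dim\ker\D(r)$, so there are \emph{no} non-zero eigenvalues $\mu$ with $|\mu| \le e^{-\lambda_0 r/4}$ at all — in particular none with $|\mu| < r^{-(1+\varepsilon)}$, since $r^{-(1+\varepsilon)} \le e^{-\lambda_0 r/4}$ for $r$ large. So item (i) actually follows from the same squeeze, for the generous window $e^{-\lambda_0 r/4}$, and the stated polynomial bound $r^{-(1+\varepsilon)}$ is merely a weaker, cleaner-to-state consequence. I would still check carefully that the cut-off error estimate genuinely gives the exponential rate $e^{-\lambda_0 r/4}$ (the loss of a factor from $g_r'$ being supported in a unit-length collar, where the non-constant modes are already of size $e^{-\lambda_0 r}$ times something, so the product is controlled), since that is where the constant $\lambda_0$ and the $1/4$ enter.

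\textbf{Main obstacle.} The crux is the surjectivity (distance) argument for $\Pi_{\ker}\circ\mathcal{S}_r$: one must show that an arbitrary harmonic form on $X(r)$ is, modulo an exponentially small $L^2$ error, a spliced matching set. The delicate points are (a) that the truncation $w(h)$ of $h$ really lands in $\mathcal{W}$ — i.e. that the pieces $w(h)_v$ are genuinely \emph{extended} $L^2$ solutions on the stars $X_v(\infty)$, with matching limiting values inherited from the single-valuedness of $h$ across each $Y_e$ — and (b) the uniform-in-$r$ control of the constant $C$ in $\|\mathcal{S}_r(w(h)) - h\| \le Ce^{-\lambda_0 r/4}\|h\|$, which requires that the decomposition $h|_{Z_e(r)} = (\text{constant part}) + (\text{decaying part})$ has decaying part with $L^2$-norm exponentially small uniformly, a fact that one gets from expanding in the eigenbasis $\{\phi_\lambda\}$ of $\widehat D_e$ and using $\partial_t h_\lambda = \lambda h_\lambda$ together with the spectral gap $\lambda_0$ (Lemma~\ref{props.of.sigma.D.hat}). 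Everything else — the dimension counting, the passage $\Pi_r = \Pi_{\ker}$, the diagram in (iv) — is then formal.
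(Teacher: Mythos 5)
Your dimension-counting skeleton ($\dim\mathcal{W}\le\dim H^0(\mathcal{X})+\dim H^1(\mathcal{X})=\dim H^*(X(r))=\dim\ker\D(r)$, plus a reverse bound forcing everything to be an isomorphism, with (iii) and (iv) then formal) is exactly the paper's, but the step you supply for the reverse bound has a genuine gap. You propose to build, for each harmonic form $h$ on $X(r)$, a matching set $w(h)\in\mathcal{W}$ by cutting $h$ off with $g_r$ on each star. That truncation is not an extended $L^2$ solution: $D$ applied to $g_r h$ produces the term $(\pi^*\sigma)(\partial_\vartheta g_r)\,h$ from \eqref{D.decomp}, so $w(h)_v\notin\ker\D_v^{ex}(\infty)$, and there is no evident way to correct it at fixed finite $r$ (dropping the modes of $h$ that grow toward the vertex, the other natural attempt, destroys smoothness/harmonicity on the overlap). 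The paper never constructs such a map: surjectivity is obtained indirectly, via \thref{splicing.is.epi}, which combines Kato's gap ($\delta(A,B)<1\Rightarrow\dim A\le\dim B$) with the compactness statement \thref{local.convergence} — sequences of eigenforms $u_j$ on $X(r_j)$ with $\mu_j r_j\to 0$ and bounded restriction converge, along subsequences and locally on $X^0(s)$, to genuine matching sets. That limit argument is what replaces your ``unsplicing'' map, and it is the missing ingredient in your plan.

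The second problem is item (i). You assert $r^{-(1+\varepsilon)}\le e^{-\lambda_0 r/4}$ for large $r$; the inequality goes the other way, so ruling out non-zero eigenvalues below $e^{-\lambda_0 r/4}$ is strictly weaker than the stated claim, not a ``cleaner-to-state consequence.'' Moreover, your squeeze only controls $\ker\D(r)$: surjectivity of $\Pi_{\ker}\circ\mathcal{S}_r$ gives $\dim\ker\D(r)\le\dim\mathcal{W}$ but gives no upper bound on $\dim\spa(r,e^{-\lambda_0 r/4})$, let alone on $\dim E(r)=\dim\spa(r,r^{-(1+\varepsilon)})$, because eigenforms with small \emph{non-zero} eigenvalues are never touched by your construction (their cylinder expansions involve the $\cos(\mu t)$, $\sin(\mu t)$ modes of \thref{low.mode.expansion}, which is precisely why the condition $\mu r\to 0$, i.e.\ the window $r^{-(1+\varepsilon)}$, appears). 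The paper's route handles this automatically: the gap argument of \thref{splicing.is.epi} is applied to $E(r)$ itself, \thref{local.convergence} needing only $|\mu_j|<\lambda_0$ and $\mu_j r_j\to 0$, which yields $\dim E(r)\le\dim\mathcal{W}\le\dim\ker D(r)\le\dim E(r)$ and hence item (i) with the polynomial window; only then do $\Pi_r$ and $\Pi_{\ker}$ coincide, giving (ii) and (iii) from \thref{splicing.is.mono}. As written, your argument establishes neither the absence of small non-zero eigenvalues nor, without item (i), the injectivity of $\Pi_{\ker}\circ\mathcal{S}_r$ that you describe as ``inherited.''
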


\begin{proof} The most difficult part of this proof is showing that splicing defines a surjective map as claimed in \emph{ii)}. The convergence results of section \ref{sec:convergence} will be crucial for this, and the proof of surjectivity is postponed until section \ref{sec:surjective}.

\thref{splicing.is.epi} shows that for $r$ sufficiently large, the number $h(r)$ of non-zero eigenvalues (counted with multiplicity) $\mu$ of $D(r)$ satisfying $|\mu| < r^{-(1+\varepsilon)}$ is less than or equal to $\dim\mathcal{W}$ whereas Propositions \ref{matching.vs.coho.1} and \ref{matching.vs.coho.2} have shown that $\dim\mathcal{W} \le \dim\ker D(r)$. Since clearly $\dim\ker D(r) \le h(r)$, this establishes item \emph{i)}. But then, the projections $\Pi_{\ker}$ and $\Pi_r$ coincide, and item \emph{iii)} follows from \thref{splicing.is.mono}. Moreover, this shows that
  \[ \Pi_{\ker} \circ \mathcal{S}_r \,:\, \mathcal{W} \longrightarrow \ker D(r) = \ker \D(r) \]
is an isomorphism. This also implies that the isomorphism $\hat{\imath}_1^\sharp : \coker \rho^\sharp \longrightarrow H^1(\mathcal{X})$ of \eqref{ihat.isos} induces an isomorphism
  \[ \hat{\imath} : \mathcal{L}^r \longrightarrow H^1(\mathcal{X}) \quad,\quad u \longmapsto \big[B^r(u)\big] \;. \]
Along with the results of section \ref{sec:match.ext}, we obtain the diagram of item \emph{iv)} and the proof of the main result is reduced to the proof of \thref{splicing.is.epi}.
\qed\end{proof}

\section{Convergence Results}\label{sec:convergence}

The main step in showing that projected splicing defines an isomorphism is to show that this map is onto. But before we may address this, we need to collect some convergence results taken from \cite{clm1}. In the following we will always denote eigenvalues of $\widehat{D}$ by $\lambda$ and those of $\D(r)$ by $\mu$.

\begin{prop}[Cappell-Lee-Miller]\label{low.mode.expansion} For $|\mu| < \lambda_0$, an eigenform $u$ on the cylindrical manifold $Z(r+1)$ corresponding to the eigenvalue $\mu$ can be written as a sum $u = u_0 + u_+ + u_-$ where the summands have the following representations:
 \[ u_0(t,\cdot\,) = \sum_{1\le k\le N}
           a_k\big(\cos(\mu t) \pi^*\phi_k + \sin(\mu t) \pi^*\phi_{-k}\big)
           + b_k\big(\cos(\mu t) \pi^*\phi_{-k} - \sin(\mu t) \pi^*\phi_k\big) \;, \]
with constants $a_k$ and $b_k$,
 \[ u_+(t,\cdot\,) = \sum_{k>N} c_k^+ e^{\alpha_k t} \pi^*\phi_k^+ \qquad\text{and}\qquad
    u_-(t,\cdot\,) = \sum_{k>N} c_k^- e^{-\alpha_k t} \pi^*\phi_k^- \;, \]
with constants $c_k^+$, $c_k^-$ and $\alpha_k = \sqrt{\lambda_k^2 - \mu^2}$ depending on $r$ and
 \[ \phi_k^+ = (\alpha_k + \lambda_k)\phi_k + \mu \phi_{-k} \qquad\text{and}\qquad
    \phi_k^- = \mu \phi_k + (\alpha_k + \lambda_k)\phi_{-k} \;. \]

Further, every form of this type is a solution to $D(r+1)u = \mu u$ on $Z(r+1)$. If $\mu = 0$, we have $\phi_k^{\pm} = 2\lambda_k \phi_{\pm k}$.
\end{prop}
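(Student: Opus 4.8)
The plan is to verify Proposition~\ref{low.mode.expansion} by a direct analysis of the ODE system obtained from the decomposition \eqref{D.decomp} of the Gauss-Bonnet operator on the cylinder. First I would expand an eigenform $u$ of $D(r+1)$ on $Z(r+1)$ in the orthonormal basis $\{\phi_\lambda\}$ of eigenforms of $\widehat{D}$, writing $u = \sum_\lambda u_\lambda(t)\,\pi^*\phi_\lambda$ as in \eqref{forms.expand}. Using $D = (\pi^*\sigma)(\partial_t - \pi^*\widehat{D}_e)$ and $\sigma\widehat{D}_e = -\widehat{D}_e\sigma$, the equation $Du = \mu u$ becomes, after applying $-\pi^*\sigma$ and separating into the two-dimensional $\sigma$-invariant blocks $\{\phi_k,\phi_{-k}\}$, a system of first-order ODEs in $t$ coupling $u_k$ and $u_{-k}$ via the skew factor $\sigma$ (which acts as a rotation-type matrix) and the scalar $\lambda_k$. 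Writing this out, each block yields a constant-coefficient linear system whose coefficient matrix has characteristic polynomial with roots $\pm\sqrt{\lambda_k^2-\mu^2}$ when $|\mu|<|\lambda_k|$ (the non-zero modes) and $\pm i\mu$ when $\lambda_k = 0$ (the kernel modes, $0<|k|\le N$).

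Next I would solve each block explicitly. For $\lambda_k = 0$ the coefficient matrix has purely imaginary eigenvalues $\pm i\mu$, so solutions are combinations of $\cos(\mu t)$ and $\sin(\mu t)$; choosing the two constants $a_k,b_k$ to parametrise the two-dimensional solution space and sorting the $\cos/\sin$ terms into the $\phi_k$, $\phi_{-k}$ components gives exactly the stated formula for $u_0$. For $\lambda_k \neq 0$ with $|\mu|<\lambda_0\le|\lambda_k|$, set $\alpha_k = \sqrt{\lambda_k^2-\mu^2}>0$; the eigenvalues of the block are $\pm\alpha_k$, and a direct computation of the eigenvectors of the $2\times 2$ coefficient matrix produces precisely $\phi_k^+ = (\alpha_k+\lambda_k)\phi_k + \mu\phi_{-k}$ for the eigenvalue giving growth $e^{\alpha_k t}$ and $\phi_k^- = \mu\phi_k + (\alpha_k+\lambda_k)\phi_{-k}$ for the decaying branch $e^{-\alpha_k t}$. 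Collecting the growing parts over all $k>N$ gives $u_+$, the decaying parts give $u_-$, and the kernel block gives $u_0$; this establishes that every eigenform has the claimed representation. The converse — that every form of this shape solves $D(r+1)u=\mu u$ on $Z(r+1)$ — is then immediate by reversing the computation, since each summand was constructed to lie in the solution space of the corresponding block. Finally, setting $\mu=0$ forces $\alpha_k = |\lambda_k| = \lambda_k$ (after the sign conventions of \thref{props.of.sigma.D.hat}, or by treating $\pm k$ symmetrically), whence $\phi_k^\pm = 2\lambda_k\phi_{\pm k}$.

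I expect the only genuinely delicate point to be bookkeeping: getting the signs in the $\sigma$-action right (recall $\sigma$ involves $(-1)^{\deg_Y}$ and $\sigma^2 = -\mathrm{id}$), so that the $2\times 2$ block is written correctly and its eigenvectors come out in exactly the normalisation stated. One must also be slightly careful that the index set is split consistently — $0<|k|\le N$ for the kernel of $\widehat{D}$ versus $|k|>N$ for the non-zero spectrum, matching the ordering fixed earlier — and that the pairing $k \leftrightarrow -k$ is compatible with $-\lambda_k = \lambda_{-k}$, so that no spurious factors appear when $u_k$ and $u_{-k}$ are recombined. Convergence of the series (so that the formal solution is an honest $L^2_{loc}$ eigenform) is standard elliptic regularity on the compact cylinder $Z(r+1)$ and need not be belaboured. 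Beyond this the argument is a routine, if somewhat lengthy, explicit integration of a decoupled constant-coefficient linear ODE system.
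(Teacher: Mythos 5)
Your argument is correct and is essentially the intended one: the paper states this proposition without proof (it is quoted from Cappell--Lee--Miller), and the standard proof is exactly the separation-of-variables computation you describe --- rewrite $Du=\mu u$ on the cylinder as $\partial_t u = \widehat{D}u - \mu\,\sigma u$ using $\sigma^2=-\id$, decompose into the $\sigma$-invariant blocks spanned by $\phi_k,\phi_{-k}$, and solve the resulting $2\times 2$ constant-coefficient systems, whose eigenvalues are $\pm\alpha_k$ for $k>N$ (giving the eigenvectors $\phi_k^\pm$ in the stated normalisation) and $\pm i\mu$ on $\ker\widehat{D}$ (giving the rotational solutions $u_0$). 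The one convention worth making explicit, which you already flag as bookkeeping, is that the formula for $u_0$ presupposes the basis of $\ker\widehat{D}_e$ is chosen $\sigma$-adapted (i.e.\ $\phi_{-k}$ proportional to $\sigma\phi_k$), so that $e^{-\mu t\,\sigma}$ acts on each kernel block as the stated rotation.
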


Using these explicit representations of eigenforms, we are able to show a slightly adapted version of a result from Cappell, Lee and Miller (cf. \cite[6.2]{clm1}). In order to formulate it, the following definition will help.

\begin{define} For any $s>0$, we denote by $\mathcal{R}_s$ the map which restricts forms to the disjoint union of manifolds with boundary $X^0(s)$. For instance, for $r>s$,
  \[\begin{array}{ccccl}
   \mathcal{R}_s &: &L^2\big(F(r)\big) &\longrightarrow &L^2\big( F^0(s)\big) \qquad\text{or} \\[4pt]
   \mathcal{R}_s &: &L^2_{ex}\big(F(\infty)\big) &\longrightarrow &L^2\big( F^0(s) \big) \;.
  \end{array}\]
\end{define}

Let us remark that due to the representations of \thref{exts.and.kers} and \thref{low.mode.expansion}, $\mathcal{R}_s$ is injective when acting on $\mathcal{W}$ respectively on the span of eigenforms of $\D(r)$ corresponding to \textbf{small eigenvalues} (i.e. those tending to $0$ faster than $r^{-1}$).

\begin{thm}\label{local.convergence} Let $r_j$, $j \in \N$, be such that $\lim r_j = \infty$ and $u_j$ be a sequence of smooth forms on $X(r_j)$ satisfying:
 \begin{enumerate}
  \item $D(r_j)u_j = \mu_j u_j$
  \item $|\mu_j| < \lambda_0$ and $\lim \mu_j r_j = 0$
  \item $\mathcal{R}_0 u_j$ is a bounded sequence in $L^2\big(F^0(0)\big)$
 \end{enumerate}
Then, there is a subsequence $(u_{j^\prime})$ of $(u_j)$ and a matching set $w \in \mathcal{W}$ such that for any $s>0$
 \[ \mathcal{R}_s u_{j^\prime}\longrightarrow\mathcal{R}_s w \qquad\text{in}\qquad L^2\big(F^0(s)\big) \;.\]
\end{thm}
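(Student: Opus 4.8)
The plan is a standard elliptic-compactness argument followed by an identification of the limit. First I would establish uniform local elliptic bounds: from hypothesis \emph{iii)} together with $D(r_j)u_j = \mu_j u_j$, $|\mu_j| < \lambda_0$, and the interior and boundary-free elliptic estimates for the Gauss-Bonnet operator on the fixed compact manifolds-with-boundary sitting inside $X^0(s)$, one gets that $\mathcal{R}_{s} u_j$ is bounded in $H^1(F^0(s))$ for every fixed $s$; this uses that the cylindrical collars let us propagate the $L^2$ bound from $X^0(0)$ outward via the explicit eigenform expansion of \thref{low.mode.expansion}, so that in fact $\|\mathcal{R}_s u_j\|_{L^2}$ is bounded uniformly in $j$ for each fixed $s$ (the low modes are bounded by the $a_k,b_k$, and the high modes $u_\pm$ decay). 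Bootstrapping, $\mathcal{R}_s u_j$ is bounded in $H^k(F^0(s))$ for all $k$. By Rellich and a diagonal argument over an exhausting sequence of radii $s_\ell \to \infty$, extract a subsequence $(u_{j'})$ so that $\mathcal{R}_{s} u_{j'}$ converges in $L^2(F^0(s))$ — indeed in every $H^k$, hence in $C^\infty_{loc}$ — to some limit which patches together to a smooth form $w$ on $X(\infty)$.

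Next I would identify $w$ as an extended harmonic form on each star. Passing to the limit in $D(r_{j'})u_{j'} = \mu_{j'} u_{j'}$ and using $\mu_{j'} \to 0$ gives $D_v(\infty) w_v = 0$ on each $X_v(\infty)$ in the sense of distributions, hence classically by ellipticity. The key point is the \emph{extended} $L^2$ control: I would use the eigenform representation of \thref{low.mode.expansion} on the collar $Z(r_{j'}+1)$, where $u_{j'} = u_0 + u_+ + u_-$, and the condition $\mu_{j'} r_{j'} \to 0$ to see that on any fixed cylinder length $[-s,s]$ the oscillatory factors $\cos(\mu_{j'} t), \sin(\mu_{j'} t) \to 1, 0$, while the $\alpha_k = \sqrt{\lambda_k^2 - \mu_{j'}^2} \to \lambda_k$, so the limit $w_v$ restricted to $Z_v(\infty)$ has exactly the form $\sum_{\bullet\lambda\ge 0} w_\lambda(0)e^{\lambda t}\pi^*\phi_\lambda$, i.e. a genuine extended $L^2$ solution with a well-defined limiting value coming from the $\lambda = 0$ part. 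This places $w_v \in \ker\D_v^{ex}(\infty)$ by \thref{exts.and.kers} and \thref{def:ext.gb}. One also needs that no $L^2$ mass escapes to infinity in a way that would make $w$ trivial or ill-defined; the uniform bound on the low-mode coefficients, which are what survive in the $C^\infty_{loc}$ limit, handles this, since the high modes contribute a controlled exponentially small tail independent of $j'$.

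Finally I would check the matching condition, i.e. $w \in \mathcal{W} = \ker\rho_{ex}$. For each edge $e = (v,v')$ and each $j'$, the form $u_{j'}$ is a single smooth form on $X(r_{j'})$, so on the embedded cylinder $Z_e(r_{j'})$ its expansion coefficients are globally defined; evaluating the $\lambda = 0$ parts from the $v$-side and the $v'$-side gives the \emph{same} functions on $Y_e$, because they are restrictions of one form. Passing to the limit, $B_{(v,e)}(w_v) = B_{(v',e)}(w_{v'})$, which is precisely $\rho_{ex}(w) = 0$. Hence $w \in \mathcal{W}$, and by construction $\mathcal{R}_s u_{j'} \to \mathcal{R}_s w$ in $L^2(F^0(s))$ for every $s>0$.

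\textbf{Main obstacle.} The delicate step is the second one: getting the right \emph{extended} $L^2$ structure of the limit uniformly on longer and longer collars. One must carefully exploit $\mu_{j'} r_{j'} \to 0$ (not merely $\mu_{j'}\to 0$) so that the low-mode oscillation is negligible on any fixed window, and simultaneously control the high modes $u_\pm$ with a $j'$-independent exponential bound — so that the weak limit genuinely lies in $\ker\D^{ex}(\infty)$ and its limiting value is the honest asymptote, rather than an artifact of truncation. The bookkeeping that the limiting values on the two half-edges of each edge agree is then essentially automatic from the fact that each $u_{j'}$ is a single globally defined form.
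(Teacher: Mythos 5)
Your overall strategy is the same as the paper's: elliptic compactness on the fixed compact pieces, the explicit mode expansion of \thref{low.mode.expansion} on the long cylinders, and the hypothesis $\mu_j r_j \to 0$ (not just $\mu_j \to 0$) to make the low-mode rotation negligible across the \emph{entire} cylinder, which is what forces the limiting values seen from the two ends of each edge to agree and hence puts the limit in $\mathcal{W}$. The paper quotes \cite[6.1--6.2]{clm1} for the convergence on $X^0(0)$ together with the existence of the matching limit $w$, and then extends the convergence to $X^0(s)$ by estimating the constant, increasing and decreasing parts separately; you instead rerun the compactness directly and use a diagonal argument over radii. That reorganisation is fine in principle.

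The one step whose stated justification would fail is the propagation of the uniform bound from $X^0(0)$ to $X^0(s)$: it is not true that ``the high modes $u_\pm$ decay''. On each cylinder, viewed from a given end, one of the two families grows like $e^{\alpha_k\vartheta}$ as one moves into the cylinder, so a bound on the trace at that end alone does not bound $\|\mathcal{R}_s u_j\|_{L^2}$ -- the factor $e^{2\alpha_k s}$ is unbounded in $k$. The correct mechanism, which you flag as the ``main obstacle'' but never actually supply, is that the coefficient of the mode growing away from the $v$-end is the coefficient of the mode decaying away from the \emph{opposite} end, and its size is fixed by the bounded trace there; since the two ends are a distance $2r_j$ apart, this coefficient is of order $e^{-2\alpha_k r_j}$, giving an exponentially small, $j$-independent contribution on any fixed window. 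This is precisely the paper's use of the estimate \cite[5.4]{clm1} and of the normalisation observation \eqref{coeffs.order} for $c_k^\pm(j)$, and it is also what guarantees that no growing modes survive in the limit, so that $w_v$ really lies in $\ker\D_v^{ex}(\infty)$. In addition, the mode-by-mode bounds must be summed over $k$: the paper invokes the eigenvalue asymptotics of \thref{props.of.sigma.D.hat} for absolute convergence, while in your set-up summability would come from trace bounds in $H^l$ with $l$ large. With these two points made precise your argument closes and is essentially the paper's proof.
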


\begin{proof}First of all, as already shown in \cite[6.1-2]{clm1}, there is a subsequence $(u_{j^\prime})$ so that
  \[ u_{j^\prime}\big|_{X^0(0)} \longrightarrow w\big|_{X^0(0)} \quad\text{in}\quad H^l\big(F^0(0)\big) \]
for all $l \in \N$. Since restriction to the boundary is continuous as a mapping $H^l \rightarrow H^{l-\frac{1}{2}}$, this gives convergence
  \[ u_{j^\prime}\big|_{\partial X^0(0)} \longrightarrow w\big|_{\partial X^0(0)}
     \quad\text{in}\quad H^l(\widehat{F}) \]
and thus convergence of coefficient functions
  \[\begin{array}{ccl}
     c_k^\pm (j^\prime) \longrightarrow \frac{f_{\pm k}}{2 \lambda_k} &\text{for} &k > N \quad\text{and} \\[3pt]
     a_k(j^\prime) \rightarrow d_k \quad\text{,}\quad b_k(j^\prime) \rightarrow d_{-k} &\text{for}
      &1 \le k \le N \;,
  \end{array} \]
where, with regard to \thref{low.mode.expansion}, $d_k$, $d_{-k}$ denote the coefficients of $w_0$ and $f_{\pm k}$ those of $w_{\pm}$. (To be precise, we ought to equip the coefficient functions with an additional subscript indicating the edge, for simplicity this is omitted. Besides, depending on the orientation of the edge, either $f_k$ or $f_{-k}$ vanishes on that edge.) Using this, direct calculations show that on cylindrical parts for $r_{j^\prime} > s$ we have
  \begin{equation}\label{convergence.of.modes}\begin{array}{ccl}
    \big\| (u_{j^\prime})_0 - w_0 \big|_{Z^0(s)} \big\|_{L^2} &\longrightarrow &0 \quad\text{and} \\[5pt]
    \big\| (u_{j^\prime})_\pm - w_\pm \big|_{Z^0(s)} \big\|_{L^2} &\longrightarrow &0
      \quad\text{as}\quad j^\prime \rightarrow \infty \;,
  \end{array} \end{equation}
where $Z^0(s)$ denotes the disjoint union of cylinders $Z_e(s)$, $e \in E$. We will shortly sketch the calculations involved in showing \eqref{convergence.of.modes}.

Consider a cylinder $Z_{(v,e)}(r_{j^\prime})$ whose boundary is given by $t=-r_{j^\prime}$ and $t=0$. The case of the opposite orientation is treated in the same way. Then, $w_+=0$ on this cylinder because an extended harmonic form possesses no exponentially increasing part. Thus, using an estimate shown by Cappell, Lee and Miller (see \cite[5.4]{clm1}), we obtain
  \[ \big\| (u_{j^\prime})_+ - w_+ \big|_{Z_{(v,e)}(s)} \big\|_{L^2}^2
     <  -\frac{1}{\alpha} \big( 1 - e^{2\alpha s} \big) e^{-4\alpha r_{j^\prime}} \;, \]
which tends to $0$ as $j^\prime \rightarrow \infty$. Concerning the constant part $(u_{j^\prime})_0$ we have
  \[ \big\| (u_{j^\prime})_0 - w_0 \big|_{Z_{(v,e)}(s)} \big\|_{L^2}^2
     \le \,s \sum_{1 \le k \le N} \big( a_k(j^\prime) - d_k \big)^2 + \big( b_k(j^\prime) - d_{-k} \big)^2 \;. \]
Since $ a_k(j^\prime) \rightarrow d_k$ and $b_k(j^\prime) \rightarrow d_{-k}$, this term vanishes as well. As for the decreasing part, using the coordinate $\vartheta$ (defined at the beginning of this section) and \thref{low.mode.expansion}, we may write
  \begin{equation*}\begin{array}{rcll}
   \vartheta^*u_j\big|_{Z^0(s)}(\vartheta) &= &\vartheta^*(u_j)_0(\vartheta) \;+ &\hspace{-5pt}\sum\limits_{k>N}
        \big(c_k^+(j)e^{-\alpha_k(j)r_j}\big)\,e^{\alpha_k(j)\vartheta} \pi^*\phi_k^+ \\
        & & &\quad + \big(c_k^-(j)e^{\alpha_k(j)r_j}\big)\,e^{-\alpha_k(j)\vartheta}
             \pi^*\phi_k^- \;,\\[8pt]
   \vartheta^*w\big|_{Z^0(s)}(\vartheta) &= &\vartheta^*w_0(\vartheta) \;+
        &\hspace{-5pt}\sum\limits_{k > N} \big( f_k e^{-\lambda_k r_j}\big)\,
        e^{\lambda_k \vartheta} \pi^*\phi_k^\bullet \;,
  \end{array} \end{equation*}
with $\alpha_k(j)$ denoting $\alpha_k(r_j)$. Since the $L^2$ norms on the vertex manifolds of the $u_j$ are bounded in $r_j$, this shows that the coefficients satisfy
  \begin{equation}\label{coeffs.order}
   c_k^\pm(j) \in O(e^{\pm\bullet\alpha_k(j)r_j}) \quad\text{for}\quad j \rightarrow \infty \;.
  \end{equation}
This might seem surprising but we defined the coefficient functions using coordinates with respect to which the boundary of a vertex manifold is given by $t=\pm r_j$. Then, \eqref{coeffs.order} only states that we are in fact dealing with a bounded sequence. Now,
\[ \begin{split}
  \| \big( &(u_j)_- - w_- \big)\big|_{Z_{(v,e)}(s)} \|_{L^2}^2 \\
     &\le \sum_{k > N} \Big(c_k^-(j^\prime) e^{\alpha_k(j^\prime)r_{j^\prime}}
        - \frac{f_{-k}}{2\lambda_k} e^{\lambda_k r_{j^\prime}} \Big)^2 - \Big( c_k^-(j^\prime)
        - \frac{f_{-k}}{2\lambda_k} \Big)^2
     + \epsilon_k^-(j^\prime)c_k^-(j^\prime)^2 e^{2\alpha_k(j^\prime)r_{j^\prime}} \;,
  \end{split} \]
where $\epsilon_k^\pm(j^\prime) = \frac{\lambda_k \pm \alpha_k(j^\prime)}{2\alpha_k(j^\prime)}$. Since $\alpha_k(j^\prime) \rightarrow \lambda_k$ and because of the asymptotics of \thref{props.of.sigma.D.hat}, this sum converges absolutely. What is more, since $c_k^-(j^\prime) \rightarrow \frac{f_{-k}}{2\lambda_k}$, $\epsilon_k^-(j^\prime) \rightarrow 0$ for $j^\prime \rightarrow \infty$ and because $c_k^-(j^\prime)^2 e^{2\alpha_k(j^\prime)r_{j^\prime}}$ is bounded with respect to $j^\prime$, we may conclude convergence of the exponentially decreasing parts.

Since the subsequence $(u_{j^\prime})$ and the matching set $w$ were chosen independently of $s$, this completes the proof.
\qed\end{proof}

\section{Surjectivity of Projected Splicing}\label{sec:surjective}

From now on, we assume $r > \lambda_0^{-\frac{1}{1+\varepsilon}}$ since then $|\mu| < r^{-(1 + \varepsilon)}$ implies $|\mu| < \lambda_0$, hence \thref{local.convergence} is applicable to sequences in $E(r) := \spa \big(r,r^{-(1+\varepsilon)}\big)$ respectively $\mathcal{E}_s(r) := \mathcal{R}_s E(r)$. This last theorem completes the proof of \scshape\thref{main.exact}.

\begin{thm}\label{splicing.is.epi} There exists $r_0 > 0$ such that for any $r>r_0$ we have $\dim E(r) \le \dim \mathcal{W}$.
\end{thm}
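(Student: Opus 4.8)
The plan is to prove $\dim E(r) \le \dim \mathcal{W}$ by a contradiction argument à la Cappell-Lee-Miller (and Nicolaescu): if the inequality failed for a sequence $r_j \to \infty$, I would extract from $E(r_j)$ an orthonormal-in-$L^2(F^0(0))$ system of size $\dim\mathcal{W}+1$, pass to the limit using \thref{local.convergence}, and obtain $\dim\mathcal{W}+1$ linearly independent elements of $\mathcal{W}$, which is absurd. The two things that need care are (a) that the limit map $E(r_j) \supset \{u\} \mapsto w \in \mathcal{W}$ is well-defined and injective, which is exactly where the injectivity of $\mathcal{R}_s$ on $\mathcal{W}$ and on the span of small eigenforms (remarked after the definition of $\mathcal{R}_s$) is used, and (b) that normalization survives the limit, i.e. the limit is nonzero.

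First I would suppose, for contradiction, that the assertion is false: there is a sequence $r_j \to \infty$ with $m := \dim E(r_j) \ge \dim\mathcal{W}+1$ for all $j$ (after passing to a subsequence so that $\dim E(r_j)$ is constant; it is bounded since one can bound the number of small eigenvalues, but in fact we don't even need the bound — we only need $\ge \dim\mathcal{W}+1$). Fix $k = \dim\mathcal{W}+1$. For each $j$ choose eigenforms $u_1^j,\dots,u_k^j \in E(r_j)$ of $\D(r_j)$ with eigenvalues $\mu_i^j$, $|\mu_i^j| < r_j^{-(1+\varepsilon)}$, that are orthonormal in $L^2(F^0(0))$; this is possible because $\mathcal{R}_0$ is injective on $E(r_j)$, so the restrictions stay linearly independent and a Gram-Schmidt process in $L^2(F^0(0))$ applies (and, being finite linear combinations of small eigenforms with bounded $L^2(F(r_j))$-norm, each $u_i^j$ still solves an eigenvalue equation only after re-diagonalizing — alternatively work with an $L^2(F^0(0))$-orthonormal basis of $\mathcal{E}_0(r_j)$ directly and note each basis vector is a superposition of eigenforms, to which \thref{local.convergence} still applies after a further diagonal extraction).

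Next, hypothesis (iii) of \thref{local.convergence} holds by construction ($\|\mathcal{R}_0 u_i^j\|=1$), and (i), (ii) hold since $u_i^j$ is an eigenform with $|\mu_i^j|<\lambda_0$ and $\mu_i^j r_j \to 0$ (as $|\mu_i^j| r_j < r_j^{-\varepsilon} \to 0$). So, for each $i$ in turn, a diagonal subsequence argument yields a single subsequence $(j')$ and matching sets $w_1,\dots,w_k \in \mathcal{W}$ with $\mathcal{R}_s u_i^{j'} \to \mathcal{R}_s w_i$ in $L^2(F^0(s))$ for every $s>0$ and every $i$. Taking $s$ fixed (say $s$ such that $X^0(0) \subset X^0(s)$, e.g. $s=1$), orthonormality of $\mathcal{R}_0 u_i^{j'}$ in $L^2(F^0(0))$ passes to the limit: $\langle \mathcal{R}_0 w_i, \mathcal{R}_0 w_l\rangle = \delta_{il}$. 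In particular each $w_i \ne 0$, and the $\mathcal{R}_0 w_i$ are linearly independent in $L^2(F^0(0))$; since $\mathcal{R}_0$ is injective on $\mathcal{W}$ (noted after the definition of $\mathcal{R}_s$, via \thref{exts.and.kers} and \thref{low.mode.expansion}), the $w_i$ are linearly independent in $\mathcal{W}$. That produces $k = \dim\mathcal{W}+1$ linearly independent elements of $\mathcal{W}$ — a contradiction. Hence for $r$ large enough, $\dim E(r) \le \dim\mathcal{W}$.

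The main obstacle I expect is the bookkeeping around step (a): one must be sure that the $L^2(F^0(0))$-orthonormalization can be arranged \emph{and} that \thref{local.convergence} — which is stated for eigenforms — still yields limits for these (a priori non-eigen) linear combinations. The clean fix is to apply \thref{local.convergence} to an actual $L^2(F(r_j))$-orthonormal system of eigenforms spanning $E(r_j)$ (rescaled so that the $L^2(F^0(0))$-part is controlled), obtain limit matching sets, and then note that $\mathcal{R}_0$ restricted to $E(r_j)$ is not only injective but, by the uniform exponential-decay estimates behind \thref{low.mode.expansion} and the convergence \eqref{convergence.of.modes}, eventually uniformly bounded below in the sense $\|\mathcal{R}_0 u\|_{L^2} \ge c\|u\|_{L^2(F(r_j))}$ for all $u \in E(r_j)$, with $c>0$ independent of $j$; this quantitative statement is what turns ``$\mathcal{R}_0$ injective'' into ``$\dim \mathcal{E}_0(r_j) = \dim E(r_j)$ and limits stay independent'', and it is the technical heart of the argument. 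Everything else is routine: the hypotheses of \thref{local.convergence} are immediate, and the limit of an orthonormal system is orthonormal hence independent.
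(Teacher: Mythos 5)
There is a genuine gap, and it sits exactly where you yourself locate the ``technical heart''. Your plan needs to extract limits, via \thref{local.convergence}, of vectors $u_i^j\in E(r_j)$ that have been Gram--Schmidt orthonormalised in $L^2\big(F^0(0)\big)$; but these are no longer eigenforms, so the theorem only applies to their eigencomponents, and to pass the convergence through the linear combination you must know that the combination coefficients (equivalently, the norms $\|u_i^j\|_{L^2(F(r_j))}$) stay bounded in $j$. The fix you propose -- a uniform lower bound $\|\mathcal{R}_0 u\|_{L^2}\ge c\,\|u\|_{L^2(F(r_j))}$ on $E(r_j)$ with $c$ independent of $j$ -- is false. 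Any small eigenform whose constant part in the expansion of \thref{low.mode.expansion} is nonzero has squared $L^2(F(r))$-norm growing like $r$ (the mode $u_0$ contributes roughly the cylinder length times its boundary norm), while its restriction to $X^0(0)$ stays bounded; concretely, for a matching set $w\in\mathcal{W}$ with nonzero limiting value, $\|\mathcal{S}_r(w)\|_{L^2}^2\sim 2r\,\|\widehat w\|^2$ and $\Pi_{\ker}\mathcal{S}_r(w)$ is exponentially close to it, so kernel elements themselves violate your bound for large $r$; the best one can hope for is $c\sim r^{-1/2}$, which does not control the Gram--Schmidt coefficients. Without that control you cannot rule out that the restrictions of an eigenbasis of $E^m(r_j)$ collapse (become asymptotically dependent in $L^2(F^0(0))$), in which case your orthonormalised vectors need not converge at all, and the claim ``the limits are orthonormal, hence $\dim\mathcal{W}+1$ independent elements of $\mathcal{W}$'' is not reached.

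This is precisely the difficulty the paper's proof is built to avoid: instead of producing a basis whose limits remain independent, it introduces Kato's gap $\delta(A,B)=\sup\{\dist(a,B):a\in A,\ \|a\|=1\}$ and uses the purely linear-algebraic fact that $\delta(A,B)<1$ forces $\dim A\le\dim B$. Step 1 there only requires that every $L^2\big(F^0(s)\big)$-unit vector of $\mathcal{E}^m_s(r_j)$ become arbitrarily close to the fixed finite-dimensional space $\mathcal{R}_s\mathcal{W}$, and step 2 combines $\delta<1$ with the injectivity of $\mathcal{R}_s$ on $E^m(r_j)$ and on $\mathcal{W}$ to get $m\le\dim\mathcal{W}$. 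So your overall skeleton (contradiction, local convergence, injectivity of $\mathcal{R}_s$) matches the paper, but the replacement of the gap argument by ``orthonormalise and pass to the limit'' is where the proof breaks; to repair it you would either have to prove a genuine non-collapse statement for restrictions of small eigenforms (which your proposed inequality is not), or revert to a gap-type argument that converts local approximation into the dimension count without tracking independence of limits.
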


\begin{proof}We will use a distance argument involving \textbf{Kato's gap} (already used by Cappell, Lee and Miller \cite{clm1} and Nicolaescu \cite{nicolaescu02}). By definition, for two closed, non-trivial subspaces $A$, $B$ of a Banach space $C$, the gap between $A$ and $B$ equals
  \begin{equation*}
    \delta(A,B) = \sup\big\{\, \dist(a,B) \;\big|\; a \in A \;,\; \|a\|_C = 1 \, \big\} \;,
  \end{equation*}
and it is shown that $\delta(A,B)<1$ implies $\dim A \le \dim B$ (see \cite[IV.\S 2]{kato84}, for instance).

For any integer $m>0$ let $\mu_r(1),\dotsc,\mu_r(m)$ denote the first $m$ eigenvalues of $\D(r)$ sorted by increasing absolute value and counted with multiplicity. Then, let $E^m(r)$ be the span of the corresponding eigenforms and $\mathcal{E}_s^m(r) = \mathcal{R}_s E^m(r)$. The proof will be carried out in two steps.

\bigskip\textbf{step 1.} Let $m\in\N$, $s\in\R_+$ and suppose for a sequence $(\tilde{r}_j) \subset \R_+$
  \begin{equation}\label{gap.contra1}
    E^m(\tilde{r}_j) \subset E(\tilde{r}_j) \qquad\text{for all}\quad j \in \N \quad\text{and}\quad
    \lim_{j\rightarrow\infty} \tilde{r}_j = \infty \;.
  \end{equation}
We will show that the gap $\delta\big(\mathcal{E}_s^m(\tilde{r}_j),\mathcal{R}_s\mathcal{W}\big)$ vanishes as $j\rightarrow\infty$. Assume this is wrong, i.e. there is a subsequence $(r_j) \subset (\tilde{r}_j)$ and a constant $c>0$ such that for all $j \in \N$
  \begin{equation}\label{gap.contra2}
    \delta\big(\mathcal{E}_s^m(r_j),\mathcal{R}_s\mathcal{W}\big) = \sup\big\{\,
    \dist( u, \mathcal{R}_s\mathcal{W} ) \;\big|\; u \in \mathcal{E}_s^m(r_j), \|u\|_{L^2(s)}=1 \,\big\} > c \;,
  \end{equation}
where in subscripts we put $L^2(s) = L^2\big(F^0(s)\big)$. Given any sequence $u_j = \mathcal{R}_s \tilde{u}_j$, with $\tilde{u}_j \in E^m(r_j)$ and $\|u_j\|_{L^2(s)}=1$, we may decompose $u_j$ as
  \[ u_j = \sum_{k=1}^m u_j(k) \;, \]
where $u_j(k)=\mathcal{R}_s \tilde{u}_j(k)$ is the restriction of an eigenform $\tilde{u}_j(k)$ of $\D(r)$ with eigenvalue $\mu_{r_j}(k)$. Then, for all $1 \le k \le m$, the sequences $\big(\,\tilde{u}_j(k)\,\big)$ satisfy the assumptions of \thref{local.convergence}. Hence, there are matching sets $w(k)$ so that on suitable subsequences
  \[ \big\| u_j(k) - \mathcal{R}_s w(k) \big\|_{L^2(s)} \longrightarrow 0 \;. \]
Consecutively picking subsequences, we obtain a subsequence $j^\prime$ on which -- for $j^\prime$ sufficiently large:
  \[ \big\| u_{j^\prime} - \mathcal{R}_s w \big\|_{L^2(s)}
     = \big\| \sum_{k=0}^m u_{j^\prime}(k) - \mathcal{R}_s w(k) \big\|_{L^2(s)} \le m \, \delta < c \;, \]
for some $\delta < \frac{c}{m}$. contradicting \eqref{gap.contra2}. Therefore,
  \begin{equation}\label{gap.step1}
   \delta\big( \mathcal{E}^m_s(\tilde{r}_j),\mathcal{R}_s\mathcal{W} \big) \longrightarrow 0 \;,
  \end{equation}
for all sequences $\tilde{r}_j$ satisfying \eqref{gap.contra1}.

\bigskip\textbf{step 2.} Assume the theorem is wrong. Then, there is a sequence $r_j$ with $\lim_j r_j = \infty$ on which $\dim E(r_j) \ge m := \dim\mathcal{W} + 1$. But $\dim E(r_j) \ge m$ implies $E^m(r_j) \subset E(r_j)$, so step 1 and the injectivity of $\mathcal{R}_s$ when acting on $\mathcal{W}$ respectively $E^m(r_j)$ give $m = \dim E^m(r_j) \le \dim\mathcal{W}$, a contradiction. Hence, the theorem is proven.
\qed\end{proof}


\bibliographystyle{myamsalpha}
\bibliography{hodge_coho}

\providecommand{\bysame}{\leavevmode\hbox to3em{\hrulefill}\thinspace}
\providecommand{\MR}{\relax\ifhmode\unskip\space\fi MR }
\providecommand{\MRhref}[2]{%
  \href{http://www.ams.org/mathscinet-getitem?mr=#1}{#2}
}
\providecommand{\href}[2]{#2}
\begin{thebibliography}{HMM95}

\bibitem[APS75]{aps1}
M.~F. Atiyah, V.K. Patodi, and I.M. Singer, \emph{{Spectral Asymmetry and
  Riemannian Geometry I}}, Math. Proc. Camb. Phil. Soc. \textbf{77} (1975),
  43--69.

\bibitem[BT82]{bt82}
R.~Bott and L.~W. Tu, \emph{{Differential Forms in Algebraic Topology}},
  {Graduate Texts in Mathematics 82. Springer, Berlin }, 1982.

\bibitem[CLM96]{clm1}
S.~E. Cappell, R.~Lee, and E.~Y. Miller, \emph{{Self-Adjoint Elliptic Operators
  and Manifold Decompositions I: Low Eigenmodes and Stretching}}, Comm. Pure
  Appl. Math. \textbf{49} (1996), no.~8, 825--866.

\bibitem[Gaf51]{gaffney51}
M.~P. Gaffney, \emph{{The Harmonic Operator for Exterior Differential Forms}},
  Proc. Nat. Acad. Sci. USA \textbf{37} (1951), 48--50.

\bibitem[Gil04]{gilkey04}
P.~B. Gilkey, \emph{{Asymptotic Formulae in Spectral Geometry}}, {Studies in
  Advanced Mathematics. Chapman \& Hall/CRC, Boca Raton, FL }, 2004.

\bibitem[Gri08]{grieser08}
D.~Grieser, \emph{{Spectra of Graph Neighborhoods and Scattering}}, Proc.
  London Math. Soc. \textbf{97} (2008), 718--752.

\bibitem[HMM95]{hmm95}
A.~Hassell, R.~Mazzeo, and R.~B. Melrose, \emph{{Analytic Surgery and the
  Accumulation of Eigenvalues}}, Comm. Anal. Geom. \textbf{3} (1995), no.~1,
  115--222.

\bibitem[Kat84]{kato84}
T.~Kato, \emph{{Perturbation Theory for Linear Operators. 2nd corr. print. of
  the 2nd ed.}}, {Grundlehren der Mathematischen Wissenschaften 132.Springer,
  Berlin }, 1984.

\bibitem[MM90]{mm90}
R.~R. Mazzeo and R.~B. Melrose, \emph{{The Adiabatic Limit, Hodge Cohomology
  and Leray's Spectral Sequence for a Fibration}}, J. Diff. Geom. \textbf{31}
  (1990), no.~1, 185--213.

\bibitem[Nic02]{nicolaescu02}
L.~I. Nicolaescu, \emph{{On the Cappell-Lee-Miller Gluing Theorem}}, Pac. J.
  Math. \textbf{206} (2002), no.~1, 159--185.

\bibitem[PW06]{pw06}
J.~Park and K.~P. Wojciechowski, \emph{{Adiabatic Decomposition of the
  $\zeta$-Determinant and Scattering Theory}}, Mich. Math. J. \textbf{54}
  (2006), no.~1, 207--238.

\bibitem[Shu01]{shubin01}
M.A. Shubin, \emph{{Pseudodifferential Operators and Spectral Theory. 2nd
  ed.}}, {Springer, Berlin }, 2001.

\bibitem[Tay11]{taylor1}
M.~E. Taylor, \emph{{Partial Differential Equations I: Basic Theory. 2nd ed.}},
  {Applied Mathematical Sciences 115. Springer, New York, NY }, 2011.

\end{thebibliography}

\end{document}